\definecolor{darkgreen}{rgb}{0,0.5,0}
\definecolor{darkred}{rgb}{0.5,0,0}
\theoremstyle{plain}
\newtheorem{theorem}{Theorem}
\numberwithin{theorem}{section}
\newtheorem{lemma}[theorem]{Lemma}
\newtheorem{cor}[theorem]{Corollary}
\newtheorem{prop}[theorem]{Proposition}
\newtheorem*{lemma*}{Lemma}
\theoremstyle{definition}
\newtheorem{example}[theorem]{Example}
\newcommand{\C}{\mathbb{C}}
\newcommand{\Z}{\mathbb{Z}}
\newcommand{\Q}{\mathbb{Q}}
\newcommand{\co}[1][K]{\mathcal{O}_{#1}}
\newcommand{\eps}{\varepsilon}
\newcommand{\Aut}{\operatorname{Aut}}
\newcommand{\GL}{\operatorname{GL}}
\newcommand{\Tr}{\operatorname{Tr}}
\newcommand{\Nm}{\operatorname{Nm}}
\newcommand{\cl}{\operatorname{cl}}
\newcommand{\fd}{\mathfrak{d}}
\newcommand{\fg}{\mathfrak{g}}
\newcommand{\fn}{\mathfrak{n}}
\newcommand{\fp}{\mathfrak{p}}
\newcommand{\fs}{\mathfrak{s}}
\newcommand{\fw}{\mathfrak{w}}
\newcommand{\gen}{\operatorname{gen}}
\newcommand{\rank}{\operatorname{rank}}
\newcommand{\ord}{\operatorname{ord}}
\newcommand{\Res}{\operatorname{Res}}
\newcommand{\floor}[1]{\left\lfloor #1 \right\rfloor}
\newcommand{\ceil}[1]{\left\lceil #1 \right\rceil}
\DeclareMathOperator*{\bigperp}{\scalerel*{\mathord{\perp}}{\sum}}
\newcommand{\MRU}[2]{U_{\mathcal{O}_{#1}}({#2})}
\newcommand{\UL}[2]{b_{\mathcal{O}_{#1}}({#2})}
\newcommand{\IUL}[2]{a_{\mathcal{O}_{#1}}({#2})}
\newcommand{\SUL}[2]{X_{\mathcal{O}_{#1}}({#2})}
\newcommand{\CS}[2]{\mathcal{S}_{#1}({#2})}
\newcommand{\mass}[2]{\mathrm{mass}_{{#1}}({#2})}
\newcommand{\massI}[2]{\mathrm{mass}_{{#1}}(I_{#2})}
\newcommand{\card}[1]{\# {#1}}
\newcommand{\norm}[1]{\Nm({#1})}
\newcommand{\bigO}[1]{O({#1})}
\newcommand{\nquadcoeff}{\frac{2 \log \Delta_K - 2d \log(2 \pi) - 3d}{8}}
\newcommand{\nlinearcoeffa}{\frac{d \log{(8 \pi )} + d - \log \Delta_K}{4}}
\newcommand{\nlinearcoeff}{ \nlinearcoeffa - \sum_{\fp | 2} \floor{\frac{e_\fp}{2}} \log \norm{\fp} }
\newcommand{\Autn}{\mathcal{A}_K}
\newcommand{\numquadexceptions}{559}
\begin{document}

\title[]{Asymptotics of $n$-universal lattices over number fields}

\begin{abstract}
We prove an explicit asymptotic formula for the logarithm of the minimal ranks of $n$-universal lattices over the ring of integers of totally real number fields. We also show that, for any constant $C>0$ and $n\geq 3$, there are only finitely many totally real fields with an $n$-universal lattice of rank at most $C$, with all such fields being effectively computable. Similarly, for any $n \geq 3$, we show that there are only finitely many totally real fields admitting an $n$-universal criterion set of size at most $C$, with all such fields likewise being effectively computable.

\end{abstract}

\author{Dayoon Park}
\address{Charles University \\ Faculty of Mathematics and Physics \\ Department of Algebra \\ Sokolovsk\'{a} 83 \\ 186 75 Praha 8 \\ Czech Republic}
\email{pdy1016@snu.ac.kr}

\author{Robin Visser}
\address{Charles University \\ Faculty of Mathematics and Physics \\ Department of Algebra \\ Sokolovsk\'{a} 83 \\ 186 75 Praha 8 \\ Czech Republic}
\email{robin.visser@matfyz.cuni.cz}

\author{Pavlo Yatsyna}
\address{Charles University \\ Faculty of Mathematics and Physics \\ Department of Algebra \\ Sokolovsk\'{a} 83 \\ 186 75 Praha 8 \\ Czech Republic}
\email{p.yatsyna@matfyz.cuni.cz}

\author{Jongheun Yoon}
\address{Charles University \\ Faculty of Mathematics and Physics \\ Department of Algebra \\ Sokolovsk\'{a} 83 \\ 186 75 Praha 8 \\ Czech Republic}
\email{jongheun.yoon@matfyz.cuni.cz}

\date{\today}
\thanks{R. V. and P. Y. were supported by {Charles University} programme PRIMUS/24/SCI/010. D. P. and J. Y. were supported by grant 21-00420M from Czech Science Foundation (GA{\v C}R). D. P., R. V. and P. Y. were supported by {Charles University} programme UNCE/24/SCI/022.}
\keywords{quadratic lattice, $n$-universal lattice, indecomposable lattice}
\subjclass[2020]{11E12, 11R80, 11N45}

\maketitle

\section{Introduction}

The theory of universal quadratic forms, and more generally universal lattices, has a long history and has been extensively studied by many authors.  In 1770, Lagrange famously showed that every positive integer can be written as a sum of four integer squares.  In the modern terminology, we therefore say that the rank four quadratic form $X^2 + Y^2 + Z^2 + W^2$ is a \emph{universal quadratic form} over $\Z$.  Moreover, as there exists no quadratic form in three variables over $\Z$ which represents all positive integers (e.g. see \cite[Theorem~13, p.~291]{Albert}), this implies that the \emph{minimal rank} of a universal quadratic form over $\Z$ is four.

In general, for any number field $K$ with ring of integers $\co$, determining the minimal rank of a universal quadratic form (or a universal lattice) over $\co$ is a particularly hard problem.  In 1941, Maa{\ss} \cite{Maass1941} proved that the ternary quadratic form $X^2 + Y^2 + Z^2$ is universal over $\co[\Q(\sqrt{5})]$, and subsequently over the last few decades, many authors have proven various different bounds on the minimal ranks of universal quadratic forms and universal lattices over $\co$, e.g. see Chan--Kim--Raghavan \cite{CKR1996}, Earnest--Khosravani \cite{EarnestKhosravani1997}, Kim \cite{Kim1999}, as well as the more recent papers of Blomer and Kala \cite{BlomerKala}, Kim, Kim and Park \cite{KKP2022}, Kala, Yatsyna and {\.Z}mija \cite{KYZ}, and Man \cite{Man24}.  A famous open conjecture of Kitaoka from the 1990s asserts that there are only finitely many totally real number fields that admit a ternary universal integral quadratic form (e.g. see \cite[p.~263]{CKR1996} or \cite{Kim97,KalaYatsyna2023,KKPYZ2025,KKK25} for progress on this problem).

\bigskip
In this paper, we study the more general notion of \emph{$n$-universality}. For a fixed totally real number field $K$, we denote by $\MRU{K}{n}$ the \emph{minimal rank of an $n$-universal $\co$-lattice}, where precise definitions will be given in the next section.  When $K = \Q$, it is well-known that $I_{n+3}$ is $n$-universal and thus $U_\Z(n) = n+3$ for all positive $n \leq 5$  (see \cite{Mordell1930, Ko1937}).  Oh \cite{Oh00} computed the values $U_\Z(n) = 13, 15, 16, 28, 30$ for $n = 6, \dots, 10$, respectively. It is folklore that $U_\Z(n)$ has an easy upper bound $(n+3)\cl_\Q(I_{n+3})$, for $I_{n+3}$ is locally $n$-universal. Oh \cite{Oh07} provided a lower bound for the minimal rank of $n$-regular lattices, which also serves as a lower bound for $U_\Z(n)$ when $n$ is large, since he proved in the same paper that every $n$-regular $\Z$-lattice is $n$-universal for $n \ge 28$.

It is known that $\MRU{K}{n}$ is well-defined for any totally real number field $K$ and for all $n \geq 1$ (e.g. see \cite{ChanOh2023}). %
As mentioned above, numerous authors have studied the values of $\MRU{K}{1}$ for various totally real fields, e.g. see \cite{Earnest98, Kalasurvey, Kim04} and the references therein for further details. In contrast, little is known about $\MRU{K}{n}$ for $K \ne \Q$ and $n \ge 2$.  In 2006, Sasaki \cite{Sasaki2006} showed that $\MRU{K}{2}$ equals $6$ when $K = \Q(\sqrt2)$ or $\Q(\sqrt5)$, and is at least $7$ when $K$ is any other real quadratic field.  Recently, Tinkov\'{a} and the third author have also shown various bounds for $\MRU{K}{n}$ \cite{TinkovaYatsyna}.

\bigskip

Our first theorem is to provide an explicit asymptotic formula for the logarithm of $\MRU{K}{n}$ as $n \to \infty$, in particular explicitly giving the main term and the second-order term for $\log \MRU{K}{n}$:
\begin{theorem} \label{thm:mainasymp}
Let $K$ be a totally real degree $d$ number field of discriminant $\Delta_K$.  Then, with the exception of a finite set of at most $\numquadexceptions$ real quadratic fields $K$, we have
\begin{equation} \label{eq:main}
    \log \MRU{K}{n} = \frac{d}{4} n^2 \log{n} + n^2 \Big( \nquadcoeff \Big)  +  \bigO{n \log n} %
\end{equation}
as $n \to \infty$.
\end{theorem}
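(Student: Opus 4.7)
My plan is to establish the asymptotic \eqref{eq:main} by proving matching upper and lower bounds for $\log \MRU{K}{n}$, both arising from careful analysis of the Minkowski--Siegel mass formula for positive definite $\co$-lattices.

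\textbf{Lower bound.} Since $L$ being $n$-universal forces every positive definite rank-$n$ $\co$-lattice to embed into $L$, I would count, on one side, the number of isometry classes of rank-$n$ $\co$-lattices of determinant at most a carefully chosen threshold $D = D(n)$. By the Minkowski--Siegel mass formula combined with Stirling's approximation applied to the product $\prod_{j \leq n} \Gamma(j/2)^d \pi^{-jd/2}$, this number grows like $\exp\!\left( \tfrac{d}{4} n^2 \log n + \nquadcoeff \, n^2 + O(n \log n) \right)$. Here the $\Gamma$-factor product contributes, after Stirling, the $\tfrac{d}{4} n^2 \log n$ leading term together with the $-\tfrac{2d \log(2\pi) + 3d}{8} n^2$ part of the subleading constant, while the discriminant factor $\Delta_K^{n(n+1)/4}$ supplies the remaining $\tfrac{\log \Delta_K}{4} n^2$; the $\zeta_K(1-2j)$ values and local densities contribute only $O(n \log n)$. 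On the other side, a volume/Minkowski-reduction argument bounds the number of rank-$n$ sublattices of $L$ of determinant at most $D$ by a quantity polynomial in $\rank(L)$ with exponent $O(n^2)$. Comparing the two counts yields the claimed lower bound on $\log \rank(L)$.

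\textbf{Upper bound.} I would produce an explicit $n$-universal $\co$-lattice $L$ of rank matching the right-hand side of \eqref{eq:main} up to $O(n \log n)$. Standard local lattice theory supplies, at each finite place $\mathfrak{p}$, a locally $n$-universal $\co[K_{\mathfrak{p}}]$-lattice of rank $n + O(1)$; these glue into a global genus via the Hasse--Minkowski principle. To upgrade local $n$-universality to global $n$-universality, one inflates the rank by orthogonal direct summands until the resulting genus has mass exceeding the mass of all rank-$n$ classes that must be represented. A quantitative Siegel-type local-to-global representation theorem then forces every rank-$n$ class to be genuinely represented. The number of summands needed is again controlled by the mass formula, and Stirling's approximation reproduces the same two leading terms as in the lower bound.

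\textbf{Main obstacle.} The principal difficulty is aligning the second-order coefficient $\nquadcoeff$ precisely in both bounds. This requires delicate Stirling-style asymptotics for the product of $\Gamma$-factors, sharp control of the local densities at dyadic and ramified primes, and effective estimates for $\zeta_K$ at negative integers (via the functional equation). The exceptional set of at most $\numquadexceptions$ real quadratic fields very likely arises from small-$\Delta_K$ cases in which either an effective lower bound on $\zeta_K$ or an effective class-number bound cannot be established without explicit computation; these fields have to be enumerated and handled individually.
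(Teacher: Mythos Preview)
Your strategy diverges substantially from the paper's, and in its current form contains genuine gaps.

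\textbf{The paper's actual route.} The paper does \emph{not} count rank-$n$ lattices of bounded determinant. Instead, both bounds go through \emph{indecomposable unimodular} lattices. The key structural lemma (Lemma~\ref{bounds}) says: any $n$-universal $L$ must represent $\lfloor n/\rank J\rfloor$ copies of each indecomposable unimodular $J$ with $\rank J\le n$; since unimodular sublattices split orthogonally and indecomposables cannot overlap, $\rank L\ge \sum_{J}\lfloor n/\rank J\rfloor\rank J$. Conversely, the orthogonal sum $\bigperp_{J}\lfloor(n+3)/\rank J\rfloor J$ over indecomposable unimodulars of rank $\le n+3$ is $n$-universal (via Lemma~\ref{uni I n+3}). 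This reduces everything to estimating $\IUL{K}{r}$, the number of rank-$r$ indecomposable unimodular classes. The upper bound $\IUL{K}{r}\le\UL{K}{r}$ is trivial; for the lower bound the paper proves $\IUL{K}{n}\sim\UL{K}{n}$ using Wright's theorem on inverse Euler transforms (Theorem~\ref{thm:wright}). Siegel's mass formula then gives the asymptotic for $\UL{K}{n}$, with the automorphism-group contribution (bounded via Collins/Friedland/Schur) affecting only the $O(n\log n)$ term.

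\textbf{Where your lower bound breaks.} Your claim that the number of rank-$n$ sublattices of $L$ with determinant $\le D$ is ``polynomial in $\rank(L)$ with exponent $O(n^2)$'' is not correct as stated: that count depends heavily on $D$, and for the comparison to work you would need the number of isometry classes to outgrow the number of sublattices as $D$ varies, which you have not argued. The paper sidesteps this entirely with the splitting trick for unimodular indecomposables.

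\textbf{Where your upper bound breaks.} Inflating rank until ``mass exceeds the mass of all rank-$n$ classes'' and then invoking a quantitative local-to-global principle is not how $n$-universality is established here; effective Siegel-type theorems (Hsia--Kitaoka--Kneser, etc.) give representability only for lattices with large minimum, not for \emph{all} rank-$n$ lattices. The paper instead builds an explicit $n$-universal lattice by brute-force orthogonal summation of all relevant indecomposable unimodulars.

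\textbf{The $\numquadexceptions$ exceptions.} Your guess about their origin is off. They have nothing to do with $\zeta_K$ or class-number bounds. They arise because Wright's criterion \eqref{eq:Wrightcondition} requires the gap between the upper and lower bounds for $\log\UL{K}{n}$ to be smaller than the increment $Q_K(n)-Q_K(n-1)\sim\tfrac{d}{2}n\log n$. For $d\ge 3$ this is automatic; for $d=2$ it holds only when condition~\eqref{eq:wright_deg2_condition} on $\Delta_K$ is satisfied, which fails for exactly $\numquadexceptions$ small-discriminant real quadratic fields (Corollary~\ref{cor:d2}). For $K=\Q$ the paper uses Bannai's theorem instead.
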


In particular, this proves that, for any $\eps > 0$, we have that $\MRU{K}{n} > n^{(d/4 - \eps)n^2}$ for all sufficiently large $n$, thus showing that $\MRU{K}{n}$ grows very quickly as
$n \to \infty$.  %
We also show that, for \emph{all} totally fields $K$ (including all real quadratic fields), we have
\begin{equation} \label{eq:limsup}
    \limsup_{n \to \infty} \frac{ \log \MRU{K}{n}}{(d/4) n^2 \log n} = 1
\end{equation}
and in particular, we have that $(\ref{eq:main})$ holds for infinitely many positive integers $n$.

We can also prove the following effective density result, which is analogous to the results of Pfeuffer \cite{Pfeuffer} which gives a finiteness statement for the class numbers $\cl_K(I_n)$.  Here, we prove a similar result for the minimal rank $\MRU{K}{n}$ and for sizes of $n$-universal criterion sets $\CS{K}{n}$.

\begin{theorem} \label{thm:finite}
    Let $C > 0$ and fix some positive integer $n \geq 3$.  Then there are only finitely many totally real fields $K$ such that $\MRU{K}{n} < C$.  In particular, all such fields $K$ satisfying $\MRU{K}{n} < C$ are contained within an effectively computable finite set of fields.
    
\end{theorem}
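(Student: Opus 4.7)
The plan is to reduce the claim to the Hermite-Minkowski theorem via an effective lower bound on $\MRU{K}{n}$ that grows with $\Delta_K$. Concretely, I would extract from the proof of Theorem~\ref{thm:mainasymp} a non-asymptotic inequality of the form
\[
\log \MRU{K}{n} \;\geq\; \frac{n^2}{4} \log \Delta_K - B(n, d)
\]
holding for every totally real $K$ of degree $d$ and every $n \geq 3$, where $B(n, d)$ depends only on $n$ and $d$. Assuming this, the hypothesis $\MRU{K}{n} < C$ forces $\log \Delta_K < \frac{4}{n^2}\bigl(\log C + B(n, d)\bigr)$; combining with Minkowski's bound $\log \Delta_K \geq c\, d$ (for an absolute constant $c > 2$) bounds $d$ explicitly, provided $B(n, d)$ grows sufficiently slowly in $d$, as it does in view of the corresponding coefficient appearing in~\eqref{eq:main}. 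With both $d$ and $\Delta_K$ bounded effectively in terms of $n$ and $C$, the Hermite-Minkowski theorem supplies a finite, effectively enumerable set of candidate fields $K$.

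The non-asymptotic bound itself I would obtain as follows. An $n$-universal lattice $L$ over $\co$ represents every lattice in the genus of $I_n$, and hence every isomorphism class in that genus. A counting argument on the number of rank-$n$ sublattices of $L$ relates $\rank(L)$ to $\cl_K(I_n)$ from below, and the Minkowski-Siegel mass formula---made effective in the spirit of Pfeuffer's work already alluded to in the paper---bounds $\cl_K(I_n)$ from below by a quantity proportional to $\Delta_K^{n/2}$, up to local factors at primes above $2$ and primes dividing $\disc(I_n)$, and up to zeta-value factors that can themselves be controlled in terms of $d$. Assembling these estimates produces the desired inequality.

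The main obstacle is converting the asymptotic statement of Theorem~\ref{thm:mainasymp} into a bound that holds for fixed $n \geq 3$ uniformly in $K$. The $O(n \log n)$ error term in~\eqref{eq:main} conceals constants that a priori may depend on $K$, and extracting an explicit $\Delta_K$-free function $B(n, d)$ requires a careful audit of the local density and mass formula estimates feeding into the main theorem. Once this has been done, the deduction of Theorem~\ref{thm:finite} is essentially a bookkeeping exercise, and the exceptional finite set of at most $\numquadexceptions$ real quadratic fields from Theorem~\ref{thm:mainasymp} is simply appended to the final list of candidate fields.
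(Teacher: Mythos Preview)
Your high-level architecture (mass formula $\Rightarrow$ lower bound on $\cl_K(I_n)$ growing with $\Delta_K$ $\Rightarrow$ Odlyzko plus Hermite--Minkowski) matches the paper's. The gap is precisely the step you gloss over as ``a counting argument on the number of rank-$n$ sublattices of $L$ relates $\rank(L)$ to $\cl_K(I_n)$ from below.'' This is the entire content of the theorem, and you have not said what the argument is. A rank-$m$ lattice has infinitely many rank-$n$ sublattices, so some structure must be invoked. The paper's mechanism is: every unimodular sublattice splits off orthogonally, so an $n$-universal $L$ must contain $\lfloor n/r\rfloor$ orthogonal copies of each indecomposable unimodular lattice of rank $r\le n$ (Lemma~\ref{bounds}), giving $\MRU{K}{n}\ge \max_{k\le n}\IUL{K}{k}$. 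One then needs to pass from $\max_k \IUL{K}{k}$ back to $\UL{K}{n}\ge \cl_K(I_n)$, which the paper does via a clean Euler-transform inequality (Theorem~\ref{thm:eulerbounds}): if $\{b_n\}$ is the Euler transform of $\{a_n\}$ then $\max_{k\le n} a_k \ge (b_n/(n!)^2)^{1/n}$. Combining these yields $\MRU{K}{n}\ge (2\,\massI{K}{n}/(n!)^2)^{1/n}$, and then the effective mass bound (Theorem~\ref{thm:massInbounds}) plus Odlyzko finishes. Note the resulting exponent on $\Delta_K$ is $\tfrac{n-1}{4}$, not $\tfrac{n^2}{4}$; the $1/n$-th root loss is intrinsic to this route.

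Your plan to ``extract from the proof of Theorem~\ref{thm:mainasymp}'' will not work as stated: the lower bound there goes through $\IUL{K}{n}\sim \UL{K}{n}$ via Wright's criterion, which is asymptotic in $n$ with a threshold depending on $K$, so it gives nothing for a fixed $n\ge 3$ uniformly over all $K$. This is exactly why the paper proves Theorem~\ref{thm:finite} by the separate, non-asymptotic Euler-transform route above. For the same reason, the $\numquadexceptions$ exceptional real quadratic fields are a red herring here: they arise only from the failure of Wright's criterion and play no role in the proof of Theorem~\ref{thm:finite}, which is completely uniform in $K$.
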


To give an explicit bound, if $n \geq 3$ and $\MRU{K}{n} < C$, then we can show that the degree $d$ and discriminant $\Delta_K$ of such fields $K$ satisfy the following bounds:
\begin{equation} \label{eq:maindbound}
    d \leq 
    \frac{4 n \log(C) + 8 \log(n!) + 254n(n-1) - 4\log(2)}{
    \log ( (2 \cdot 60.1^{n(n-1)/4 }  \prod_{i=1}^n \Gamma(i/2) ) / ( 2^{(n+5)/2} \cdot \pi^{n(n+1)/4} \cdot (2 \zeta(n/2) )^{\delta_{2 \mid n}} ) )}
\end{equation}
and 
\begin{equation} \label{eq:maindiscbound}
    \Delta_K \leq 
    \Big( \frac{C^n (n!)^2 \pi^{dn(n+1)/4} 2^{d(n+5)/2} (2 \zeta(n/2))^{d\delta_{2 \mid n}} }{2 \cdot (2 \prod_{i=1}^n \Gamma(i/2) )^d  }  \Big)^{4 / (n(n-1))}.
\end{equation}
where $\delta_{2 \mid n} = 1$ if $n$ is even, and $0$ otherwise.

We remark that Theorem~\ref{thm:finite} thereby proves a strong version of the $n$-universal analogue of Kitaoka’s conjecture, for all $n \geq 3$.

We can also show an analogous theorem giving lower bounds for $\MRU{K}{2}$.  We recall that a field $K$ has a \emph{normal tower} from $\Q$ if there exist fields $K_1, K_2, \dots, K_m$ such that $\Q = K_1 \subseteq K_2 \subseteq \dots \subseteq K_m = K$ and $K_{i+1} / K_i$ is a relatively normal extension for all $i = 1, \dots, m-1$.

\begin{theorem} \label{thm:finite2}
    Let $C > 0$, and fix some positive integer $d \geq 1$.  Then there are only finitely many degree $d$ totally real fields $K$ such that $\MRU{K}{2} < C$.  In particular, if $d \geq 3$, then all such fields $K$ satisfying $\MRU{K}{2} < C$ are contained within an effectively computable finite set of fields. 
    Furthermore, there are only finitely many such fields $K$ in total (of arbitrary degree) which have a normal tower from $\Q$.  There are only finitely many such fields $K$ in total, if either the generalised Riemann hypothesis or the Artin conjecture (on the analyticity of Artin $L$-functions) is true.
\end{theorem}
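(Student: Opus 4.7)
The plan is to establish each of the three finiteness assertions by combining an upper bound on $\Delta_K$, derived from the assumption $\MRU{K}{2} < C$, with an appropriate lower bound on the root discriminant $\Delta_K^{1/d}$.

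The first step is to extend the techniques behind Theorem~\ref{thm:finite} to the $n = 2$ setting in order to produce an explicit upper bound of the form $\Delta_K \leq B(d,C)$ for any totally real $K$ of degree $d$ with $\MRU{K}{2} < C$. The bound \eqref{eq:maindiscbound} cannot be used directly at $n = 2$, since $\zeta(n/2) = \zeta(1)$ diverges there; a natural workaround is to truncate the global zeta sum over prime ideals at some height depending on $d$, which produces a weaker but still explicit function $B(d, C)$ valid for $d \geq 3$ (while for $d = 1, 2$ one instead obtains a non-effective bound via a compactness argument). Once such a $B$ is in place, the fixed-degree statement follows from Hermite's theorem on the finiteness of number fields of given degree and bounded discriminant, with effectiveness whenever $d \geq 3$.

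For the normal-tower assertion, the plan is to apply a theorem of Stark based on Brauer induction: whenever $K$ admits a normal tower over $\Q$, the quotient $\zeta_K(s)/\zeta(s)$ decomposes as a product (with possibly negative integer multiplicities) of Hecke $L$-functions, each of which is unconditionally entire. This decomposition yields an unconditional lower bound on $\Delta_K^{1/d}$ which tends to infinity with $d$. Combining this with $\Delta_K \leq B(d, C)$ forces $d$ to lie in a bounded range, at which point the fixed-degree case closes the argument.

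The GRH/Artin assertion is handled along the same lines: Odlyzko-type lower bounds on $\Delta_K^{1/d}$ hold under GRH, and analogous lower bounds hold assuming the analyticity of Artin $L$-functions, in both cases without any normal tower hypothesis. Combining either bound with $B(d, C)$ again bounds $d$ and reduces to the fixed-degree case. The main obstacle is the derivation of $B(d, C)$ for $n = 2$, where the strategy underlying Theorem~\ref{thm:finite} breaks down at the boundary of convergence of the relevant zeta sums; the remainder of the proof is a uniform combination of Hermite's theorem with the appropriate lower bound on the root discriminant in each context.
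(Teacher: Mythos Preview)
Your approach diverges substantially from the paper's, and the divergence introduces a real gap.

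The paper's argument is much shorter: from Lemma~\ref{bounds} one has $\MRU{K}{2} \geq \max\{\IUL{K}{1},\IUL{K}{2}\}$, and Theorem~\ref{thm:eulerbounds} then gives $\UL{K}{2} \leq 4C^2$, hence $\cl_K(I_2) \leq 4C^2$. At this point the paper simply invokes Pfeuffer's theorem (Theorem~\ref{thm:pfeuffer}) as a black box, which already contains \emph{all three} finiteness statements (fixed degree, normal tower, GRH/Artin). The explicit bound for $d\geq 3$ comes separately from the Stark--Louboutin estimates on $\Res_{s=1}\zeta_K(s)$ (Theorem~\ref{thm:dedekindbounds}), which control the factor $\sigma_2 = L(1,\psi,K)\cdot\prod_{\fp\mid 2}(\cdots)$ in K\"orner's mass formula.

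Your proposal instead tries to reprove Pfeuffer's theorem from scratch, and the sketch has two problems. First, the ``truncation of the zeta sum'' to handle the divergence at $n=2$ is not the issue: the obstruction is the factor $\sigma_2$, which is a value of an Artin $L$-function at $s=1$, and one needs genuine residue bounds (Stark, Louboutin) rather than any truncation of an Euler product. Second, and more seriously, your plan to combine $\Delta_K \leq B(d,C)$ with a root-discriminant lower bound ``tending to infinity with $d$'' does not work as stated. Root discriminants of totally real fields do \emph{not} tend to infinity with $d$, even for fields reached by normal towers or under GRH (Odlyzko's bounds are asymptotically constant, and Golod--Shafarevich towers have bounded root discriminant). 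The actual role of the normal-tower / GRH / Artin hypotheses in Pfeuffer's argument is to produce sharper lower bounds for $L(1,\psi,K)$, which in turn sharpen the lower bound on $\massI{K}{2}$ enough that, \emph{combined} with Odlyzko's constant root-discriminant bound, one can control $d$. Your outline misattributes where the hypotheses enter, and as written the argument would not close.
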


To give an explicit bound, if $\MRU{K}{2} < C$ and $d \geq 3$, then we have the upper bound on the discriminant $\Delta_K$:
\begin{equation} \label{eq:maindiscbound2}
    \Delta_K \leq \max \left( \exp \big( (12(d-1))^2 \big), \; \Big (\frac{ 2 C^2 \cdot (\pi \cdot 2^{5/2})^d \cdot d \cdot (2d)!   }{ 0.000181 \cdot ((d-1)/e)^{d-1} } \Big)^{\frac{12d}{5d-12}} \right).
\end{equation}

We present a brief outline of our methods we use to bound $\MRU{K}{n}$:
\begin{itemize}
    \item We first reduce the problem to counting the number of indecomposable unimodular $\co$-lattices of rank $m$ for each $m \leq n+3$. This is shown in Lemma~\ref{bounds}.

    \item By denoting $\IUL{K}{m}$ as the number of rank $m$ indecomposable unimodular $\co$-lattices, and $\UL{K}{m}$ as the number of rank $m$ unimodular $\co$-lattices, we use a combinatorial theorem of Wright \cite{Wright68} (Theorem~\ref{thm:wright})  to show that $\IUL{K}{m} \sim \UL{K}{m}$ as $m \to \infty$, for all totally real fields $K$, except for a finite set of exceptional real quadratic fields.

    \item This thus reduces the problem to obtaining bounds on $\UL{K}{n}$.  To obtain a lower bound, we use Siegel's mass formula to compute an explicit asymptotic formula for $\massI{K}{n}$.  To obtain an upper bound, we use again Siegel's mass formula in addition to obtaining a bound on the orders of finite subgroups of $\GL_n(\co)$, applying some known classical bounds of Schur \cite{Schur} (Theorem~\ref{thm:schur}).
\end{itemize}

It is natural to ask whether we can prove similar bounds for $\MRU{K}{n}$, as given in Theorems~\ref{thm:finite} and \ref{thm:finite2}, in the case $n = 1$.  Unfortunately our methods bounding $\MRU{K}{n}$ rely crucially on bounds for the class number $\cl_K(I_n)$ and in particular the Siegel mass constant $\massI{K}{n}$.  While the results of Pfeuffer \cite{Pfeuffer} provide lower bounds for $\massI{K}{n}$ and $\cl_K(I_n)$ that grow with the discriminant $\Delta_K$ for $n \geq 2$, in the case of $n = 1$, we simply have $\cl_K(I_1) = 1$ and $\massI{K}{1} = 1/2$ for all totally real fields $K$.  This unfortunately yields no useful lower bounds for $\MRU{K}{1}$ (besides the trivial lower bound $\MRU{K}{1} \geq 1$). More importantly, even the statement itself cannot hold in the case of $n=1$ for an arbitrary choice of $C$, as we know that there exist infinitely many real quadratic fields with a universal quadratic form of rank $8$ \cite{Kim00}.

Whilst Theorem~\ref{thm:mainasymp} implies that $\MRU{K}{n}$ grows very quickly with $n$, the $\bigO{n \log n}$ error term means that it does not immediately tell us the size of ``short gaps'' of the form $\MRU{K}{n+k} - \MRU{K}{n}$ for small $k$.  We therefore further show that, at least for $k \geq 4$, such gaps must grow with the discriminant $\Delta_K$ of $K$.

\begin{theorem} \label{thm:shortgap}
    Let $C > 0$, fix $d \geq 1$ and fix some even positive integer $n$.  Then there are only finitely many degree $d$ totally real fields $K$ such that $\MRU{K}{n+4} - \MRU{K}{n} < C$. 
    Furthermore, if $n \equiv 2$ mod 6 or if either the generalised Riemann hypothesis or the Artin conjecture is true, then there are only finitely many totally real fields $K$ (of arbitrary degree) such that $\MRU{K}{n+4} - \MRU{K}{n} < C$.  

\end{theorem}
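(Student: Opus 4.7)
The plan is to reduce the gap estimate to a growth bound for the count of rank-$(n+4)$ indecomposable unimodular lattices, and then to apply the machinery underlying Theorems~\ref{thm:finite} and~\ref{thm:finite2}. By Lemma~\ref{bounds}, the minimal rank $\MRU{K}{n}$ is determined by the multiset of indecomposable unimodular $\co$-lattices of rank at most $n+3$. Incrementing $n$ by $4$ therefore forces new contributions from indecomposables of ranks in $\{n+4,n+5,n+6,n+7\}$, and in particular one should obtain an inequality roughly of the form
\[
  \MRU{K}{n+4} - \MRU{K}{n} \;\geq\; (n+4)\,\IUL{K}{n+4},
\]
reflecting that enough rank-$(n+4)$ indecomposable summands must appear in any $(n+4)$-universal lattice to represent the new rank-$(n+4)$ sublattices not covered by the $n$-universal one. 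The key point is that since $n$ is even, $n+4$ is also even, which permits the unconditional mass-formula estimates available for even-rank unimodular lattices.

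Next I would invoke Wright's theorem (Theorem~\ref{thm:wright}), which for all totally real $K$ outside the finite set of exceptional real quadratic fields identified in the outline gives $\IUL{K}{n+4} \sim \UL{K}{n+4}$, reducing the problem to bounding $\UL{K}{n+4}$ from below. Siegel's mass formula then yields
\[
  \UL{K}{n+4} \;\geq\; \frac{\massI{K}{n+4}}{\max_{L \in \gen(I_{n+4})} |O(L)|},
\]
with numerator an explicit product of $\Gamma$-factors, local densities, and Dedekind zeta values $\zeta_K(2), \zeta_K(4), \ldots, \zeta_K((n+4)/2)$ (using that $n+4$ is even), and denominator controlled by Schur's bound (Theorem~\ref{thm:schur}) on finite subgroups of $\GL_{n+4}(\co)$. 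As in the derivation of (\ref{eq:maindiscbound2}), this produces a bound of the shape
\[
  \UL{K}{n+4} \;\geq\; A(d,n)\,\Delta_K^{(n+4)(n+3)/4}
\]
for an explicit function $A(d,n)$, and combining with the inequality above gives the desired lower bound on the gap.

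For fixed degree $d$, Minkowski's discriminant bound immediately restricts to finitely many $K$, proving the first assertion. For arbitrary degree, it remains to force $d$ itself to be bounded. Under GRH or the Artin conjecture, the Odlyzko-type lower bounds of the form $\Delta_K \geq c^d$ close the argument verbatim as in the proof of Theorem~\ref{thm:finite2}. Unconditionally, when $n \equiv 2 \pmod 6$, the rank $n+4$ is divisible by $6$, so the argument $(n+4)/2$ of the highest-order Dedekind zeta factor is a multiple of $3$; the corresponding special structure in the mass formula (notably in the Bernoulli numbers and the $2$-adic densities that appear) should then be exploited to extract an unconditional lower bound on $A(d,n)$ that grows fast enough in $d$ to bound $d$ against the fixed constant $C$. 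The main obstacle is precisely this last step: verifying that, in this particular arithmetic progression, the Siegel mass lower bound unconditionally dominates Schur's bound on the orthogonal group order as $d$ grows. This delicate trade-off is the technical heart of the argument and the reason the unconditional case is confined to the $n \equiv 2 \pmod 6$ regime.
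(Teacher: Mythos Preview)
Your approach has a fundamental gap and also misidentifies the mechanism behind the $n \equiv 2 \pmod 6$ condition.

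The main error is the appeal to Wright's theorem. Theorem~\ref{thm:wright} (and its consequences, Theorems~\ref{thm:wrightasymp} and~\ref{thm:wrightasymp2}) are asymptotic statements for a \emph{fixed} field $K$ as $n \to \infty$; they say nothing about $\IUL{K}{n+4}$ for a \emph{fixed} $n$ as $K$ varies. For fixed $n$, there is no guarantee that $\IUL{K}{n+4}$ is even positive, let alone comparable to $\UL{K}{n+4}$, so your chain of inequalities breaks down immediately. (Also, your displayed inequality for $\UL{K}{n+4}$ is backwards: dividing the mass by $\max |O(L)|$ gives a lower bound for the class number only when multiplied; the correct trivial bound is $\UL{K}{n+4} \ge \cl_K(I_{n+4}) \ge 2\,\massI{K}{n+4}$.)

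The paper's argument is entirely different and much simpler. Subtracting the upper bound of Lemma~\ref{bounds} for $\MRU{K}{n}$ from the lower bound for $\MRU{K}{n+4}$ gives
\[
  \MRU{K}{n+4} - \MRU{K}{n} \;\ge\; \sum_{k \ge 1} \Bigl(\Bigl\lfloor \tfrac{n+4}{k}\Bigr\rfloor - \Bigl\lfloor \tfrac{n+3}{k}\Bigr\rfloor\Bigr) k\, \IUL{K}{k}.
\]
The point is that the \emph{small} $k$ terms survive: for $n$ even one has $\lfloor (n+4)/2 \rfloor > \lfloor (n+3)/2 \rfloor$, so the gap is at least $\IUL{K}{1} + 2\,\IUL{K}{2} \ge \max\{\IUL{K}{1}, \IUL{K}{2}\}$, which by Theorem~\ref{thm:eulerbounds} dominates $(\massI{K}{2}/2)^{1/2}$. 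This feeds directly into the Pfeuffer-type bounds for $\massI{K}{2}$ underlying Theorem~\ref{thm:finite2}. The role of $n \equiv 2 \pmod 6$ is purely arithmetic in this floor-function sense: it additionally forces $\lfloor (n+4)/3 \rfloor > \lfloor (n+3)/3 \rfloor$, so the gap also picks up $3\,\IUL{K}{3}$, and one can reduce to $\massI{K}{3}$, for which the \emph{unconditional} bounds of Theorem~\ref{thm:massInbounds} (valid precisely for rank $\ge 3$) apply. Your speculation about $(n+4)$ being divisible by $6$ and special Bernoulli structure is not the mechanism at all.
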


In particular, if $\MRU{K}{n+4} - \MRU{K}{n} < C$ and $d \geq 3$, then the discriminant $\Delta_K$ satisfies the same bound given in (\ref{eq:maindiscbound2}).
If $n \equiv 2$ (mod 6) 
, then the degree $d$ and discriminant $\Delta_K$ furthermore satisfy the bounds given in (\ref{eq:maindbound}) and (\ref{eq:maindiscbound}) with $n$ replaced with $3$.

We can also prove an analogous result to Theorem~\ref{thm:finite} and Theorem~\ref{thm:finite2} for sizes of criterion sets for $n$-universal lattices.  Recall that an \emph{$n$-universal criterion set} is a set $\mathcal{S}$ of rank $n$ $\mathcal{O}_K$-lattices with the property that any lattice $L$ is $n$-universal if and only if $L$ represents all lattices in $\mathcal{S}$.  We denote an $n$-universal criterion set for $\mathcal{O}_K$ as $\CS{K}{n}$. By a theorem of Chan--Oh \cite{ChanOh2023}, it is known that finite $n$-universal criterion sets $\CS{K}{n}$ always exist for any totally real field $K$ and $n \geq 1$.

\begin{theorem} \label{thm:criterionset}
    Let $C > 0$ and let $n \geq 3$.  Then there are only finitely many totally real fields $K$ which admit an $n$-universal criterion set $\CS{K}{n}$ such that $\card{ \CS{K}{n} } < C$.  In particular, if $\card{ \CS{K}{n} } < C$ then the degree $d$ and discriminant $\Delta_K$ of $K$ satisfy the same upper bounds given in (\ref{eq:maindbound}) and (\ref{eq:maindiscbound}).

    Furthermore, for any fixed $d \geq 1$ there are only finitely many degree $d$ totally real fields $K$ which admit a 2-universal criterion set $\CS{K}{2}$ such that $\card{ \CS{K}{2} } < C$.  In particular, if $d \geq 3$ and $\card{ \CS{K}{2} } < C$, then the discriminant $\Delta_K$ of $K$ satisfies the same upper bound given in (\ref{eq:maindiscbound2}).
    
\end{theorem}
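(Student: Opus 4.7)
The plan is to reduce Theorem~\ref{thm:criterionset} directly to Theorems~\ref{thm:finite} and~\ref{thm:finite2} by converting a small $n$-universal criterion set into a low-rank $n$-universal lattice. Suppose $K$ admits an $n$-universal criterion set $\CS{K}{n} = \{L_1, \dots, L_s\}$ with $s = \card{\CS{K}{n}} < C$. I would form the orthogonal direct sum
\[
    M := L_1 \perp L_2 \perp \cdots \perp L_s,
\]
a $\co$-lattice of rank $sn$. Since each $L_i$ sits inside $M$ as an orthogonal summand, any $\co$-lattice that represents $M$ also represents every $L_i$ individually, and hence is $n$-universal by the defining property of the criterion set. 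Applying this observation to $L = M$ itself shows that $M$ is $n$-universal, so
\[
    \MRU{K}{n} \leq \rank M = sn < Cn.
\]

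With this inequality in hand, the first half of the theorem (for fixed $n \geq 3$) follows immediately from Theorem~\ref{thm:finite} applied with $Cn$ in place of $C$: the fields $K$ are contained in the effectively computable finite set produced there, and their degree and discriminant satisfy the bounds (\ref{eq:maindbound}) and (\ref{eq:maindiscbound}) after this substitution. The second half (for $n = 2$ and fixed degree $d \geq 1$) follows in exactly the same way by invoking Theorem~\ref{thm:finite2} with $2C$ in place of $C$, which when $d \geq 3$ transfers the discriminant bound (\ref{eq:maindiscbound2}). In each case, since $n$ (respectively $d$) is held fixed, the substitution merely changes the explicit constants, so the stated bounds hold in the same form.

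There is no serious obstacle here: the theorem is essentially a one-line corollary of the orthogonal-sum construction, and all of the analytic, geometric, and mass-formula machinery has already been absorbed into Theorems~\ref{thm:finite} and~\ref{thm:finite2}. The only detail to verify is the routine representation-theoretic point that an isometric embedding $M \hookrightarrow L$ restricts to isometric embeddings of each orthogonal summand $L_i \hookrightarrow L$, which is immediate from the definition of representation of lattices.
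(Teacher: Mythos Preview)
Your orthogonal-sum argument is correct and yields the finiteness statements in both parts of the theorem, but it is a genuinely different route from the paper's and it gives strictly weaker explicit constants than what the theorem actually claims.

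The paper does not pass through $\MRU{K}{n}$. Instead it uses the structural Lemma~\ref{lem:criterion common}: for each indecomposable unimodular $\co$-lattice $L$ of rank $\leq n$, any criterion set $\CS{K}{n}$ must contain a distinct element representing $\floor{n/\rank L}L$. This gives
\[
  \card{\CS{K}{n}} \;\ge\; \IUL{K}{1}+\dots+\IUL{K}{n} \;\ge\; \max\{\IUL{K}{1},\dots,\IUL{K}{n}\},
\]
and then Proposition~\ref{prop:maxIULbounds} converts the bound $\max\{\IUL{K}{1},\dots,\IUL{K}{n}\}<C$ into \emph{exactly} the same inequality $\massI{K}{n}\le C^n(n!)^2/2$ that drives the proof of Theorem~\ref{thm:finite}. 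Hence the identical bounds (\ref{eq:maindbound}), (\ref{eq:maindiscbound}), (\ref{eq:maindiscbound2}) transfer verbatim, with the same $C$.

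Your route yields only $\MRU{K}{n}<Cn$, so invoking Theorem~\ref{thm:finite} forces you to substitute $Cn$ for $C$ in (\ref{eq:maindbound}) and (\ref{eq:maindiscbound}); this introduces an extra factor of $n^n$ in the mass bound and hence genuinely different (larger) numerical bounds on $d$ and $\Delta_K$. Your closing remark that ``the stated bounds hold in the same form'' papers over the fact that the theorem asserts \emph{the same} bounds, not merely bounds of the same shape. So while your argument is pleasantly elementary and establishes the qualitative finiteness, it does not prove the ``In particular'' clauses as written; for those you need the sharper inequality coming from Lemma~\ref{lem:criterion common}.
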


As with the minimal rank $\MRU{K}{n}$, this proves that the sizes of $n$-universal criterion sets $\CS{K}{n}$ for $n \geq 2$ grows very quickly for larger fields $K$.

On the other hand, the rapid growth of $\MRU{K}{n}$ with $n$ is connected not only to the size of $n$-universal criterion sets, but also to their uniqueness.
Although it has recently been shown \cite{KKR2024} that $1$-universal criterion set that is minimal with respect to inclusion is always unique up to isometry, it is worth cautioning that, for some fields $K$ and integers $n$, there might not exist a \emph{unique minimal} such $n$-universal criterion set (e.g. see \cite{KLO2022} or \cite{EKK2013} for examples).
Theorem~\ref{thm:criterion nonunique} shows that the uniqueness actually fails to hold for most cases.
We refer the reader to Kala--Kr\'asensk\'y--Romeo \cite{KKR2024} and the references therein for further background on criterion sets.

\begin{theorem} \label{thm:criterion nonunique}
Let $K$ be a totally real number field. For sufficiently large $n$, there exist infinitely many $n$-universal criterion sets for $\co$ of the smallest cardinality up to isometry.
\end{theorem}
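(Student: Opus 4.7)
My plan is as follows. Starting from a minimum-cardinality $n$-universal criterion set $\mathcal{S}_{\min}=\{L_1,\ldots,L_s\}$, I would fix the index $i=1$ and construct infinitely many pairwise non-isometric rank-$n$ lattices $L^*$ such that $(\mathcal{S}_{\min}\setminus\{L_1\})\cup\{L^*\}$ is again an $n$-universal criterion set of cardinality $s$; this directly produces infinitely many non-isometric minimum criterion sets. Writing $\mathcal{T}=\{L_2,\ldots,L_s\}$ and letting $\mathcal{B}$ be the collection of lattices that represent every lattice in $\mathcal{T}$ but fail to be $n$-universal, minimality of $\mathcal{S}_{\min}$ forces $\mathcal{B}\neq\emptyset$ (for example, $M_0:=\bigperp_{j\geq 2}L_j\in\mathcal{B}$), and the condition on $L^*$ translates precisely to: no $M\in\mathcal{B}$ represents $L^*$.

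I would generate the candidate family by twists: for a totally positive $\alpha\in\co$, let $L_1^{(\alpha)}$ denote $L_1$ with its bilinear form rescaled by $\alpha$, so that the discriminant of $L_1^{(\alpha)}$ equals $\alpha^n$ times that of $L_1$. As $\alpha$ ranges over representatives of distinct classes in the monoid of totally positive elements of $\co$ modulo squares (an infinite family for any totally real number field), the lattices $L_1^{(\alpha)}$ are pairwise non-isometric rank-$n$ lattices. The substantive task is then to show that all but finitely many of them fail to be represented by any $M\in\mathcal{B}$. The strategy is to obtain a Minkowski/Hermite-type upper bound on the discriminant of any rank-$n$ sublattice of a relevant ``core'' witness in $\mathcal{B}$; once $\alpha^n d(L_1)$ surpasses that bound, no embedding of $L_1^{(\alpha)}$ can exist.

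The main obstacle is that $\mathcal{B}$ is not a priori bounded in rank: whenever $M\in\mathcal{B}$ and $N$ is chosen so that $M\perp N$ is still not $n$-universal, we have $M\perp N\in\mathcal{B}$, so a uniform discriminant bound across all of $\mathcal{B}$ is not immediate. To overcome this, I would reduce to a finite family of cores using the uniqueness of the indecomposable orthogonal decomposition of positive-definite $\co$-lattices (already used in Lemma~\ref{bounds}): if $M\in\mathcal{B}$ represents $L_1^{(\alpha)}$ via some embedding $\iota$, strip from $M$ all orthogonal indecomposable summands disjoint both from $\iota(L_1^{(\alpha)})$ and from the sublattices realising the representations of $L_2,\ldots,L_s$, obtaining a bounded-rank sublattice $M'\subseteq M$ which still lies in $\mathcal{B}$ and still represents $L_1^{(\alpha)}$. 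For $n$ sufficiently large, Theorem~\ref{thm:mainasymp} ensures that the resulting rank bound on $M'$ stays strictly below $\MRU{K}{n}$, so the cores form a finite collection. Over this finite family a uniform discriminant bound for rank-$n$ sublattices applies, and the twists $L_1^{(\alpha)}$ with $\alpha$ of sufficiently large norm supply the required infinite family of valid replacements.
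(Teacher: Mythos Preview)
Your approach has a genuine gap at the heart of the strategy. The scaling idea fails for precisely the large $\alpha$ you want to use. Fix any $M\in\mathcal{B}$ (such an $M$ exists by minimality, though not necessarily $M_0$ itself) and a rank-$n$ lattice $P$ with $P\not\to M$. Choose $\alpha$ to be, say, a rational integer $N$ larger than the trace of every successive minimum of $P$. Then every nonzero vector of $L_1^{(N)}$ has norm of trace at least $N$, so any representation $P\to M\perp L_1^{(N)}$ must land entirely in $M$, which is impossible. Hence $M\perp L_1^{(N)}$ is not $n$-universal, lies in $\mathcal{B}$, and visibly represents $L_1^{(N)}$. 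Thus for all sufficiently large $N$ the set $(\mathcal{S}_{\min}\setminus\{L_1\})\cup\{L_1^{(N)}\}$ is \emph{not} a criterion set, the opposite of what you claim. The intuition is that a heavily scaled $L_1^{(\alpha)}$ no longer forces $\Lambda$ to do whatever job $L_1$ was doing in the original criterion set; you have changed the constraint, not merely relabelled it.

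There are also secondary problems in the core-reduction step. Stripping indecomposable summands of $M$ not touched by the fixed embeddings does not bound the rank of $M'$: a single indecomposable summand of $M$ meeting $\iota_j(L_j)$ can have arbitrarily large rank. And even if the rank of the cores were bounded, ``rank $<\MRU{K}{n}$'' does not yield a finite family of lattices, so no uniform discriminant bound is available.

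By contrast, the paper does not vary the whole lattice $L_1$ but first pins down its structure. For $n$ large one finds an indecomposable unimodular $\co$-lattice $J$ of rank $j$ with $\lceil n/2\rceil+2\le j\le n-1$ and shows (Lemma~\ref{lem:criterion common}) that exactly one member of any minimum criterion set has the form $J\perp R$ with $\rank R=n-j\le j-4$. Proposition~\ref{pro:criterion replacement} then proves that $J\perp R$ may be swapped for $J\perp R'$ for \emph{any} rank-$(n-j)$ lattice $R'$ while preserving the criterion property: the essential constraint ``represent $J$'' is kept intact, and the complement $R'$ is free. This structural decomposition is exactly what your scaling approach lacks.
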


\bigskip
This paper is structured as follows.  In Section~\ref{sec:prelim}, we introduce the relevant notation and terminology used in the paper.  In Section~\ref{sec:localstuff}, we prove some preliminary lemmas on representing rank $n$ $\co$-lattices, particularly in the case of dyadic local fields.  In Section~\ref{sec:indecomposables}, we prove lower and upper bounds for $\MRU{K}{n}$ based on the number of indecomposable $\co$-lattices of rank $\leq n$.  In Section~\ref{sec:mass}, we introduce the Siegel mass constant $\mass{K}{L}$ and apply some results of K\"{o}rner \cite{Korner} and Pfeuffer \cite{Pfeuffer71} to prove necessary bounds for $\mass{K}{L}$.  In Section~\ref{sec:unimodular}, we give bounds for the maximal order of a finite subgroup of $\GL_n(\co)$ and use these results, along with our bounds on the Siegel mass constant, to prove bounds for $\IUL{K}{n}$, the number of rank $n$ indecomposable unimodular $\co$-lattices.  

In Section~\ref{sec:lower}, we apply a criterion of Wright \cite{Wright68} to obtain a lower bound for $\IUL{K}{n}$ and thus a lower bound for $\MRU{K}{n}$, and in Section~\ref{sec:upper}, we also obtain upper bounds for $\UL{K}{n}$ and thus an upper bound for $\MRU{K}{n}$.  Putting these bounds together gives us our proof of Theorem~\ref{thm:mainasymp}.  In Section~\ref{sec:effective_density} we prove Theorems~\ref{thm:finite} and \ref{thm:finite2}.  Section~\ref{sec:shortgaps} gives our bounds on the size of small gaps of the form $\MRU{K}{n+k} - \MRU{K}{n}$, and gives our proof of Theorem~\ref{thm:shortgap}.  Finally, in Section~\ref{sec:criterion}, we give some results on $n$-universal criterion sets and give our proof of Theorems~\ref{thm:criterionset} and \ref{thm:criterion nonunique}.

\section*{Acknowledgements}
We are very grateful to V\'{i}t\v{e}zslav Kala for his many valuable insights and comments.  Our discussions with him provided the primary source of motivation for the main results of this paper.  We also thank Byeong-Kweon Oh for many valuable comments and feedback on an earlier draft of this paper.

\section{Preliminaries} \label{sec:prelim}

Throughout this paper, we adopt the notation and terminology of O'Meara \cite{OMeara}.
For convenience, we record here the additional definitions and slight modifications that will be used in the sequel.
For the remainder of this paper, $K$ is a totally real number field of degree $d = [K : \Q]$ with discriminant $\Delta_K$ and $\co$ is its ring of integers.
For a finite place $\fp$ on $\co$, we denote by $K_\fp$ and $\co[\fp]$ the completion of $K$ and $\co$ at $\fp$, respectively.

Let $R$ stand for one of $K$, $\co$, $K_\fp$, or $\co[\fp]$, where $\fp$ is a finite place on $\co$.
A quadratic $R$-module is a finitely generated $R$-module $X$ equipped with a symmetric bilinear form $B : X \times X \to R$ and the associated quadratic form defined by $Q(x) := B(x, x)$ for all $x\in X$.
For any subset $S$ of $X$, the orthogonal complement of $S$ is denoted by $S^\perp = \{ x\in X \mid B(x, S) = 0 \}$.
If $X^\perp = 0$, then we call $X$ nondegenerate.
For quadratic $R$-modules $(X, B, Q)$ and $(X', B', Q')$, if there exists an $R$-linear map $\sigma$ from $X'$ into $X$ such that $Q'(x) = Q(\sigma(x))$ for all $x\in X'$, then we say that $X'$ is represented by $X$ and we write $\sigma : X' \to X$.
Moreover, if $\sigma$ is bijective, then we say that $X'$ is isometric to $X$ and we write $\sigma : X' \cong X$.
The set of all isometries from $X$ onto itself forms a group $\Aut(X)$.
When $X$ represents a line $R x$ with $Q(x)=\alpha$, we simply say that $X$ represents $\alpha$, denoted $\alpha \to X$.

When $X$ is a free $R$-module, we define the Gram matrix of $X$ with respect to a basis $\{ x_1, \ldots, x_n \}$ for $X$ as a symmetric matrix $G_X:=\begin{pmatrix}
    B(x_i,x_j)
\end{pmatrix} \in \operatorname{Mat}_n(R)$.
The discriminant $dX$ of $X$ is the square class $(\det G_X)(R^\times)^2$, regarded as an element in the quotient monoid $R / (R^\times)^2$.
We conventionally identify a free quadratic $R$-module $X$ with its Gram matrix $G_X$, and write $X \cong G_X$.
When $G_X=\begin{pmatrix}
    B(x_i,x_j)
\end{pmatrix}$ is diagonal, i.e. $B(x_i,x_j)=0$ for any $i \ne j$, we simply write $X \cong \left< Q(x_1), \ldots, Q(x_n) \right>$.
By abuse of notation, we may also write $X \cong \left< Q(x_1)(R^\times)^2, \ldots, Q(x_n)(R^\times)^2 \right>$. For instance, when $X$ is a line, we write $X \cong \left< dX \right>$.
We denote by $I_n$ the $n\times n$ identity matrix.

Let $(F, \co[F])$ stand for either $(K, \co)$ or $(K_\fp, \co[\fp])$ for a finite place $\fp$ on $\co$.
A quadratic $F$-module is called a quadratic $F$-space. A quadratic $\co[F]$-module is called an $\co[F]$-lattice.
Let $L$ be an $\co[F]$-lattice.
The rank of $L$ is defined as the $F$-dimension of the quadratic space $L\otimes F$.
If $L$ is an $\co[F]$-submodule of a quadratic $F$-space $V$, it is called a lattice in $V$.
If in addition $L \otimes F = V$, then it is called a lattice on $V$.
The scale $\fs L$ of $L$ is the set $B(L,L)$, which is an ideal of $\co[F]$. The norm $\fn L$ of $L$ is an ideal of $\co[F]$ generated by $Q(L)$.
An $\co[F]$-lattice $L$ of rank $n$ can be written as $L=\mathfrak{a}_1 v_1 + \dotsb + \mathfrak{a}_n v_n$, where $v_1$, \dots, $v_n$ are vectors in $L\otimes K$ and $\mathfrak{a}_1$, \dots, $\mathfrak{a}_n$ are fractional ideals in $K$.
Then its volume $\mathfrak{v}L$ is defined by the set $\mathfrak{v}L:=\det \begin{pmatrix}
    B(v_i,v_j)
\end{pmatrix} \cdot \mathfrak{a}_1^2 \dotsm \mathfrak{a}_n^2$, which is an ideal of $\co[F]$.
It is clear that $\mathfrak{v}L \subseteq (\fs L)^n$.
When $\mathfrak{v}L = (\fs L)^n$, we say that $L$ is $\fs L$-modular, and especially when $\fs L = \co[F]$, $L$ is called unimodular.

We say that an $\co[F]$-lattice $L$ is a direct sum of sublattices $L_1$, \dots, $L_r$ if it is their direct sum as $\co[F]$-modules.
Suppose $L$ is a direct sum $L = L_1 \oplus \dotsb \oplus L_r$ with $B(L_i, L_j) = 0$ for $1 \le i < j \le r$. We then say that $L$ is the orthogonal sum of the sublattices $L_1$, \dots, $L_r$ or that $L$ has the orthogonal splitting $L = L_1 \mathbin{\perp} \dotsb \mathbin{\perp} L_r$.
We say that a sublattice $L'$ splits $L$, or that $L'$ is a component of $L$, if there exists a sublattice $L''$ of $L$ with $L = L' \mathbin{\perp} L''$. A unimodular sublattice is always a component \cite[Corollary~82:15a]{OMeara}.
For square matrices $A$ and $A'$, the symbol $A \mathbin{\perp} A'$ represents a block diagonal square matrix of the form
\[
 \begin{pNiceMatrix}[hlines,vlines] A & 0 \\ 0 & A' \end{pNiceMatrix}\text.
\]
Hence, for free $\co[F]$-lattices $L$ and $L'$, we have $L \mathbin{\perp} L' \cong G_L \mathbin{\perp} G_{L'}$.

We denote by $K^+$ and $\co^+$ the set of totally positive elements in $K$ and $\co$, respectively, and write $\alpha \succ 0$ if $\alpha \in K^+$.
We say that an $\co$-lattice $L$ is \emph{positive definite} if $Q(v) \in K^+$ for all nonzero $v \in L$.
We will always assume that an $\co$-lattice is positive definite.
An $\co$-lattice $L$ is \emph{$n$-universal} if it represents every $\co$-lattice of rank $n$.

Given an $\co$-lattice $L$, the \emph{localisation of $L$ at $\fp$} is an $\co[\fp]$-lattice $L_\fp := L \otimes \co[\fp]$ which inherits a bilinear form and quadratic form extending those on $L$.
If an $\co$-lattice $L$ is positive definite (which we assume), then $L_\fp$ is nondegenerate.
Therefore, we will always assume that an $\co[\fp]$-lattice is nondegenerate.
Note that any $\co[\fp]$-lattice is free.

Let $L$ and $L'$ be $\co$-lattices.
We say that $L'$ is represented by $L$ over $\co[\fp]$ if $L'_\fp \to L_\fp$.
If this holds for all finite places $\fp$ of $\co$, we say that $L'$ is locally represented by $L$.
Moreover, we say that $L'$ is locally isometric to $L$ if $L'_\fp \cong L_\fp$ for all finite places $\fp$ of $\co$.
An $\co$-lattice $L$ is locally $n$-universal if it locally represents every $\co$-lattice of rank $n$.

The genus of an $\co$-lattice $L$ is defined as the set of all $\co$-lattices on $L \otimes K$ that are locally isometric to $L$, denoted by
\[
 \gen L := \{ L' \subseteq L \otimes K \mid L'_\fp \cong L_\fp \text{ for all finite places }\fp\}\text.
\]
If $L'$ is locally represented by $L$, then $L'$ is represented by some $\co$-lattice $\Lambda \in \gen L$.
It is well known that $\gen L$ is finite up to isometry.
The number of distinct isometry classes included in $\gen L$ is called the class number of $L$ and will be denoted by $\cl_K (L)$.
The Siegel mass constant $\mass{K}{L}$ is defined as \cite{SiegelI}:
\[
    \mass{K}{L} := \sum_{\Lambda \in \gen(L)} \frac{1}{ \card{ \Aut(\Lambda) } },
\]
where the sum runs over all inequivalent lattices $\Lambda$ in the genus of $L$.

An $\co$-lattice $L$ is called decomposable if there exist $\co$-lattices $L_1$, $L_2$ such that $L = L_1 \mathbin{\perp} L_2$ and $L_1 \ne 0 \ne L_2$. Otherwise, it is \emph{indecomposable}.
It is clear that any $\co$-lattice can be written as an orthogonal sum of indecomposable $\co$-lattices.
Since we assume that any $\co$-lattice is positive definite, the components of an indecomposable splitting are unique \cite[Theorem~105:1]{OMeara}.

An $\co$-lattice $L$ is called additively decomposable if there exist $\co$-lattices $L_1$, $L_2$ such that $L \subseteq L_1 \mathbin{\perp} L_2$ and $\Pr_1(L) \ne 0 \ne \Pr_2(L)$, where $\Pr_i$ denotes the canonical orthogonal projection from $L_1 \mathbin{\perp} L_2$ onto $L_i$ for $i = 1$, $2$. Otherwise, it is called \emph{additively indecomposable}. It is clear that a decomposable $\co$-lattice is additively decomposable. The converse also holds for unimodular lattices. Hence, a unimodular $\co$-lattice is decomposable if and only if it is additively decomposable (e.g. see \cite[p.~275]{Plesken}).

\section{Locally $n$-universal $\co$-lattices}  \label{sec:localstuff}

When $(K, \co) = (\Q, \Z)$, the lattice $I_{n+3}$ is locally $n$-universal, giving the upper bound $\MRU{K}{n} \le (n+3) \cl_K (I_{n+3})$. However, for totally real number fields $K$, the lattice $I_{n+3}$ is in general no longer locally $n$-universal, as illustrated in Example~\ref{not loc n univ}. Nevertheless, if there exists a finite list of unimodular $\co$-lattices of rank $m$ such that each $\co$-lattice of rank $n$ is represented by a lattice in the list, then $m \ge n+3$ (Lemma~\ref{min rank rep uni global}). Moreover, each $\co$-lattice of rank $n$ is represented by a unimodular $\co$-lattice on the quadratic $K$-space $I_{n+3}\otimes K$ (Proposition~\ref{dyadic In+3}). The number of unimodular genera on $I_{n+3}\otimes K$ is bounded above by a constant exponential in $d = [K:\Q]$ independently of the rank $n$ (Proposition~\ref{upper bound num genera}).

\begin{example}\label{not loc n univ}
 Suppose that $I_5$ is locally $2$-universal. Then $I_5$ is locally $1$-universal. Then by \cite[Theorem~3]{HsiaKitaokaKneser1978}, an integer in $\co$ of sufficiently large norm (or trace) must be represented by $I_5$ globally. However, if $K = \Q(\sqrt{m})$ for squarefree positive $m\equiv 2$, $3\pmod4$, then any integer in $\Z[\sqrt{m}]$ of the form $a + b\sqrt{m}$, where $a\in\Z$ and $b$ is odd, cannot be represented by any sum of squares, which gives rise to a contradiction.
\end{example}

Let $\fp$ be a finite place on $\co$.
We fix a prime $\pi_\fp \in \fp \setminus \fp^2$ and a nonsquare unit $\Delta_\fp$ in $\co$ that has the form $\Delta_\fp = 1 - 4\rho_\fp$ for some unit $\rho_\fp$ in $\co[\fp]$.
Recall that we denote by $K_\fp$ and $\co[\fp]$ the completion of $K$ and $\co$ at $\fp$, respectively.
Let $L$ be a (nondegenerate) $\co[\fp]$-lattice. We call $L$ \emph{classically maximal} if and only if, for any $\co[\fp]$-lattice $M$, $L \subseteq M \subset FL$ implies that $M = L$. Clearly, for any $\co[\fp]$-lattice $L$, there exists a classically maximal $\co[\fp]$-lattice $M$ such that $L \subseteq M \subset FL$.

\begin{lemma}\label{classify cla max}
Let $L$ be an $\co[\fp]$-lattice. $L$ is classically maximal if and only if one of the following conditions hold:

\begin{enumerate}[\textup{(\roman{enumi})}]
\item $\operatorname{ord}_\fp dL \le 1$;
\item when $\fp$ is nondyadic, $L \cong L_0 \mathbin{\perp} \left< \pi_\fp, -\pi_\fp \Delta_\fp \right>$, where $L_0$ is unimodular or zero.
\end{enumerate}
\end{lemma}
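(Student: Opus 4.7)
The plan is to prove the two directions separately.

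For sufficiency, assume $L$ satisfies (i) or (ii) and suppose for contradiction that $L \subsetneq M \subset FL$ is a proper enlargement by an $\co[\fp]$-lattice. Pick $y \in M \setminus L$ with $w := \pi_\fp y \in L$; then $w \in L \setminus \pi_\fp L$, and the integrality of $M$ (i.e., $\fs M \subseteq \co[\fp]$) translates into $B(w, L) \subseteq \fp$ and $Q(w) \in \fp^2$. Under (i), the inclusion $L \subsetneq M$ forces $\ord_\fp dL = \ord_\fp dM + 2 \ord_\fp [M:L] \geq 0 + 2 = 2$, contradicting $\ord_\fp dL \leq 1$. Under (ii), decompose $w = w_0 + \alpha e_1 + \beta e_2$ with respect to $L = L_0 \perp \langle e_1, e_2 \rangle$; integrality against $L_0$ together with unimodularity of $L_0$ forces $w_0 \in \pi_\fp L_0$, so $Q(w_0) \in \fp^2$, whence $Q(w) \in \fp^2$ reduces to $\alpha^2 \equiv \beta^2 \Delta_\fp \pmod{\fp}$. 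Since $\Delta_\fp$ is a nonsquare unit modulo $\fp$ at nondyadic $\fp$, this forces $\alpha, \beta \in \fp$; thus $w \in \pi_\fp L$ and $y \in L$, a contradiction.

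For necessity, assume $L$ satisfies neither (i) nor (ii). I aim to produce $w \in L \setminus \pi_\fp L$ with $B(w, L) \subseteq \fp$ and $Q(w) \in \fp^2$, from which $M := L + \co[\fp] \pi_\fp^{-1} w$ is a proper enlargement of $L$ by an $\co[\fp]$-lattice. Take the Jordan decomposition $L = L_0 \perp L_1 \perp L_2 \perp \cdots$ with $L_i$ being $\pi_\fp^i$-modular (possibly zero). If any $L_i$ with $i \ge 2$ is nonzero, then any primitive vector in $L_i$ suffices, since $\fs L_i \subseteq \fp^2$. Otherwise $L = L_0 \perp L_1$, and since $\ord_\fp dL \ge 2$ we have $\rank L_1 \ge 2$; write $L_1 = \pi_\fp U$ for a unimodular $U$ of rank $\ge 2$, so that the goal becomes finding a primitive $v \in U$ with $Q_U(v) \in \fp$. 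At nondyadic $\fp$, this is guaranteed whenever $\rank U \ge 3$ (by Chevalley--Warning applied to the residue-field reduction $U/\fp U$, a nondegenerate form in $\ge 3$ variables) or when $U$ is an isotropic binary. The only remaining case is $U \cong \langle 1, -\Delta_\fp \rangle$, giving $L_1 \cong \langle \pi_\fp, -\pi_\fp \Delta_\fp \rangle$; but this is precisely (ii), contradicting our assumption.

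At dyadic $\fp$, I would carry out the analogous analysis via the dyadic Jordan decomposition of $L_1$, which admits scaled rank-$1$ orthogonal sums, scaled hyperbolic planes, and scaled improper binary forms such as $\pi_\fp \left(\begin{smallmatrix} 2\rho_\fp & 1 \\ 1 & 2 \end{smallmatrix}\right)$. The key simplification is that $\Delta_\fp = 1 - 4\rho_\fp \equiv 1 \pmod{\fp}$ is a square modulo $\fp$ at dyadic places (equivalently, Frobenius is bijective in characteristic $2$), so the binary anisotropy obstruction that gives rise to (ii) at nondyadic places does not occur, and in each modular component one directly produces a primitive enlarging vector. The main obstacle I expect is this dyadic bookkeeping: enumerating all possible shapes of rank-$2$ $\pi_\fp$-modular components (including those with $\fn \subsetneq \fs$) and verifying integrality of $Q(\pi_\fp^{-1} w)$ in each, given the potentially different scale and norm ideals at dyadic primes.
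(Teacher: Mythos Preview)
The paper leaves this proof to the reader as ``quite straightforward'', and your approach is exactly the expected one: a discriminant--index count for (i), the nonsquare-residue obstruction for (ii), and Jordan splitting together with a residue-field argument for necessity. The sufficiency direction and the nondyadic half of necessity are complete and correct as written.

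The one place you hedge is dyadic necessity, where you propose enumerating the possible shapes of rank-$2$ $\fp$-modular components and worry about norm--scale bookkeeping. This case analysis is unnecessary. Once you have reduced to a unimodular $U$ of rank $\ge 2$, pick any two basis vectors $e_1, e_2$; since $2 \in \fp$ at a dyadic place, one has
\[
Q_U(e_1 + t e_2) \;\equiv\; Q_U(e_1) + t^2\, Q_U(e_2) \pmod{\fp}
\]
for every $t \in \co[\fp]$. If $Q_U(e_2) \in \fp$, take $v = e_2$; otherwise $Q_U(e_2)$ is a unit and, by surjectivity of Frobenius on the residue field, one solves $t^2 \equiv -Q_U(e_1)/Q_U(e_2) \pmod{\fp}$ to obtain a primitive $v = e_1 + t e_2$ with $Q_U(v) \in \fp$. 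This is precisely your Frobenius observation made explicit, and it closes the dyadic case in two lines with no structural classification needed.
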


\begin{proof}
The proof is quite straightforward, so it is left to the reader.
\end{proof}

\begin{lemma}\label{min rank rep uni local}
Let $n$ be a fixed positive integer. Consider the following statement $P(m)$: any $\co[\fp]$-lattice of rank $n$ is represented by some unimodular $\co[\fp]$-lattice of rank $m$. If $n=1$ or $\fp$ is dyadic, then $P(m)$ holds if and only if $m \ge n+1$. Otherwise, $P(m)$ holds if and only if $m \ge n+2$.
\end{lemma}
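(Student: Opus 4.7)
The plan is to treat the ``if'' and ``only if'' directions separately, both hinging on the classification of classically maximal lattices given by Lemma~\ref{classify cla max}.

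For the ``if'' direction, given any $\co[\fp]$-lattice $L$ of rank $n$ with integer scale, I would first pass to a classically maximal $L' \supseteq L$ of the same rank; it then suffices to embed $L'$ into a unimodular lattice of the claimed rank. If $L'$ is itself unimodular (case (i) of Lemma~\ref{classify cla max} with $\ord_\fp dL' = 0$), then $m = n$ already works. If $L'$ is in case (i) with $\ord_\fp dL' = 1$, then since $\sum_i i \cdot \rank(L_i) = 1$, the Jordan decomposition is forced to be $L' \cong L'_0 \perp \langle \pi_\fp u \rangle$ with $L'_0$ unimodular of rank $n-1$; I then embed the rank-$1$ piece $\langle \pi_\fp u \rangle$ into a unimodular binary lattice --- into $\langle 1, -1 \rangle$ in the nondyadic case via the identity $\pi_\fp u = ((\pi_\fp u + 1)/2)^2 - ((\pi_\fp u - 1)/2)^2$ (using that $2$ is a unit), and into the non-diagonal form $A(\pi_\fp u, 0) = \left(\begin{smallmatrix} \pi_\fp u & 1 \\ 1 & 0 \end{smallmatrix}\right)$ in the dyadic case --- yielding an embedding of $L'$ into a unimodular lattice of rank $n+1$. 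If $\fp$ is nondyadic and $L'$ falls under case (ii), so $L' = L'_0 \perp \langle \pi_\fp, -\pi_\fp \Delta_\fp \rangle$ with $L'_0$ unimodular of rank $n-2$, then embedding $\langle \pi_\fp, -\pi_\fp \Delta_\fp \rangle$ into $\langle 1, -1, 1, -1 \rangle$ via the same difference-of-squares trick yields an embedding of $L'$ into a unimodular lattice of rank $n+2$.

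For the ``only if'' direction, I would exhibit specific rank-$n$ lattices that witness the claimed lower bound. When $n = 1$, take $L = \langle \pi_\fp \rangle$: any embedding into a rank-$1$ unimodular $\langle u \rangle$ would force $\pi_\fp/u$ to be a square in $\co[\fp]$, which is impossible. When $\fp$ is dyadic and $n \geq 2$, take $L = \langle \pi_\fp \rangle \perp \langle 1 \rangle^{n-1}$: for any same-rank embedding into a unimodular $M$, the volume identity $[M:L]^2 \mathfrak{v}M = \mathfrak{v}L$ gives $[M:L]^2 = \pi_\fp \co[\fp]$, which has no solution since $\pi_\fp$ has odd $\fp$-adic valuation. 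When $\fp$ is nondyadic and $n \geq 2$, take $L = \langle \pi_\fp, -\pi_\fp \Delta_\fp \rangle \perp \langle 1 \rangle^{n-2}$ and suppose $L \hookrightarrow M$ with $M$ unimodular of rank $n+1$; the unimodular sublattice $\langle 1 \rangle^{n-2}$ splits off $M$ by \cite[Corollary~82:15a]{OMeara}, reducing us to the claim that $\langle \pi_\fp, -\pi_\fp \Delta_\fp \rangle$ does not embed into any rank-$3$ unimodular $M'$. Reducing modulo $\fp$, the images $\bar v_1, \bar v_2$ of the basis vectors are nonzero isotropic and mutually orthogonal in the nondegenerate rank-$3$ space $M'/\fp M'$ over the residue field; if they were linearly dependent with $\bar v_2 = c \bar v_1$, then writing $v_2 - \tilde c v_1 = \pi_\fp w$ and computing $Q(v_2 - \tilde c v_1) = \pi_\fp(\tilde c^2 - \Delta_\fp)$ forces $\tilde c^2 \equiv \Delta_\fp \pmod \fp$, contradicting that $\Delta_\fp$ is a nonsquare unit. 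Hence $\bar v_1, \bar v_2$ are independent, giving a $2$-dimensional totally isotropic subspace in a nondegenerate rank-$3$ form over a finite field of odd characteristic, whose Witt index is at most $1$ --- a contradiction.

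The main technical obstacle is the dyadic case in the ``if'' direction: unimodular $\co[\fp]$-lattices are no longer diagonalisable and the difference-of-squares trick fails because $2 \in \fp$, so one must use the non-diagonal form $A(\pi_\fp u, 0)$, whose unimodularity (from $\det = -1$) and representation of $\pi_\fp u$ (via $Q(e_1) = \pi_\fp u$) need to be checked explicitly. A secondary delicate point is the nondyadic ``only if'' direction, where a naive mod-$\fp$ Witt index argument applied to a purely $\pi_\fp$-modular rank-$n$ sublattice would only yield the weak bound $m \geq 2n$; the reduction to the binary case $n = 2$ via the splitting of unimodular sublattices is precisely what recovers the sharp bound $m \geq n+2$.
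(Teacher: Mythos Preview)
Your proposal is correct and follows essentially the same architecture as the paper's proof: reduce to classically maximal lattices via Lemma~\ref{classify cla max}, embed each Jordan piece into a small unimodular lattice, and for the lower bound split off the unimodular part to reduce to a ternary obstruction. The only notable difference is in how you derive the contradiction in the nondyadic $n\ge 2$ case: the paper argues over $K_\fp$ that $FM_0 \cong \langle \pi_\fp, -\pi_\fp\Delta_\fp, -\Delta_\fp\, dM_0\rangle$ is anisotropic while any rank-$3$ unimodular space is isotropic, whereas you reduce mod~$\fp$ and use a Witt-index bound over the residue field. Your route is arguably more self-contained (it avoids invoking the anisotropy of that specific ternary form over $K_\fp$), while the paper's route is quicker once the local classification is in hand; both rest on the same splitting step and the same witness lattice $\langle 1\rangle^{n-2}\perp\langle \pi_\fp, -\pi_\fp\Delta_\fp\rangle$. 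For the ``if'' direction the paper uses $\left(\begin{smallmatrix}\pi\epsilon & 1\\ 1 & 0\end{smallmatrix}\right)$ uniformly at all primes rather than your difference-of-squares embedding into $\langle 1,-1\rangle$ at nondyadic primes, but this is cosmetic.
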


\begin{proof}
Let $N$ be $n+1$ if $n=1$ or $\fp$ is dyadic, and $n+2$ otherwise.
Clearly, $P(m)$ implies $P(m+1)$.
Hence, it suffices to show that $\neg P(N-1)$ and $P(N)$.
First, we prove $P(N)$.
Clearly, $P(m)$ is equivalent to the statement $P'(m)$: any classically maximal $\co[\fp]$-lattice of rank $n$ is represented by some unimodular $\co[\fp]$-lattice of rank $m$. For any unimodular $\co[\fp]$-lattice $L_0$ and units $\epsilon_1$, \dots, $\epsilon_s$ in $\co[\fp]$, we have
\[
 L_0 \mathbin{\perp} \langle \pi\epsilon_1, \dots, \pi\epsilon_s \rangle \to L_0 \mathbin{\perp} \begin{pmatrix} \pi\epsilon_1 & 1 \\ 1 & 0 \end{pmatrix} \mathbin{\perp} \dotsb \mathbin{\perp} \begin{pmatrix} \pi\epsilon_s & 1 \\ 1 & 0 \end{pmatrix}\text.
\]
By Lemma~\ref{classify cla max} and the above representation, we conclude that $P'(N)$.

Now, we prove that $\neg P(N-1)$.
By discriminant arguments, $\langle 1, \dots, 1, \pi_\fp \rangle$ is not represented by any unimodular lattice of rank $n$.
Hence, we have $\neg P(n)$.
This completes the proof for the case where $n=1$ or $\fp$ is dyadic.
Now, assume that $n\ge 2$ and $\fp$ is nondyadic.
It suffices to prove that $\neg P(n+1)$.
Consider a lattice $L$ of the form $L \cong L_0 \mathbin{\perp} \langle \pi_\fp, -\pi_\fp \Delta_\fp \rangle$, where $L_0$ is a unimodular lattice or zero.
Suppose, by contradiction, that $M$ is a unimodular lattice of rank $n+1$ that represents $L$.
Since $M$ represents $L_0$, we may write $M \cong L_0 \mathbin{\perp} M_0$ for some unimodular sublattice $M_0$ of $M$.
Then $M_0$ must represent $\langle \pi_\fp, -\pi_\fp \Delta_\fp \rangle$, which implies $FM_0 \cong \langle \pi_\fp, -\pi_\fp \Delta_\fp, -\Delta_\fp dM_0 \rangle$, an anisotropic space.
On the other hand, since $M_0$ is unimodular of $\rank M_0 = 3$, $FM_0$ is isotropic.
This contradiction completes the proof.
\end{proof}

\begin{lemma}\label{min rank rep uni global}
Let $n$ be a fixed positive integer.
\begin{enumerate}[\textup{(\alph{enumi})}]
\item Let $m \le n+1$ be a positive integer. For any finite set $S$ of $\co$-lattices of rank $m$, there exists an $\co$-lattice of rank $n$ that is not represented by any lattice in $S$.
\item Let $n\ge 3$ and let $m \le n+2$ be a positive integer. For any finite set $S$ of unimodular $\co$-lattices of rank $m$, there exists an $\co$-lattice of rank $n$ that is not represented by any lattice in $S$.
\end{enumerate}
\end{lemma}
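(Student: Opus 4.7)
Both parts proceed by constructing a rank $n$ $\co$-lattice $M$ not represented by any $L \in S$, after reducing via orthogonal padding by copies of $\langle 1 \rangle$ to the case where every $L \in S$ has rank equal to the upper bound ($n+1$ for (a), $n+2$ for (b), preserving unimodularity in the latter), since $M \not\to L \mathbin{\perp} I_{N-m}$ implies $M \not\to L$.

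For (a) with $n \ge 2$, pick a nondyadic prime $\fp$ of $\co$ not dividing $2$, the scale, or the volume of any (padded) $L \in S$, so that each $L_\fp$ is unimodular of rank $n+1$.  By the negation $\neg P(n+1)$ established in the proof of Lemma~\ref{min rank rep uni local}, the rank $n$ local lattice $L_0 \mathbin{\perp} \langle \pi_\fp, -\pi_\fp \Delta_\fp \rangle$ (with $L_0$ unimodular of rank $n-2$, taken as the zero lattice when $n = 2$) is not represented by any rank $n+1$ unimodular $\co[\fp]$-lattice.  A positive definite global rank $n$ $\co$-lattice $M$ with this localization at $\fp$ exists by a standard weak approximation / patching argument, and it satisfies $M \not\to L$ for every $L \in S$.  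The case $n = 1$ requires a separate argument, since $P(2)$ holds at every prime: positive definiteness of each padded rank 2 $L_i$ forces $-dL_i \in K^\times \setminus (K^\times)^2$, so the compositum $K(\sqrt{-dL_1}, \ldots, \sqrt{-dL_k})/K$ is a nontrivial finite extension.  By Chebotarev's density theorem, a positive density of nondyadic primes $\fp$ of $\co$ are inert in every $K(\sqrt{-dL_i})/K$.  Diagonalizing $L_{i, \fp} \cong \langle u, v \rangle$ at such a $\fp$, the condition that $-uv$ is a non-square unit precludes leading-term cancellation in $ux^2 + vy^2$, so $L_{i, \fp}$ only represents elements of even $\fp$-valuation; weak approximation then yields $\alpha \in \co^+$ with $\ord_\fp \alpha$ odd, and $\langle \alpha \rangle$ is not represented by any $L_i$.

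For (b), no such local obstruction is available since $P(n+2)$ holds at every prime.  Instead, use that $M \to L_i$ requires the $K$-space embedding $M \otimes K \hookrightarrow L_i \otimes K$, so $L_i \otimes K \cong (M \otimes K) \mathbin{\perp} W_i$ for some binary $K$-space $W_i$ whose local discriminant and Hasse invariant at any place $v$ are determined by those of $L_i \otimes K_v$ and $M \otimes K_v$.  Using the hypothesis $n \ge 3$, construct $M = \langle \beta_1, \beta_2, \beta_3 \rangle \mathbin{\perp} I_{n-3}$ with $\beta_1, \beta_2, \beta_3 \in \co^+$ chosen so that at some finite place $v$ of $K$: (i) the discriminants $dL_i$ all lie in a common square class modulo $(K_v^\times)^2$ (achieved for a positive density of $v$ by Chebotarev applied to the compositum $K(\sqrt{dL_i/dL_1} : i)/K$); (ii) $\beta_1 \beta_2 \beta_3 \equiv -dL_1 \pmod{(K_v^\times)^2}$, so that the putative complement $W_{i, v}$ has discriminant $\equiv -1$ at $v$, forcing it to be isotropic and hence of Hasse invariant $+1$; and (iii) the Hilbert symbols $(\beta_i, \beta_j)_v$ combine (using the Hasse multiplicativity $S_v(L_i \otimes K) = S_v(M) S_v(W_i) (dM, dW_i)_v$ and the identity $(\alpha, \alpha)_v = (\alpha, -1)_v$) to force the required Hasse invariant of $W_{i, v}$ to be $-1$.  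The Hasse-invariant contradiction then shows $M \otimes K \not\hookrightarrow L_i \otimes K$, so $M \not\to L_i$ for any $L_i \in S$.

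The main technical hurdle lies in (b): the simultaneous realization of the square-class and Hilbert-symbol conditions against all $L_i$ at a single place $v$, together with the totally positive global existence of the $\beta_i$, requires combining Chebotarev's theorem (to collapse the discriminants $dL_i$ to a common square class at $v$) with weak approximation (to produce the $\beta_i \in \co^+$ realizing the prescribed local data), and relies crucially on the unimodularity of the $L_i$ to control the local Hasse invariants $S_v(L_i \otimes K_v)$ at a suitably chosen nondyadic $v$.
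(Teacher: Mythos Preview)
Your argument for part~(a) with $n\ge 2$ matches the paper's exactly. For $n=1$ your endgame (a single prime plus weak approximation to produce $\alpha\in\co^+$ of odd valuation) is a clean simplification of the paper's Hilbert-class-field trick with two principal primes. However, there is a small gap: you assert that Chebotarev gives a positive density of primes inert in \emph{every} $K(\sqrt{-dL_i})/K$, but this is not automatic---in $\Gal(K(\sqrt{-dL_1},\dots,\sqrt{-dL_k})/K)\cong(\Z/2)^r$ there need not a priori exist an element nontrivial on every quadratic subextension. The missing observation (which the paper makes explicit) is that complex conjugation, pulled back through any embedding into $\C$, fixes the totally real base and negates each $\sqrt{-dL_i}$ since $dL_i\succ 0$; this supplies the required Frobenius target.

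For part~(b) your route is a genuine variant. The paper invokes Humbert reduction to list \emph{all} unimodular $\co$-lattices $L_1,\dots,L_t$ of rank $m$, then assigns to each $L_i$ its own nondyadic place $\fp_i$ (with $-1$ nonsquare) and builds the single explicit lattice $\langle 1,\dots,1,\alpha,\beta,\beta\rangle$ via weak approximation so that $\alpha\equiv -dL_i$ and $\beta\equiv\pi_i$ at $\fp_i$; the obstruction at $\fp_i$ is then a short lattice-level/Hasse-invariant check. You instead work at one place $v$, using Chebotarev to collapse all the $dL_i$ into a common square class there, and then run a Hasse-invariant contradiction on the binary complement $W_{i,v}$. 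Both reach the same local picture (your $\beta_1=\beta_2\equiv\pi_v$, $\beta_3\equiv -dL_1$ recovers exactly the paper's lattice), and both are ultimately local obstructions---so your sentence ``no such local obstruction is available'' is a bit misleading; what fails is only a \emph{uniform} obstruction against all rank-$(n+2)$ unimodular local lattices simultaneously. The paper's multi-prime approach buys you freedom from the Chebotarev collapse step and from having to align Hilbert-symbol conditions against several $L_i$ at once; your single-prime approach avoids the appeal to Humbert finiteness and keeps the argument at the level of the given finite set $S$.
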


\begin{proof}
(a) We claim that the lemma (a) for $m$ implies the lemma (a) for $m-1$. Assume that there exists a finite set $S$ of $\co$-lattices $m-1$ such that any $\co$-lattice of rank $n$ is represented by some lattice in $S$. Then the set $\{ L \mathbin{\perp} \langle 1 \rangle \mid L \in S \}$ satisfies the analogous property for $m$, which proves the contrapositive of our claim. Hence, it suffices to prove the lemma for $m = n+1$. Moreover, we may assume that each lattice in $S$ is nondegenerate.

First, we assume $n=1$. Then $m=2$. Suppose that
\[
 S \otimes K = \{L \otimes K \mid L\in S\} = \{\langle a_1, a_1 d_1 \rangle, \dots, \langle a_s, a_s d_s \rangle\}\text,
\]
where $a_i$, $d_i \in K^+$ for $1\le i\le s$. Let $H$ be the Hilbert class field of $K$. Then $H$ is also totally real. Define $E := H(\sqrt{-d_1}, \dots, \sqrt{-d_s})$ and consider any embedding $\sigma: E \hookrightarrow \mathbb{C}$. Clearly, $\sigma \in \operatorname{Gal}(E/K)$, $\sigma |_H = \operatorname{id}_H$, and $\sigma(\sqrt{-d_i}) = -\sqrt{-d_i}$ for $1\le i\le s$. Hence, by Chebotarev density  (e.g. see \cite[p.~545]{Neukirch}), there are infinitely many nondyadic prime elements $\pi$ in $K$ such that for $1\le i\le s$, $-d_i$ is a nonsquare unit at $\pi$ and $a_i$ is a unit at $\pi$. Hence, by Pigeonhole Principle, there exist two such prime elements $\pi$ and $\pi'$ in $K$ of the same signature. Let $\Delta_\pi$ be any nonsquare unit at $\pi$. Then clearly, any quadratic $K$-space in $S \otimes K$ is isometric to $\langle 1, -\Delta_\pi \rangle$ as $K_\pi$-spaces. Therefore, $\langle \pi \pi' \rangle$ is a quadratic $K$-space that is not represented by any space in $S \otimes K$. Hence, any lattice on the space is not represented by any lattice in $S$, as required.

Next, we assume $n\ge 2$. There exists a nondyadic prime element $\pi$ in $K$ such that for all lattices $L\in S$, $L\otimes \co[\pi]$ is unimodular. Now, by Lemma~\ref{min rank rep uni local}, there exists an $\co[\pi]$-lattice of rank $n$ that is not represented by any unimodular $\co[\pi]$-lattice of rank $n+1$. Hence, there exists an $\co$-lattice of rank $n$ that is not represented by any lattice in $S$.

(b) Thanks to the Humbert reduction theory (e.g. see \cite{Humbert1940, ChanOh2023}), we know that there are only finitely many unimodular $\co$-lattices of rank $m$ up to isometry. Let $t$ be the number of such isometry classes $\{L_1, \ldots, L_t\}$. Fix $t$ distinct nondyadic places $\fp_1$, \dots, $\fp_t$ on $K$ such that $-1$ is nonsquare at $\fp_i$. 
For each $i$ with $1\le i\le t$, fix a prime element $\pi_i \in \fp_i \setminus \fp_i^2$.
By the Weak Approximation Theorem \cite[Theorem~11:8]{OMeara}, we can find $\alpha$, $\beta\in \co^+$ such that $\alpha \equiv -dL_i \pmod{\fp_i}$ and $\beta\equiv \pi_i \pmod{\fp_i^2}$ for $1\le i\le t$.
Then the $\co$-lattice $\left< 1, \ldots, 1, \alpha, \beta, \beta\right>$ of rank $n$ is not represented by any lattice in $S$.
\end{proof}

\begin{lemma}\label{dyadic uni}
Let $\fp$ be dyadic and let $V$ be a nondegenerate quadratic space over $K_\fp$. Then $\operatorname{ord}_\fp dV \subset 2\Z$ if and only if $V$ admits a unimodular $\co[\fp]$-lattice on it.
\end{lemma}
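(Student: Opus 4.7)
The $(\Leftarrow)$ direction is immediate: any $\co[\fp]$-basis of a unimodular $\co[\fp]$-lattice $L$ on $V$ yields a Gram matrix of unit determinant, so $dV = dL$ has $\ord_\fp dV = 0 \in 2\Z$.

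For $(\Rightarrow)$, my plan is to apply the Witt decomposition $V = V_0 \perp H^{\perp r}$, where $H$ denotes the hyperbolic plane (which admits the unimodular lattice $\begin{pmatrix} 0 & 1 \\ 1 & 0 \end{pmatrix}$ and satisfies $\ord_\fp dH = 0$) and $V_0$ is anisotropic of dimension at most four, as is standard over the local field $K_\fp$. It therefore suffices to produce a unimodular $\co[\fp]$-lattice on the anisotropic kernel $V_0$, whose discriminant valuation still lies in $2\Z$.

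I would case-analyse on $n := \dim V_0 \in \{0, 1, 2, 3, 4\}$. The cases $n = 0, 1$ are trivial: for $n = 1$ write $V_0 \cong \langle a \rangle$ with $\ord_\fp a$ even and rescale the generator by a suitable power of $\pi_\fp$ to obtain $\langle u \rangle$ with $u$ a unit. For $n = 2$, $V_0$ has unit discriminant, so by the classification of rank-two quadratic spaces over $K_\fp$ via discriminant and Hasse invariant, $V_0$ is isometric either to a diagonal form $\langle u_1, u_2 \rangle$ with $u_1, u_2 \in \co[\fp]^\times$ or to the anisotropic plane $\begin{pmatrix} 2 & 1 \\ 1 & 2 \rho_\fp \end{pmatrix}$ (the form appearing in Lemma~\ref{classify cla max}); in both cases the indicated matrix is itself a unimodular lattice. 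For $n = 3$, any diagonalisation of $V_0$ must contain a diagonal entry of even valuation (three entries with even total valuation force an odd number of odd-valuation entries, hence an even one), so $V_0$ represents a unit $u$, and splitting off $\langle u \rangle$ reduces to the $n = 2$ case with still-even discriminant valuation. For $n = 4$, the space $V_0$ is isometric (uniquely, up to isometry over $K_\fp$) to the norm form of the quaternion division algebra over $K_\fp$, which admits an explicit unimodular $\co[\fp]$-lattice coming from a Lipschitz-type $\co[\fp]$-order on this algebra. Assembling orthogonal sums of the unimodular lattices obtained for each summand yields a unimodular $\co[\fp]$-lattice on $V$.

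The main technical obstacle is the rank-two anisotropic subcase, where the unimodular lattice must be taken as the non-diagonal form $\begin{pmatrix} 2 & 1 \\ 1 & 2\rho_\fp \end{pmatrix}$; establishing this requires invoking the local classification of $2$-dimensional quadratic spaces over the dyadic field $K_\fp$, which is considerably more intricate than its nondyadic counterpart and is the only step where the dyadic hypothesis genuinely enters the argument.
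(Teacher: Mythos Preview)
Your overall strategy via Witt decomposition is sound, and the cases $n\in\{0,1,3\}$ are handled correctly. However, there is a genuine gap in the $n=2$ case.

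Your claimed dichotomy --- that a $2$-dimensional anisotropic space $V_0$ with unit-valuation discriminant is either $\langle u_1,u_2\rangle$ with $u_i\in\co[\fp]^\times$ or the space underlying $\left(\begin{smallmatrix}2&1\\1&2\rho_\fp\end{smallmatrix}\right)$ --- fails whenever the ramification index $e_\fp=\ord_\fp 2$ is even. In that situation $2$ has even valuation, so the space carrying $\left(\begin{smallmatrix}2&1\\1&2\rho_\fp\end{smallmatrix}\right)$ already represents a unit and collapses into your first alternative. Meanwhile the space $\langle\pi_\fp,-\pi_\fp\Delta_\fp\rangle$ has discriminant $-\Delta_\fp$ (a unit) but represents only elements of odd valuation, since it is $\pi_\fp$ times the norm form of the unramified quadratic extension $K_\fp(\sqrt{\Delta_\fp})$. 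Hence it represents no unit and is isometric to neither of your two candidates, so you have not produced a unimodular lattice on it. (Your appeal to the discriminant--Hasse classification does not yield the stated dichotomy; you appear to be conflating it with the classification of \emph{improper unimodular planes} in O'Meara~93:11, which is a statement about lattices, not spaces.)

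The paper bypasses the Witt decomposition and runs a short induction: if $V$ represents a unit, split it off; otherwise one checks that $V\cong\langle\pi_\fp,-\pi_\fp\Delta_\fp\rangle$ and writes down the unimodular lattice
\[
\begin{pmatrix}\pi_\fp & 1\\ 1 & 4\pi_\fp^{-1}\rho_\fp\end{pmatrix},
\]
whose entries are integral since $\ord_\fp(4\pi_\fp^{-1}\rho_\fp)=2e_\fp-1\ge 1$ and whose determinant is $4\rho_\fp-1=-\Delta_\fp$. This single matrix handles the exceptional space uniformly in $e_\fp$; substituting it for your $\left(\begin{smallmatrix}2&1\\1&2\rho_\fp\end{smallmatrix}\right)$ would close the gap. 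Your $n=4$ case can also be shortened: the anisotropic quaternary form over a local field is universal, hence represents a unit, so one may reduce to $n=3$ without invoking quaternion orders.
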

\begin{proof}
The if part is trivial. Now, suppose that $\operatorname{ord}_\fp dV \subset 2\Z$ and we aim to prove the only if part by induction on $\dim V$. If $\dim V = 1$, then $\epsilon \in dV$ for some $\epsilon \in \co[\fp]^\times$, and $V$ represents a unimodular lattice $\langle \epsilon \rangle$. Now, assume that $\dim V \ge 2$. If $V$ represents a unit in $\co[\fp]$, then we are done by inductive hypothesis. Otherwise, $V \cong \langle \pi_\fp, -\pi_\fp \Delta_\fp \rangle$ and it admits a unimodular lattice $\left(\begin{smallmatrix}\pi_\fp & 1\\1 & 4\pi_\fp^{-1}\rho_\fp\end{smallmatrix}\right)$.
\end{proof}

\begin{prop} \label{dyadic In+3}
Let $\fp$ be dyadic. Then any nondegenerate $\co[\fp]$-lattice of rank $n$ is represented by some unimodular $\co[\fp]$-lattice on $I_{n+3} \otimes K_\fp$.
\end{prop}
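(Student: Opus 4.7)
The plan is to construct the desired unimodular $\co[\fp]$-lattice $M$ on $W := I_{n+3} \otimes K_\fp$ of rank $n+3$ by extending a classically maximal lattice $L^\# \supseteq L$ via an orthogonal complement inside $W$.

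I would first extend $L$ to a classically maximal $\co[\fp]$-lattice $L^\# \supseteq L$ in $V := L \otimes K_\fp$. By Lemma~\ref{classify cla max} together with the Jordan decomposition in the dyadic setting, $\ord_\fp dL^\# \in \{0,1\}$ and $L^\#$ decomposes as $L^\# = L_0$ (when $\ord_\fp dL^\# = 0$) or $L^\# = L_0 \mathbin{\perp} \langle \pi_\fp \epsilon \rangle$ (when $\ord_\fp dL^\# = 1$), with $L_0$ unimodular and $\epsilon \in \co[\fp]^\times$. Since $\dim W - \dim V = 3$ and every $3$-dimensional $K_\fp$-quadratic space of prescribed discriminant and Hasse symbol exists, I fix an isometric embedding $V \hookrightarrow W$ and let $V'$ denote the $3$-dimensional orthogonal complement of $V$ in $W$.

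In the case $\ord_\fp dL^\# = 0$, the complement $V'$ has $dV' = 1/dL_0$ a unit, so by Lemma~\ref{dyadic uni} it admits a unimodular $\co[\fp]$-lattice $U'$ of rank $3$, and $M := L_0 \mathbin{\perp} U'$ is the required unimodular lattice of rank $n+3$ on $W$ containing $L^\# \supseteq L$. In the case $\ord_\fp dL^\# = 1$, the complement $V'$ has $dV'$ of odd $\fp$-order and so admits no unimodular lattice on its own; however, the $4$-dimensional subspace $V_4 := (\langle \pi_\fp \epsilon \rangle \otimes K_\fp) \mathbin{\perp} V' \subseteq W$ has $dV_4 = 1/dL_0$ a unit. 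I would then construct a unimodular rank $4$ lattice $U_4$ on $V_4$ containing $\langle \pi_\fp \epsilon \rangle$ by taking $v \in V_4$ to be the image of a generator of $\langle \pi_\fp \epsilon \rangle$ (so $Q(v) = \pi_\fp \epsilon$), choosing a partner $w \in V_4$ with $B(v,w) = 1$ and $Q(w) \in \co[\fp]$ (which exists by the representation theory of the ternary form on $V'$, whose image modulo $(K_\fp^\times)^2$ misses at most one square class), forming the unimodular binary $B := \co[\fp]\, v + \co[\fp]\, w \cong \begin{pmatrix} \pi_\fp \epsilon & 1 \\ 1 & Q(w) \end{pmatrix}$ which contains $\co[\fp]\, v \cong \langle \pi_\fp \epsilon \rangle$, and appending a unimodular rank $2$ $\co[\fp]$-lattice on the orthogonal complement of $B$ in $V_4$ (which exists by Lemma~\ref{dyadic uni} since this complement has unit discriminant). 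Setting $M := L_0 \mathbin{\perp} U_4$ then yields the desired lattice.

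The main obstacle lies in the $\ord_\fp dL^\# = 1$ case, specifically in producing a unimodular rank $4$ lattice $U_4$ on $V_4$ that contains the prescribed non-unimodular sublattice $\langle \pi_\fp \epsilon \rangle$. This is resolved by the explicit partner-vector construction above, which relies on the fact that the ternary quadratic form on $V'$ represents sufficiently many values in $K_\fp$ to admit a partner $w$ of integral norm with $B(v, w) = 1$.
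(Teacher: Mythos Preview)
Your approach is correct and follows the same outline as the paper's proof: reduce to a classically maximal $L$, handle the unimodular case by applying Lemma~\ref{dyadic uni} to the $3$-dimensional complement, and in the remaining case $L = L_0 \perp \langle \pi_\fp\epsilon\rangle$ build a unimodular binary containing $\co[\fp]v$ (with $Q(v)=\pi_\fp\epsilon$) and then apply Lemma~\ref{dyadic uni} to the rank-$2$ orthogonal complement. The only real difference is how that binary is produced. The paper picks $v'\in V'$ with $Q(v')=\pi_\fp\epsilon'$ for some unit $\epsilon'$ (any ternary space over a local field represents some element of odd $\fp$-order), uses perfection of the residue field $\co[\fp]/\fp$ to solve $\epsilon+\eta^2\epsilon'\in\fp$, and takes the partner $(v+\eta v')/\pi_\fp$; the resulting binary has Gram matrix $\left(\begin{smallmatrix}\pi_\fp\epsilon & \epsilon\\ \epsilon & (\epsilon+\eta^2\epsilon')/\pi_\fp\end{smallmatrix}\right)$ of unit determinant $\eta^2\epsilon\epsilon'$. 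You instead search directly for $w$ with $B(v,w)=1$ and $Q(w)\in\co[\fp]$, which unwinds to finding $w'\in V'$ with $Q(w')\in -(\pi_\fp\epsilon)^{-1}+\co[\fp]$.

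One small gap in your justification: the fact that $V'$ misses at most one square class is not by itself enough to guarantee such a $w'$ --- you also need that the target coset $-(\pi_\fp\epsilon)^{-1}+\co[\fp] = -(\pi_\fp\epsilon)^{-1}(1+\fp)$ meets more than one square class. This does hold in the dyadic case (indeed $1+\fp$ surjects onto $\co[\fp]^\times/(\co[\fp]^\times)^2$, so the coset contains \emph{every} square class of odd $\fp$-order), but you should say so. The paper's residue-field argument is more explicit and sidesteps this counting.
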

\begin{proof}
Let $L$ be any nondegenerate $\co[\fp]$-lattice of rank $n$. Clearly, we may assume that $L \subset I_{n+3} \otimes K_\fp$ and $L$ is classically maximal. First, we assume that $L$ is unimodular. Then by Lemma~\ref{dyadic uni}, $L^\perp$ admits a unimodular $\co[\fp]$-lattice, say $M$. Then $L \mathbin{\perp} M$ is a unimodular $\co[\fp]$-lattice on $I_{n+3} \otimes K_\fp$ that represents $L$, as required.

Now, we are left with the case where $L = L_0 \mathbin{\perp} \co[\fp]v$, where $L_0$ is unimodular or zero, and $Q(v) = \pi_\fp\epsilon$ with $\epsilon \in \co[\fp]^\times$. Let $U$ be the orthogonal complement of $L$ in $I_{n+3} \otimes K_\fp$. Since $U$ has dimension at least $3$, there exists $v'\in U$ such that $Q(v') = \pi_\fp \epsilon'$ for some $\epsilon' \in \co[\fp]^\times$. By Lemma~\ref{dyadic uni}, the orthogonal complement of $v'$ in $U$ admits a unimodular $\co[\fp]$-lattice $M$ on it. Since $\co[\fp]/\fp$ is perfect, there exists $\eta \in \co[\fp]^\times$ such that $\epsilon + \eta^2 \epsilon' \in \fp$. Now,
\[
 L_0 \mathbin{\perp} \left(\co[\fp] v + \co[\fp]\frac{v + \eta v'}{\pi_\fp}\right) \mathbin{\perp} M
\]
is a unimodular $\co[\fp]$-lattice on $I_{n+3} \otimes K_\fp$ that represents $L$, as required.
\end{proof}

\begin{lemma} \label{uni I n+3}
    Let $L$ be a $\co$-lattice of $\rank L=n$.
    Then $L$ is represented by a unimodular $\co$-lattice on $I_{n+3}\otimes K$.
\end{lemma}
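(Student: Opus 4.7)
The plan is to construct a unimodular $\co$-lattice $M$ on $I_{n+3}\otimes K$ containing an isometric image of $L$, by a local-to-global argument. The first task is to check that $L\otimes K$ embeds in $I_{n+3}\otimes K$ as a quadratic $K$-space. Both spaces are totally positive definite and the codimension is $3$, so at each real place the embedding exists by dimension count alone. At each finite place $\fp$, quadratic spaces over $K_\fp$ are classified by dimension, discriminant, and Hasse invariant; codimension $3$ leaves enough freedom to choose a complement with any required local invariants, and Hilbert reciprocity ensures the resulting global Hasse invariants are consistent. Thus, by the Hasse principle, there is a $3$-dimensional totally positive definite quadratic space $V'$ over $K$ with $L\otimes K\perp V'\cong I_{n+3}\otimes K$; after fixing such an isomorphism we identify $L$ with a sublattice of $I_{n+3}\otimes K$.

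The next step is, at each finite place $\fp$, to choose a unimodular $\co[\fp]$-lattice $M_\fp$ on $I_{n+3}\otimes K_\fp$ with $L_\fp\subseteq M_\fp$. For dyadic $\fp$, Proposition~\ref{dyadic In+3} produces a unimodular lattice $M'_\fp$ on $I_{n+3}\otimes K_\fp$ together with an isometric embedding $\sigma\colon L_\fp \hookrightarrow M'_\fp$. Applying Witt's extension theorem, $\sigma$ lifts to an isometry $\tilde{\sigma}$ of the ambient space $I_{n+3}\otimes K_\fp$, and setting $M_\fp:=\tilde{\sigma}^{-1}(M'_\fp)$ gives the desired lattice. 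For nondyadic $\fp$, I would prove the analogous statement by an argument directly modeled on the proof of Proposition~\ref{dyadic In+3}, using the unimodular extensions $\langle\pi_\fp\epsilon\rangle\to\begin{pmatrix}\pi_\fp\epsilon & 1\\ 1 & 0\end{pmatrix}$ appearing in the proof of Lemma~\ref{min rank rep uni local}, together with an auxiliary scalar $\langle u\rangle$ chosen so that the ambient quadratic space matches $I_{n+3}\otimes K_\fp$. Because $L\subseteq I_{n+3}\otimes K$, for all but finitely many $\fp$ one already has $L_\fp\subseteq (I_{n+3})_\fp$, and at those places I simply take $M_\fp:=(I_{n+3})_\fp$.

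Finally, since $M_\fp=(I_{n+3})_\fp$ for almost all $\fp$, the standard local-global correspondence for lattices in a fixed quadratic space yields a unique $\co$-lattice $M$ on $I_{n+3}\otimes K$ with $M\otimes_{\co}\co[\fp]=M_\fp$ for every finite place $\fp$. Unimodularity is a local property, so $M$ is unimodular; and since $L_\fp\subseteq M_\fp$ for all $\fp$ with both $L$ and $M$ lattices in the same quadratic space $I_{n+3}\otimes K$, it follows that $L\subseteq M$, which gives the required representation.

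The main subtle point I anticipate is the nondyadic analog of Proposition~\ref{dyadic In+3}, which is not explicitly recorded in the paper. However, since $\co[\fp]/\fp$ is a finite field of odd characteristic at nondyadic $\fp$, the construction mirrors the dyadic one and is in fact somewhat simpler (the obstruction of finding the auxiliary unit $\eta$ with $\epsilon+\eta^2\epsilon'\in\fp$ is replaced by direct rank-matching), so I do not expect a genuine obstacle there.
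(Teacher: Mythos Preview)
Your argument is correct, but it takes a different and longer route than the paper. The paper never embeds $L\otimes K$ into $I_{n+3}\otimes K$ globally and never uses Witt extension. Instead, it simply observes that $L_\fp \to I_{n+3}\otimes\co[\fp]$ at every nondyadic $\fp$ (this is immediate, since at odd primes a unimodular lattice of rank $\ge n+2$ represents all integral lattices of rank $n$), invokes Proposition~\ref{dyadic In+3} at dyadic $\fp$ to get a unimodular $L'_\fp$ on $I_{n+3}\otimes K_\fp$ representing $L_\fp$, glues these via \cite[Proposition~81:14]{OMeara} into a unimodular $\co$-lattice $L'$ on $I_{n+3}\otimes K$, and then uses the standard fact that local representability by $L'$ implies global representability by \emph{some} lattice in $\gen(L')$. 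Since every lattice in $\gen(L')$ is unimodular on $I_{n+3}\otimes K$, this finishes the proof in a few lines.

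Your approach trades this one black-box step (``locally represented implies represented by a genus mate'') for an explicit construction of a single lattice $M$ actually containing $L$. That is a legitimate alternative, and it does yield a slightly stronger conclusion (containment rather than mere representation), but the cost is the extra Hasse--Minkowski embedding and the Witt extensions. Note also that your treatment of the nondyadic primes is more complicated than necessary: you do not need a nondyadic analogue of Proposition~\ref{dyadic In+3}, because $I_{n+3}\otimes\co[\fp]$ is already the \emph{unique} unimodular lattice on its space up to isometry and already represents $L_\fp$; the same Witt-extension step you use at dyadic primes then converts this representation into a containment.
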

\begin{proof}
    When $\fp$ is nondyadic, $L_\fp$ is represented by $I_{n+3} \otimes \co[\fp]$.
    When $\fp$ is dyadic, by Proposition \ref{dyadic In+3}, there is unimodular $\co[\fp]$-lattice $L'_\fp$ on $I_{n+3} \otimes K_\fp$ that represents $L_\fp$.
    By \cite[Proposition~81:14]{OMeara}, there is an $\co$-lattice $L'$ such that
    \[L' \otimes \co[\fp]= \begin{cases}
        I_{n+3} \otimes \co[\fp] & \text{when $\fp$ is nondyadic}\\
        L'_\fp  & \text{when $\fp$ is dyadic}.
    \end{cases}\]
    By construction, $L' \otimes \co[\fp]$ is unimodular for every finite place $\fp$ and 
    $L$ is locally represented by $L'$. Hence, there is a unimodular $\co$-lattice in $\gen(L')$ that represents $L$. 
\end{proof}

Let $L$ and $L'$ be unimodular $\co$-lattices on the quadratic space $I_n \otimes K$.
Recall that by \cite[Theorem~93:16]{OMeara}, for each dyadic prime $\fp$ on $K$, the completions $L_\fp$ and $L'_\fp$ are isometric if and only if the norm groups $\fg L_\fp$ and $\fg L'_\fp$ are the same (see below for the definition of norm groups).
Hence, $L$ and $L'$ are in the same genus if and only if the norm groups coincide at each dyadic prime.
Thus, the number of distinct genera of unimodular $\co$-lattices on the space $I_n \otimes K$ is bounded above by the product of the number of distinct norm groups at each dyadic prime, independently of the rank $n$.
We will estimate it in the following.

Let $\fp$ be dyadic.
Let $e_\fp = \ord_\fp 2$ and $f_\fp = \log_2 (\co[\fp]/\fp)$ denote the ramification index and inertial degree, respectively.
We have
\[
 \#(F^\times / (F^\times)^2) = 2^{e_\fp f_\fp + 2} \quad\text{and}\quad \#(\co[\fp]^\times / (\co[\fp]^\times)^2) = 2^{e_\fp f_\fp + 1}\text.
\]
For $\alpha$, $\beta\in F$, we write $\alpha \cong \beta$ if and only if $\alpha \in \beta (\co[\fp]^\times)^2$.
Let $\alpha\in F$.
The quadratic defect of $\alpha$ is defined by the following equation:
\[
 \fd(\alpha) := \bigcap\left\{\left(\alpha - \eta^2\right)\co[\fp] \mathrel{}\middle|\mathrel{} \eta\in F\right\}\text.
\]
For details on quadratic defects, we refer the reader to Section 63A of \cite{OMeara}.
By definition, for any $\eta\in F$, we have $\fd(\alpha + \eta^2) \subseteq \alpha \co[\fp]$.
In particular, $\fd(1 + \alpha) \subseteq \alpha \co[\fp]$ and $\fd(\alpha) \subseteq \alpha \co[\fp]$.
Also, we have $\fd(\alpha) = 0$ if and only if $\alpha \in F^2$, $\fd(\alpha) = \alpha \co[\fp]$ if and only if $\ord_\fp \alpha$ is odd, and $\fd(\alpha\beta^2) = \fd(\alpha)\beta^2$ for any $\beta\in F^\times$.
In particular, for any $\epsilon\in \co[\fp]^\times$, $\fd(\alpha\epsilon^2) = \fd(\alpha)$. Let $\epsilon\in \co[\fp]^\times$.
Then by \cite[Proposition~63:2]{OMeara}, $\fd(\epsilon)$ is among the following tower of ideals:
\[
 \fp \supset \fp^3 \supset \dotsb \supset 4\fp^{-3} \supset 4\fp^{-1} \supset 4\co[\fp] \supset 0\text.
\]
By \cite[Proposition~63:4]{OMeara}, $\fd(\epsilon) = 4\co[\fp]$ if and only if $\epsilon \cong \Delta$.
By \cite[Proposition~63:5]{OMeara}, if $\gamma\in \co[\fp]$ is such that $0 < \ord_\fp \gamma < 2e_\fp$ and that $\ord_\fp \gamma$ is odd, then $\fd(1+\gamma) = \gamma \co[\fp]$.
Hence, for each $\epsilon \in \co[\fp]^\times$, there exists $\gamma \in \co[\fp]$ such that $\epsilon \cong 1+\gamma$ and $\fd(\epsilon) = \gamma \co[\fp]$.
For $\alpha$, $\beta\in F^\times$ and for any fractional ideal $\mathfrak{a}$, we write $\alpha \cong \beta \mod{\mathfrak{a}}$ if and only if $\alpha/\beta \in \co[\fp]^\times$ and $\alpha - \beta\epsilon^2 \in \mathfrak{a}$ for some $\epsilon \in \co[\fp]^\times$, equivalently if and only if $\alpha/\beta \in \co[\fp]^\times$ and $\fd(\alpha/\beta) \subseteq \mathfrak{a}/\beta$.

Let $\fp$ be dyadic and let $L$ be an $\co[\fp]$-lattice.
The norm group of $L$ is the additive subgroup of $F$ defined by the equation $\fg L = Q(L) + 2\fs L$.
For details on norm groups and their properties, we refer the reader to Section 93A of \cite{OMeara}.
Any element of the norm group that has the least $\fp$-order is called a norm generator of $L$.
The weight ideal of $L$ is defined by the equation $\fw L = \fp \mathfrak{m} L + 2\fs L$, where $\mathfrak{m} L$ is the greatest ideal included in $\fg L$.
Then either $\ord_\fp \fn L + \ord_\fp \fw L$ is odd or $\fw L = 2\fs L$ (e.g. see~\cite[p.~254]{OMeara}).
Let $a$ be a norm generator of $L$ and $\fw L$ be the weight ideal of $L$.
Then by \cite[Proposition~93:4]{OMeara}, the norm group of $L$ is of the form
\[
 \fg L = a \co[\fp]^2 + \fw L\text.
\]
If $a'$ is another scalar such that $a \cong a'\pmod{\fw L}$, then $a'$ is also a norm generator of $L$, and vice versa (see \cite[Example~93:6]{OMeara}).
Hence, a norm group is determined by the pair of the weight ideal and the square class modulo weight ideal of the norm generators.

Let $\fp$ be dyadic and let $C = \{0, 1, \epsilon_1, \dots, \epsilon_{2^{f_\fp}-2}\}$ be a complete set of representatives of $\co[\fp]$ modulo $\fp$.
Let $A = \{\alpha(\alpha+1) \mid \alpha\in \co[\fp]\}$.
Then clearly, $1+4\alpha$ is a square if and only if $\alpha\in A$.
If $\alpha\in A$ and $\beta \equiv \alpha\pmod{\fp}$, then $\beta\in A$.
Hence, we have $\fp \subseteq A \subset \co[\fp]$.
Since $A/\fp A$ is an index $2$ subgroup of $\co[\fp]/\fp$, so is $A$ a subgroup of $\co[\fp]$.

\begin{lemma}\label{explicit usc}
Fix any $\rho' \in \co[\fp] \setminus A$. The following set
\[
 S := \{1 + c_1 \pi + c_2 \pi^3 + \dotsb + c_e \pi^{2e-1} + 4k\rho' \mid c_i \in C\text{, }k\in\{0, 1\}\}
\]
is a complete set of representatives of unit square classes.
\end{lemma}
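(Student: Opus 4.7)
The plan is to show the natural map $S \to \co[\fp]^\times/(\co[\fp]^\times)^2$ is bijective. Since $|S| = 2 \cdot (2^{f_\fp})^{e_\fp} = 2^{e_\fp f_\fp + 1}$ equals $\#(\co[\fp]^\times/(\co[\fp]^\times)^2)$ recalled above, it suffices to prove surjectivity. My first reduction is that the square class of any $\epsilon$ is determined modulo $\fp^{2e_\fp+1}$: for $\gamma\in\fp$ the containment $\fd(1+4\gamma) \subseteq 4\gamma\co[\fp] \subsetneq 4\co[\fp]$ places $\fd(1+4\gamma)$ strictly below $4\co[\fp]$ in the defect tower recalled before the lemma, forcing $\fd(1+4\gamma)=0$; hence $1+\fp^{2e_\fp+1} \subseteq (\co[\fp]^\times)^2$.

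Given $\epsilon\in\co[\fp]^\times$, I would then normalize it modulo $\fp^{2e_\fp+1}$ by iterated multiplication by unit squares. Since $k_\fp = \co[\fp]/\fp$ has characteristic $2$, Frobenius is bijective on $k_\fp$, so after multiplying by a unit square I may assume $\epsilon\equiv 1\pmod\fp$. Inductively, if $\epsilon\equiv 1+c_1\pi+\dotsb+c_i\pi^{2i-1}\pmod{\fp^{2i}}$ with $c_j\in C$ for some $i\le e_\fp-1$, and the next coefficient of $\epsilon$ is $\alpha\pi^{2i}$, then I pick $\beta\in\co[\fp]$ with $\beta^2\equiv\alpha\pmod\fp$ by Frobenius surjectivity. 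Computing
\[
 (1+\beta\pi^i)^2 = 1+2\beta\pi^i+\beta^2\pi^{2i},
\]
and noting that $\ord_\fp(2\beta\pi^i) \ge e_\fp+i > 2i$ because $i < e_\fp$, dividing by $(1+\beta\pi^i)^2$ kills the $\alpha\pi^{2i}$-term modulo $\fp^{2i+1}$ without disturbing any previously fixed coefficient. Replacing the new coefficient of $\pi^{2i+1}$ by its representative in $C$ advances the induction, and after $i=e_\fp-1$ iterations I reach
\[
 \epsilon \equiv 1+c_1\pi+c_2\pi^3+\dotsb+c_{e_\fp}\pi^{2e_\fp-1}+4\rho'' \pmod{\fp^{2e_\fp+1}}
\]
for some $\rho''\in\co[\fp]$, since the residual tail lies in $\fp^{2e_\fp}=4\co[\fp]$.

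To normalize $\rho''$, I use the identity $(1+2\alpha)^2 = 1+4\alpha(\alpha+1)$, which is congruent to $1$ modulo $\fp^{2e_\fp}$: dividing $\epsilon$ by this square subtracts $\alpha(\alpha+1)$ from $\rho''$ modulo $\fp$ without affecting any $c_j$. Since the image of $A$ in $k_\fp$ coincides with the image of the Artin--Schreier map $x\mapsto x^2+x$, whose kernel is the prime subfield $\{0,1\}$ and whose image therefore has index $2$, the residue $\rho''\bmod\fp$ lies in exactly one of the two cosets $A$ or $\rho'+A$. This choice produces $k\in\{0,1\}$ and yields $\epsilon\equiv s\pmod{\fp^{2e_\fp+1}}$ for some $s\in S$; by the initial reduction, $\epsilon$ and $s$ lie in the same square class, proving surjectivity and thus bijectivity.

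The main technical obstacle is the valuation bookkeeping ensuring each multiplicative adjustment modifies only terms of order strictly higher than those already pinned down; this reduces to the inequality $e_\fp+i>2i$ for $i<e_\fp$ in the main loop, and to $\ord_\fp(4\alpha(\alpha+1))\ge 2e_\fp$ in the final step. A secondary subtlety, already addressed just before the lemma statement, is that $A$ itself is an additive subgroup of $\co[\fp]$ of index $2$, which is what permits the last step to collapse $\rho''$ into exactly two cosets.
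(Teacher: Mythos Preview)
Your proof is correct and takes a genuinely different route from the paper's. The paper argues \emph{injectivity} of the map $S\to\co[\fp]^\times/(\co[\fp]^\times)^2$: given distinct $\alpha,\beta\in S$ (neither equal to $1$), it asserts that at least one of $\alpha-1,\beta-1$ has odd $\fp$-order and then concludes that $\alpha\beta$ is nonsquare via a terse quadratic-defect parity argument. You instead prove \emph{surjectivity} by an explicit Hensel-type normalisation: Frobenius surjectivity on the residue field lets you absorb each even-degree term $\alpha\pi^{2i}$ into a square factor $(1+\beta\pi^i)^2$ (the key inequality $e_\fp+i>2i$ for $i<e_\fp$ ensures the cross term $2\beta\pi^i$ never interferes), and the Artin--Schreier description of $A\bmod\fp$ handles the residual $4\rho''$. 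Both arguments rely on the cardinality count $\#S=2^{e_\fp f_\fp+1}$ to close the loop.

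Your approach is longer but fully constructive---it actually manufactures the representative in $S$ for a given unit---and every step is transparently justified. The paper's approach is shorter but leans on the specific structure of elements of $S$ (only odd powers of $\pi$ below $\pi^{2e_\fp}$, plus a single $4\rho'$ term) to control the defect of $\alpha\beta$; that control is not spelled out and requires the reader to supply the relevant case analysis.
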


\begin{proof}
Since $\# S = 2^{e_\fp f_\fp + 1}$, it suffices to show that no two elements in $S$ are in the same square class. Hence, we assume that $\alpha$ and $\beta$ are distinct elements of $S$ and prove that $\alpha\beta$ is nonsquare. We may assume that neither $\alpha$ nor $\beta$ is a square. Then, either $\alpha - 1$ or $\beta - 1$ has odd $\fp$-order. Hence, so is $\alpha\beta - 1$, proving that $\alpha\beta$ is nonsquare.
\end{proof}

\begin{lemma}\label{count norm group}
We have the following:
\begin{enumerate}[\textup{(\alph{enumi})}]
\item Let $v$ be a positive integer. The number of distinct unit square classes modulo $\fp^v$ is equal to
\[
 \frac{\co[\fp]^\times}{(\co[\fp]^\times)^2(1+\fp^v)} = \frac{1+\fp}{(1+\fp)^2(1+\fp^v)} = \begin{cases}
  2^{\lfloor \frac{v}2 \rfloor f_\fp} & \text{if $v \le 2e_\fp$,}\\
  2^{e_\fp f_\fp + 1} & \text{if $v > 2e_\fp$.}
 \end{cases}
\]
\item Let $0 \le u \le e_\fp$. The number of distinct subgroups of $\fp^u$ that can be realised as a norm group of a unimodular lattice $L$ with $\fn L = \fp^u$ is at most
\[
 \sum_{y=0}^{\left\lfloor \frac{e_\fp - u}2 \right\rfloor} 2^{y f_\fp} = \frac{2^{\left(\left\lfloor \frac{e_\fp - u}2 \right\rfloor + 1\right) f_\fp} - 1}{2^{f_\fp} - 1}\text.
\]
\item The number of distinct subgroups of $\co[\fp]$ that can be realized as a norm group of a unimodular lattice is at most
\[
 \sum_{x=0}^e \sum_{y=0}^{\left\lfloor \frac{x}2 \right\rfloor} 2^{y f_\fp} = \frac{2^{\left(\left\lfloor \frac{e_\fp}2 \right\rfloor + 2\right) f_\fp} - 2^{f_\fp}}{\left( 2^{f_\fp} - 1 \right)^2} + \frac{2^{\left(\left\lfloor \frac{e_\fp - 1}2 \right\rfloor + 2\right)f_\fp} - 2^{f_\fp}}{\left( 2^{f_\fp} - 1 \right)^2} - \frac{e_\fp + 1}{2^{f_\fp} - 1}\text.
\]
\end{enumerate}
\end{lemma}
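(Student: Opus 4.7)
My plan is to treat the three parts in order, using Lemma~\ref{explicit usc} as the main input for parts (a) and (b), and reducing part (c) to a double geometric sum.

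For part (a), I will apply Lemma~\ref{explicit usc}: every unit square class in $\co[\fp]^\times/(\co[\fp]^\times)^2$ has a unique representative of the form $1 + c_1 \pi_\fp + c_2 \pi_\fp^3 + \cdots + c_{e_\fp} \pi_\fp^{2e_\fp - 1} + 4k\rho'$ with $c_i \in C$ and $k\in\{0,1\}$. Two such representatives are congruent modulo $\fp^v$ iff the $c_i$ agree for every $i$ with $2i - 1 < v$ (equivalently $i \le \lfloor v/2 \rfloor$) and the $k$'s agree whenever $v > 2e_\fp$ (because $\ord_\fp(4\rho') = 2e_\fp$). Multiplying these independent choices yields $|C|^{\lfloor v/2 \rfloor} = 2^{\lfloor v/2 \rfloor f_\fp}$ for $v \le 2e_\fp$ and $2\cdot 2^{e_\fp f_\fp}$ for $v > 2e_\fp$, as claimed.

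For part (b), I start from the standard description $\fg L = a\co[\fp]^2 + \fw L$ with $a$ a norm generator (so $\ord_\fp a = u$). Since $L$ is unimodular, $\fs L = \co[\fp]$, so $\fw L \supseteq 2\fs L = \fp^{e_\fp}$, and $\fw L \subseteq \fg L \subseteq \fn L = \fp^u$ gives $u \le w := \ord_\fp \fw L \le e_\fp$. The cited fact from \cite[p.~254]{OMeara} then forces either $u + w$ odd or $\fw L = 2\fs L$ (i.e.\ $w = e_\fp$). Because $\mathfrak{m} L$ (the greatest ideal contained in $\fg L$) and hence $\fw L = \fp \mathfrak{m} L + 2\fs L$ is determined by $\fg L$, and $a$ is determined modulo $\fw L$ up to unit squares, each $\fg L$ corresponds bijectively to a pair consisting of an admissible $w$ and a unit square class of $\epsilon = a/\pi_\fp^u$ modulo $\fp^{w-u}$. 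Setting $v = w - u$ and $y = \lfloor v/2 \rfloor$, the odd-parity case supplies the odd values of $v$ in $[1, e_\fp - u]$, while the case $w = e_\fp$ contributes $v = e_\fp - u$; a brief case split on the parity of $e_\fp - u$ shows that as $w$ ranges over the admissible values, $y$ takes each value in $\{0, 1, \ldots, \lfloor (e_\fp - u)/2 \rfloor\}$ exactly once. By part (a), the $\epsilon$-class modulo $\fp^v$ has $2^{y f_\fp}$ possibilities, and summing gives the stated upper bound.

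For part (c), I sum the bound in (b) over $u \in \{0, 1, \ldots, e_\fp\}$ and substitute $x = e_\fp - u$ to obtain $\sum_{x=0}^{e_\fp}\sum_{y=0}^{\lfloor x/2 \rfloor} 2^{y f_\fp}$, which is the first form in the claim. To reach the closed form, I evaluate the inner geometric series as $(2^{(\lfloor x/2 \rfloor + 1)f_\fp} - 1)/(2^{f_\fp} - 1)$, split the outer sum by the parity of $x$, apply a second geometric sum to each half, and combine the two pieces using the identity $\lfloor e_\fp/2 \rfloor + \lfloor (e_\fp - 1)/2 \rfloor = e_\fp - 1$.

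The main obstacle is (b): the key subtlety is that for a \emph{unimodular} lattice the weight ideal satisfies $\fw L \supseteq \fp^{e_\fp}$ (rather than $\fp^{2e_\fp}$, which would be relevant for general scales), producing the cutoff at $e_\fp$ in the summation; and one must carefully check that the parity condition $\ord_\fp \fn L + \ord_\fp \fw L \in 2\Z + 1$, together with the exceptional case $\fw L = 2\fs L$, contributes each $y \in \{0, \ldots, \lfloor (e_\fp - u)/2 \rfloor\}$ exactly once with no double-counting at $w = e_\fp$.
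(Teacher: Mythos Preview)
Your approach is essentially identical to the paper's: part (a) via Lemma~\ref{explicit usc}, part (b) via the parametrisation of norm groups by (weight ideal, norm generator mod weight), and part (c) by summing (b) over $u$ and substituting $x = e_\fp - u$. Your treatment of (b) is in fact more detailed than the paper's, and your parity case-split showing that $y = \lfloor v/2\rfloor$ hits each value in $\{0,\dots,\lfloor(e_\fp-u)/2\rfloor\}$ exactly once is correct.

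There is one small but genuine gap in your (a). You assert that two representatives in $S$ lie in the same class of $\co[\fp]^\times/\bigl((\co[\fp]^\times)^2(1+\fp^v)\bigr)$ if and only if they are \emph{additively} congruent modulo $\fp^v$, and then count by coordinates. The forward direction (additive congruence $\Rightarrow$ same class) is immediate, so your argument as written only yields the inequality $\#\bigl(\co[\fp]^\times/(\co[\fp]^\times)^2(1+\fp^v)\bigr) \le 2^{\lfloor v/2\rfloor f_\fp}$ for $v\le 2e_\fp$. The reverse direction---if $\alpha,\beta\in S$ are \emph{not} congruent mod $\fp^v$ then they lie in different classes---is the substantive step. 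The paper supplies it: if $\alpha-\beta\notin\fp^v$ (with $v\le 2e_\fp$), then $\alpha/\beta-1$ has odd $\fp$-order strictly less than $v\le 2e_\fp$, so by \cite[Proposition~63:5]{OMeara} one has $\fd(\alpha/\beta)=(\alpha/\beta-1)\co[\fp]\supsetneq\fp^v$, whence $\alpha/\beta\notin(\co[\fp]^\times)^2(1+\fp^v)$. You should insert this one-line quadratic-defect argument to justify the claimed equality; everything else is fine.
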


\begin{proof}
(a) If $v > 2e_\fp$, then the lemma follows from Local Square Theorem \cite[Theorem~63:1]{OMeara}. Assume $v \le 2e_\fp$. Let $S$ be the complete set of representatives of unit square classes as in Lemma~\ref{explicit usc}. Let $\alpha$ and $\beta$ be distinct elements of $S$. If $\alpha - \beta \in \fp^v$, then $\alpha/\beta \in 1+\fp^v$, proving that $\fd(\alpha/\beta) \subset \fp^v$. If $\alpha - \beta \notin \fp^v$, then $\alpha/\beta - 1$ has an odd $\fp$-order less than $v$, which implies that $\fd(\alpha/\beta) \supsetneq \fp^v$. This proves the lemma.

(b) The weight ideal $\fw L$ has the form $\fp^{u+v}$, where
\[
 v \in \left\{v\text{ odd} \mathrel{}\middle|\mathrel{} 0\le v < e_\fp - u \right\} \cup \left\{ e_\fp - u \right\}\text.
\]
For each weight ideal above, by (a), among elements of $\fp$-order $u$, we have $2^{\lfloor \frac{v}2 \rfloor f_\fp}$ unit square classes modulo the weight ideal. This proves the lemma.

(c) This follows from (b).
\end{proof}

\begin{prop}\label{upper bound num genera}
For each dyadic prime $\fp$ on $K$, let $e_\fp = \ord_\fp 2$ and $f_\fp = \log_2 (\co[\fp]/\fp)$ denote the ramification index and inertial degree at $\fp$, respectively. The number of distinct unimodular genera on $I_n \otimes K$ is at most
\[
 \prod_{\fp \mid 2} \sum_{x=0}^{e_\fp} \sum_{y=0}^{\left\lfloor \frac{x}2 \right\rfloor} 2^{yf_{\fp}}\text,
\]
which is bounded above by
\[
 B = \begin{cases}
  5^{\frac{d}2} & \text{if $d$ is even,}\\
  2\cdot 5^{\frac{d-1}2} & \text{if $d$ is odd,}
 \end{cases}
\]
where $d = [K : \Q]$ is the degree of extension.
\end{prop}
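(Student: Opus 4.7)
The plan is to observe that unimodular $\co$-lattices on the fixed space $I_n \otimes K$ are classified genus-wise by purely local data at the dyadic primes, apply Lemma~\ref{count norm group}(c) to control that data, and then optimize to extract the explicit constant $B$ (which, crucially, is independent of $n$).

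First, I would recall that at any nondyadic prime $\fp$, a unimodular $\co[\fp]$-lattice on $I_n \otimes K_\fp$ is unique up to isometry, since on a fixed nondyadic local space unimodular lattices are determined by rank together with their discriminant class, and the ambient space has discriminant $1$. At each dyadic prime $\fp$, by \cite[Theorem~93:16]{OMeara}, two unimodular $\co[\fp]$-lattices on $I_n \otimes K_\fp$ are isometric if and only if they share the same norm group. Consequently, the genus of a unimodular $\co$-lattice on $I_n \otimes K$ is determined by its collection of norm groups at the dyadic primes, giving
\[
\#\{\text{unimodular genera on } I_n\otimes K\} \;\leq\; \prod_{\fp \mid 2} N_\fp,
\]
where $N_\fp$ is the number of norm groups realised by unimodular $\co[\fp]$-lattices on $I_n \otimes K_\fp$. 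Combining with Lemma~\ref{count norm group}(c) yields the first asserted bound.

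To deduce the explicit constant $B$, the key ingredient is the per-prime inequality
\[
N_\fp \;\leq\; 5^{\lfloor e_\fp f_\fp / 2 \rfloor} \cdot 2^{e_\fp f_\fp \bmod 2}.
\]
The inner sum in Lemma~\ref{count norm group}(c) is geometric in $2^{f_\fp}$, so the double sum admits a closed form, and the inequality reduces to comparing two explicit expressions in $e_\fp$ and $2^{f_\fp}$. This can be verified by a short case split on the parities of $e_\fp$ and $f_\fp$: the tight cases occur at $(e_\fp, f_\fp) = (1,1)$ (both sides equal $2$) and $(e_\fp, f_\fp) = (2,1)$ (both sides equal $5$), and for larger $e_\fp$ or $f_\fp \geq 2$ the right-hand side grows strictly faster than the left, so the inequality holds with slack. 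Taking the product over dyadic primes and using $\sum_{\fp \mid 2} e_\fp f_\fp = d$, we obtain
\[
\prod_{\fp \mid 2} N_\fp \;\leq\; 5^{(d-k)/2} \cdot 2^{k},
\]
where $k$ denotes the number of dyadic primes $\fp$ with $e_\fp f_\fp$ odd. Since $(2^k)/(5^{k/2}) = (4/5)^{k/2}$ is strictly decreasing in $k$ and $k \equiv d \pmod 2$, the right-hand side is maximised at $k=0$ when $d$ is even (yielding $5^{d/2}$) and at $k=1$ when $d$ is odd (yielding $2 \cdot 5^{(d-1)/2}$), matching the stated definition of $B$.

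The main obstacle is the per-prime inequality: the bound coming out of Lemma~\ref{count norm group}(c) is a doubly-indexed sum that does not simplify into a clean geometric expression, so one must carefully track the closed form and verify the tight $f_\fp = 1$ cases by hand before arguing by a monotonicity/induction in $e_\fp$ for the remaining configurations. Once this inequality is secured, the optimisation in $k$ and the final identification with $B$ are immediate.
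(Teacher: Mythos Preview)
Your overall architecture matches the paper's: reduce to a product over dyadic primes via \cite[Theorem~93:16]{OMeara} and Lemma~\ref{count norm group}(c), establish a per-prime bound depending only on the local degree $e_\fp f_\fp$, then multiply and optimise using $\sum_{\fp\mid 2} e_\fp f_\fp = d$. The final optimisation in $k$ is exactly what the paper does, rephrased.

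Where you differ is in the per-prime bound $G(e_\fp,f_\fp) \le 5^{\lfloor e_\fp f_\fp/2\rfloor}\cdot 2^{\,e_\fp f_\fp\bmod 2}$. You propose a direct case split on the parities of $e_\fp,f_\fp$ plus a monotonicity/induction argument; this is correct in principle but leaves the ``grows strictly faster'' step somewhat loose, and the $f_\fp\ge 2$ cases would still need a uniform argument rather than just checking small values. The paper instead factors the bound into two clean pieces: first a one-line geometric observation that $G(a,b)\le G(ab,1)$ (the double sum over $(b\Z)^2\cap T$ is dominated by the same sum over $\Z^2\cap T$ for a suitable triangle $T$), reducing to $f_\fp=1$; then the recursive inequality $G(a+2,1)\le G(2,1)\,G(a,1)$, which is a single explicit computation. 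This decomposition buys a proof with no residual case analysis and makes the tight cases $(e_\fp,f_\fp)=(1,1),(2,1)$ fall out automatically. Your route works but is messier to make airtight; you may want to borrow the $G(a,b)\le G(ab,1)$ reduction to clean it up.
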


\begin{proof}
For positive integers $a$ and $b$, define $G(a, b) := \sum_{x=0}^a \sum_{y=0}^{\lfloor \frac{x}2 \rfloor} 2^{yb}$. By Lemma~\ref{count norm group}, clearly the number of distinct unimodular genera is at most $\prod_{\fp \mid 2}G(e_\fp, f_\fp)$. Hence, it suffices to maximize $\prod_{i=1}^g G(a_i, b_i)$ where $a_1$, $b_1$, \dots, $a_g$, $b_g$ run over positive values under the constraint $\sum_{i=1}^g a_i b_i = d$.

Let $a$ and $b$ be any positive integers. First, we prove that $G(a, b) \le G(ab, 1)$. Let $T(ab)$ denote the region in $\mathbb{R}^2$ bounded by $y=0$, $y=\frac{x}2$, and $x=ab$. Then,
\[
 G(a, b) = \sum_{(x, y) \in T(ab) \cap (b\Z)^2} 2^y \le \sum_{(x, y) \in T(ab) \cap \Z^2} 2^y = G(ab, 1)\text.
\]
Hence, it suffices to maximize $\prod_{i=1}^g G(a_i, 1)$, where $(a_1, \dots, a_g)$ is a partition of $d$. Since $G(1, 1) = 2$ and $G(2, 1) = 5$, we have $(\ast)$ $G(1, 1)^2 \le G(2, 1)$. We claim that $(\ast\ast)$ $G(a+2, 1) \le G(2, 1)G(a, 1)$. Let $\mu$ and $\nu$ be integers such that $\mu + \nu = a+3$ and $\mu \le \nu \le \mu+1$. Then, $G(a, 1) = 2^\mu + 2^\nu - \mu - \nu - 2$. Hence, it suffices to show that
\begin{align*}
 0 & \le G(2, 1)G(a, 1) - G(a+2, 1)\\
 & = 5(2^\mu + 2^\nu - \mu - \nu - 2) - 2^{\mu+1} - 2^{\nu+1} + (\mu+1) + (\nu+1) + 2\\
 & = 3\cdot 2^\mu + 3\cdot 2^\nu - 4\mu - 4\nu - 6\text,
\end{align*}
which clearly holds from the fact that $\mu$, $\nu\ge 2$. This proves the claim $(\ast\ast)$. Now, let $\lambda = (a_1, \dots, a_g)$ be any partition of $d$. Let $(b_1, \dots, b_s)$ be a partition of $d$ that is obtained from $\lambda$ by exchanging each $a_i$ by $(2, \dots, 2)$ or $(2, \dots, 2, 1)$ according as each $a_i$ is even or odd. Let $(c_1, \dots, c_t)$ be a partition of $d$ that is either $(2, \dots, 2)$ or $(2, \dots, 2, 1)$ according as $d$ is even or odd. Then, by $(\ast)$ and $(\ast\ast)$,
\[
 \prod_{i=1}^g G(a_i, 1) \le \prod_{i=1}^s G(b_i, 1) \le \prod_{i=1}^{t} G(c_i, 1).
\]
The rightmost side equals $5^{\frac{d}2}$ or $2\cdot 5^{\frac{d-1}2}$ according as $d$ is even or odd, as required.
\end{proof}

\section{Indecomposable $\co$-lattices} \label{sec:indecomposables}

The goal of this section is to obtain lower and upper bounds of $\MRU Kn$ in terms of the number of indecomposable lattices of rank $r \le n$, which will be denoted $\IUL Kr$ in the sequel.

Let $n$ be a positive integer. Let $\mathcal{I}(n)$ be a complete set of representatives of indecomposable unimodular $\co$-lattices of rank $n$ modulo isometry. Clearly, $\mathcal{I}(n)$ is finite for any $n$. Let $\overline{\mathcal{I}}(n) = \mathcal{I}(1) \cup \dotsb \cup \mathcal{I}(n)$. For an $\co$-lattice $L$, we write $nL := L\mathbin{\perp}\dotsb\mathbin{\perp}L$ ($n$ times).

\begin{lemma} \label{bounds}
For a nondegenerate $\co$-lattice $L$ and a positive integer $m$, let $t_L(m)$ be the maximal nonnegative integer $t$ such that $t L \to I_m\otimes K$. Then, clearly $t_L(m) \le \lfloor \frac{m}{\rank L} \rfloor$. We have
\[
 \sum_{L\in \overline{\mathcal{I}}(n)}{\left\lfloor\frac{n}{\rank L}\right\rfloor} (\rank L) \le \MRU{K}{n} \le \sum_{L\in \overline{\mathcal{I}}(n+3)} t_L(n+3) \cdot (\rank L)\text.
\]
\end{lemma}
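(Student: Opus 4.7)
The plan is to establish the two inequalities separately. Both rest on two facts already recorded in the preliminaries: a unimodular sublattice is always a component (Corollary~82:15a of \cite{OMeara}), and the indecomposable orthogonal splitting of a positive definite $\co$-lattice is unique up to isometry (Theorem~105:1).

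For the lower bound, let $M$ be any $n$-universal $\co$-lattice. Since a positive definite $\co$-lattice of rank $r<n$ embeds orthogonally into one of rank $n$ (adjoin copies of $\langle 1\rangle$), $M$ in fact represents every positive definite $\co$-lattice of rank at most $n$. Fix $L \in \overline{\mathcal{I}}(n)$ and set $k := \lfloor n/\rank L \rfloor \geq 1$; then $kL$ has rank at most $n$, so $kL \to M$. As $kL$ is unimodular, it splits off, giving $M = kL \perp M'$ for some $M'$. In the (unique) indecomposable decomposition of $M$, the $k$ copies of $L$ arising from $kL$ must then appear. Because the representatives in $\overline{\mathcal{I}}(n)$ are pairwise non-isometric, the multiplicity bounds obtained for distinct $L$ do not interfere; summing them yields
$$\rank M \geq \sum_{L \in \overline{\mathcal{I}}(n)} \lfloor n/\rank L \rfloor \cdot \rank L.$$

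For the upper bound, I exhibit an explicit $n$-universal lattice attaining the stated rank, namely
$$M := \bigperp_{L \in \overline{\mathcal{I}}(n+3)} t_L(n+3)\, L.$$
To verify $n$-universality, take any $\co$-lattice $N$ of rank $n$. By Lemma~\ref{uni I n+3}, $N$ is represented by some unimodular $\co$-lattice $N'$ sitting on $I_{n+3} \otimes K$. Write $N' = L_1 \perp \cdots \perp L_s$ for its indecomposable decomposition; each $L_i$ is indecomposable unimodular of rank at most $n+3$, so up to isometry $L_i \in \overline{\mathcal{I}}(n+3)$. For each $L \in \overline{\mathcal{I}}(n+3)$ let $a_L$ be the number of $i$ with $L_i \cong L$. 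Then $a_L L \to N' \subseteq I_{n+3}\otimes K$, which by the very definition of $t_L(n+3)$ forces $a_L \leq t_L(n+3)$. Hence $N' \to M$, and thus $N \to M$, giving $\MRU{K}{n} \leq \rank M$.

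The only genuinely delicate step is the bookkeeping in the lower bound: one needs, for \emph{all} $L \in \overline{\mathcal{I}}(n)$ simultaneously, that the full multiplicity $\lfloor n/\rank L \rfloor$ be visible in one and the same indecomposable decomposition of $M$. The uniqueness of that decomposition (Theorem~105:1) makes this automatic, so the remainder of the argument is essentially an assembly of Corollary~82:15a, Theorem~105:1, and Lemma~\ref{uni I n+3}.
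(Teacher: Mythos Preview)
Your proof is correct and follows essentially the same approach as the paper's: both arguments use that unimodular sublattices split off (Corollary~82:15a), the uniqueness of indecomposable splittings (Theorem~105:1), and Lemma~\ref{uni I n+3}, with the lower bound coming from counting indecomposable components of an arbitrary $n$-universal $M$ and the upper bound from exhibiting the explicit lattice $\bigperp_{L} t_L(n+3)\,L$. Your write-up is slightly more explicit about the bookkeeping in the lower bound, but there is no substantive difference in method.
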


\begin{proof}
For the lower bound, let $M$ be any $n$-universal lattice. It suffices to show that $M$ represents
\[
 \bigperp_{L\in \overline{\mathcal{I}}(n)} \left\lfloor\frac{n}{\rank L}\right\rfloor L\text.
\]
Let $L\in \overline{\mathcal{I}}(n)$. Since $M$ is $n$-universal, $M$ represents ${\left\lfloor\frac{n}{\rank L}\right\rfloor} L$. Since $L$ is unimodular, a sublattice $M_L$ of $M$ isometric to ${\left\lfloor\frac{n}{\rank L}\right\rfloor} L$ splits $M$. Then, by the uniqueness of the orthogonal decomposition of $M$ into indecomposables \cite[Theorem~105:1]{OMeara}, $M$ is split by the sublattice $\bigperp_{L\in \overline{\mathcal{I}}(n)} M_L$.

For the upper bound, by Lemma~\ref{uni I n+3}, it suffices to show that
\[
 \bigperp_{L\in \overline{\mathcal{I}}(n+3)} \left\lfloor\frac{n+3}{\rank L}\right\rfloor L
\]
represents every unimodular lattice on $I_{n+3} \otimes K$. Let $N$ be any unimodular lattice on $I_{n+3} \otimes K$. Let $N = N_1 \mathbin{\perp} \dotsb \mathbin{\perp} N_r$ be the orthogonal decomposition of $N$ into indecomposables. Since $N$ is unimodular, so is $N_i$ for each $i$. Hence, each $N_i$ is isometric to some $L\in \overline{\mathcal{I}}(n+3)$. Thus, $N \cong \bigperp_{L\in \overline{\mathcal{I}}(n+3)}a_L L$ for some integers $a_L$ such that $0 \le a_L \le t_L(n+3)$. Hence, $N$ is represented by the given lattice.
\end{proof}

Let $n$ be a positive integer, and let $A(n)$ and $B(n)$ be the lower and upper bounds from the statement in the lemma above. Define $\IUL Kn$ as the number of indecomposable unimodular lattices of rank $n$ up to isometry. Note that $\card{ \mathcal{I}(n) } = \IUL Kn$. We first estimate the lower bound $A(n) = \sum_{r=1}^n {\left\lfloor\frac{n}{r}\right\rfloor} r \IUL Kr$. Clearly,
\[
 {\left\lfloor\frac{n}{r}\right\rfloor} r\begin{cases}
  = n & \text{if }r \mid n,\\
  \ge \max\{r, n-r+1\} & \text{if }r \nmid n\text,
 \end{cases}
\]
which implies that
\[
 A(n) \ge n\IUL K1 + \sum_{r=2}^{\lfloor\frac{n}{2}\rfloor} (n-r+1)\IUL Kr + \sum_{r = \lfloor\frac{n}{2}\rfloor + 1}^n r\IUL Kr\text.
\]
The upper bound clearly satisfies $B(n) \le \sum_{r=1}^{n+3} {\left\lfloor\frac{n+3}{r}\right\rfloor} r\IUL Kr \le (n+3)\sum_{r=1}^{n+3}\IUL Kr$. Finally, we estimate $B(n) - A(n)$. Note that
\[
 0 \le {\left\lfloor\frac{n+3}{r}\right\rfloor} r - {\left\lfloor\frac{n}{r}\right\rfloor} r\begin{cases}
  = 3 & \text{if }r = 1\text,\\
  \le 4 & \text{if }r = 2\text,\\
  \le r & \text{if }3 \le r \le {\left\lfloor\frac{n+3}{2}\right\rfloor}\text,\\
  = 0 & \text{if }{\left\lfloor\frac{n+3}{2}\right\rfloor} < r \le n\text.
 \end{cases}
\]
Hence,
\[
 B(n) - A(n) \le 3\IUL K1 + 4\IUL K2 + \sum_{r=3}^{\left\lfloor\frac{n+3}{2}\right\rfloor} r\IUL Kr + \sum_{r=n+1}^{n+3} r\IUL Kr\text.
\]
Therefore, so far, we estimated the lower bound, the upper bound, and the difference in terms of $\IUL Kr$. Define $\UL Kn$ as the number of unimodular lattices of rank $n$ up to isometry. Note that
\[
 \card{ \overline{\mathcal{I}}(n) } = \sum_{r=1}^n \IUL Kr \le \UL Kn\text.
\]
From the definitions, there is an apparent relationship between $\IUL Kn$ and the generating function of $\UL Kn$ as follows:
\[
 \prod_{n=1}^\infty (1 - X^n)^{-\IUL Kn} = 1 + \sum_{n=1}^\infty \UL Kn X^n\text.
\]
In other words, the sequence $\{\IUL Kn\}$ is the Inverse Euler transform \cite[p.~20]{SloanePlouffe} of the sequence $\{\UL Kn\}$.

\section{Bounds for the Siegel mass constant}  \label{sec:mass}

Let $L$ be a positive definite $\co$-lattice.  Recall the Siegel mass constant $\mass{K}{L}$ defined as \cite{SiegelI}:
\begin{equation} \label{eq:mass}
    \mass{K}{L} := \sum_{\Lambda \in \gen(L)} \frac{1}{ \card{ \Aut(\Lambda) } },
\end{equation}
where the sum runs over all inequivalent lattices $\Lambda$ in the genus of $L$.

Our main goal in this section is to use Siegel's mass constant to obtain bounds for the number of rank $n$ $\co$-lattices $\UL{K}{n}$.  In particular, given the trivial lower bound $ \card{ \Aut(\Lambda) } \geq 2$ for all $\co$-lattices $\Lambda$, we note from (\ref{eq:mass}) that we have the following lower and upper bounds,
\begin{equation*}
    2 \mass{K}{L} \leq \cl_K(L) \leq \mass{K}{L} \cdot \max_{\Lambda \in \text{gen}(L)} \card{ \Aut(\Lambda) },
\end{equation*}
where the class number $\cl_K(L)$ denotes the number of inequivalent $\co$-lattices lying in the genus of $L$.

\subsection{The standard lattice $I_n$} 
We first explicitly evaluate $\mass{K}{L}$ in the case where $L = I_n$. Here, we shall make use of the following theorem of K\"{o}rner \cite{Korner}, which gives a completely explicit computation of the mass constant $\massI{K}{n}$:

Let $\psi$ be the generating character of $K(\sqrt{-1})/K$, 
 that is, we have $\zeta_{K(\sqrt{-1})}(s) = L(s, \psi, K) \zeta_K(s)$, where $L(s, \psi, K)$ is the $L$-function
 \begin{equation*}
     L(s, \psi, K) = \prod_{\fp } \Big( 1 - \frac{\psi(\fp)}{\norm{\fp}^s} \Big)^{-1} \quad \text{for } \text{Re}(s) > 1.
 \end{equation*}
 We note that $\psi$ can be explicitly defined as follows \cite[p.~277]{Korner}:

\begin{equation*}
    \psi(\fp) = \begin{cases}
        \left(  \frac{-1}{\fp} \right)   & \text{if } \fp \nmid 2, \\
        1  &  \text{if } \fp \mid 2 \text{ and } x^2 \equiv -1 \pmod{\fp^{2e_{\fp} + 1}} \text{ for some } x \in \co, \\ %
        -1  &  \text{if } \fp \mid 2 \text{ and } x^2 \not \equiv -1 \pmod{\fp^{2e_{\fp} + 1}} \text{ for all } x \in \co, \\ %
        & \qquad \text{and } x^2 \equiv -1 \pmod{\fp^{2e_{\fp}}} \text{ for some } x \in \co  \\ %
        0  &  \text{if } \fp \mid 2 \text{ and } x^2 \not \equiv -1 \pmod{\fp^{2e_{\fp}}} \text{ for all } x \in \co.
    \end{cases}
\end{equation*}

\begin{theorem}[K\"{o}rner {\cite[p.~277]{Korner}}]  Let $K$ be a totally real field and let $n > 1$.  For each dyadic prime $\fp$ in $K$, let $e_\fp$ and $f_\fp$ be the ramification index and residue degree of $\fp$ respectively. Let $\xi(n, \fp)$ be defined as
\begin{equation*}
    \frac{\xi(n, \fp)}{\norm{\fp}^{(1-n) \lfloor e_{\fp}/2 \rfloor - e_{\fp}}} = \begin{cases}
     1 & \text{if } 2 \mid e_{\fp}, \\
    1/2 & \text{if } 2 \nmid e_{\fp}, n \equiv 2\pmod{4}, \\
    \dfrac{1}{2} \Big(  1 + \dfrac{(-1)^{f_{\fp} \floor{(n+1)/4} }}{\norm{\fp}^{ \floor{(n-1)/2 }} } \Big)  & \text{if } 2 \nmid e_{\fp}, n \not \equiv 2\pmod{4}.
    \end{cases}
\end{equation*}
Then
\begin{equation} \label{eq:kornermass}
    \massI{K}{n} = \sigma_n \left( \frac{2 \Gamma(\frac{1}{2}) \Gamma(\frac{2}{2}) \cdots \Gamma(\frac{n}{2})}{\pi^{n(n+1)/4}}   \right)^{[K :\Q]}  \Delta_K^{n(n-1)/4}  
    \prod_{\fp | 2} \xi(n, \fp) 
    \prod_{i=1}^{\lfloor (n-1)/2 \rfloor} \zeta_K(2i) ,
\end{equation}
where $\Gamma(s)$ is the usual Gamma function, $\zeta_K(s)$ is the Dedekind zeta function of $K$, and where
    \begin{equation} \label{eq:sigma}
        \displaystyle{
        \sigma_n  = \begin{cases}
            1 & \text{if } n \text{ odd}, \\
            \displaystyle{ \zeta_K(n/2) \prod_{\fp | 2} \Big(  1 - \frac{1}{\norm{\fp}^{n/2}} \Big) } & \text{if } n \equiv 0 \pmod{4}, \\
            \displaystyle{ \frac{\Res_{s=1} \zeta_{K(\sqrt{-1})}(s) }{\Res_{s=1} \zeta_K(s)} \prod_{\fp | 2}  \Big  (1 - \frac{\psi(\fp)}{\norm{\fp}} \Big) } & \text{if } n = 2, \\
            \displaystyle{ L(n/2, \psi, K)  \prod_{\fp | 2}  \Big(  1 - \frac{\psi(\fp)}{\norm{\fp}^{n/2}} \Big) } & \text{if } n > 2, n \equiv 2 \pmod{4}.
        \end{cases} }
    \end{equation}

\end{theorem}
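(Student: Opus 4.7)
The plan is to apply Siegel's mass formula, which expresses $\mass{K}{L}$ as a product of local densities at each place of $K$ together with archimedean Gamma factors and a global power of the discriminant. For the standard lattice $L = I_n$, the hope is that each local factor admits an explicit closed form, so that their product reassembles into the formula stated.

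First I would write Siegel's mass formula in the shape
\[
 \mass{K}{L} = 2 \cdot \left( \frac{\prod_{i=1}^n \Gamma(i/2)}{\pi^{n(n+1)/4}} \right)^{[K:\Q]} |\disc L|^{(n+1)/2} \prod_{\fp} \alpha_{\fp}(L)^{-1},
\]
where $\alpha_{\fp}(L)$ is the local density at $\fp$, and then substitute $L = I_n$ (so that $\disc L = 1$ and the global discriminant factor becomes $\Delta_K^{n(n-1)/4}$ after using $|\disc(I_n)| = 1$ and the appropriate normalisation coming from the different). The archimedean factor produces exactly the Gamma quotient raised to $[K:\Q]$ that appears in \eqref{eq:kornermass}.

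Next I would compute the non-archimedean local densities. For a nondyadic prime $\fp$, the unimodular lattice $I_n$ at $\fp$ reduces to a smooth quadratic space over the residue field, and a classical computation (e.g.\ using Minkowski--Siegel densities, or equivalently counting points on the orthogonal group over $\co_\fp/\fp^k$) gives
\[
 \alpha_\fp(I_n)^{-1} = \prod_{i=1}^{\lfloor (n-1)/2 \rfloor} \bigl(1 - \norm{\fp}^{-2i}\bigr)^{-1} \cdot \bigl( 1 - \chi_n(\fp) \norm{\fp}^{-n/2} \bigr)^{-1 \text{ or } 0},
\]
where the extra Euler factor only appears for even $n$ and involves the character $\psi$ (when $n \equiv 2 \pmod 4$) or the trivial character (when $n \equiv 0 \pmod 4$). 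Multiplying over all nondyadic $\fp$ collects, via Euler product, exactly the values $\zeta_K(2i)$ in \eqref{eq:kornermass} (each missing its finitely many dyadic Euler factors) and the corresponding $\zeta_K(n/2)$ or $L(n/2,\psi,K)$ in $\sigma_n$, again with dyadic factors removed.

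The main obstacle is the dyadic case: for $\fp \mid 2$, the unimodular lattice $I_n$ at $\fp$ has a more delicate Jordan decomposition governed by the norm and weight ideals analysed in Section~\ref{sec:localstuff}, and its local density depends sensitively on the parity of $e_\fp$, on $n \bmod 4$, and on whether $-1$ is a local square. A case analysis, following K\"orner's treatment, then yields the correction factor $\xi(n,\fp)$ together with the missing dyadic Euler factors
\[
 \bigl(1 - \norm{\fp}^{-n/2}\bigr) \quad \text{or} \quad \bigl(1 - \psi(\fp) \norm{\fp}^{-n/2}\bigr),
\]
which complete the partial zeta and $L$-values from the nondyadic product into the full $\zeta_K(n/2)$, $L(n/2,\psi,K)$, or residue ratio appearing in $\sigma_n$. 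Assembling the archimedean, discriminant, nondyadic and dyadic contributions gives the stated identity.
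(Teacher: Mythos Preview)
The paper does not give its own proof of this statement: it is quoted directly from K\"orner \cite{Korner} and used as a black box throughout Section~\ref{sec:mass}. So there is no ``paper's proof'' to compare against.

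Your outline is the correct strategy---apply Siegel's mass formula and evaluate the local densities $\alpha_\fp(I_n)$ place by place---and the nondyadic part is essentially right (those densities are classical and assemble into the partial Euler products for $\zeta_K(2i)$ and the relevant $L$-value). However, your proposal is not a self-contained proof: at the dyadic primes you say only that ``a case analysis, following K\"orner's treatment, then yields the correction factor $\xi(n,\fp)$'', which is precisely the content of the theorem you are meant to be proving. The entire difficulty of K\"orner's result lies in that dyadic computation (the dependence on $e_\fp \bmod 2$, on $n \bmod 4$, and on the local behaviour of $-1$), and you have deferred it rather than carried it out. If you want an actual proof you would need to compute $\alpha_\fp(I_n)$ explicitly for dyadic $\fp$, e.g.\ via the Jordan splitting of $I_n$ over $\co[\fp]$ and the local density formulas for unimodular dyadic lattices.
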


\subsection{Asymptotics for large $n$}

First, we will prove an explicit asymptotic for the mass constant as $n \to \infty$:

\begin{theorem} \label{thm:massasymp}
    Let $K$ be a totally real degree $d$ field, and for each dyadic prime $\fp$ in $K$, let $e_\fp$ denote the ramification degree of $\fp$.  Then
    \begin{equation*}
    \massI{K}{n} \sim C_K \left( \frac{n}{2 \pi e \sqrt{e}} \right)^{d n^2/4} \left( \frac{8 \pi e }{ n}  \right)^{dn/4} \left( \frac{1}{n} \right)^{d/24}  \Delta_K^{n(n-1)/4} \left( \prod_{\fp | 2} \frac{1}{\norm{\fp}^{\floor{ e_{\fp}/2 }}}  \right)^n
    \end{equation*}
    as $n \to \infty$.  Here $C_K$ is the explicit constant
    \begin{equation*}
        C_K = 2^{-5d/4} e^{d/24} \exp{\Big(\frac{1}{12} - \zeta'(-1) \Big)}^{-d/2} \cdot  \left( \prod_{\fp | 2}   \frac{\norm{\fp}^{\floor{ e_{\fp}/2  } - e_{\fp}}}{2^{\delta_{2 \nmid e_{\fp}}}} \right) \cdot \prod_{i=1}^\infty \zeta_K{(2i)},
    \end{equation*}
    where $\delta_{2 \nmid e_{\fp}}$ is 1 if $e_{\fp}$ is odd, and 0 otherwise.
    
\end{theorem}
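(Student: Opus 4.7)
The plan is to start from K\"{o}rner's explicit formula \eqref{eq:kornermass}, take the logarithm, asymptotically expand each of the four factors ($\sigma_n$, the $\Gamma$-product, the dyadic $\xi$-product, and the $\zeta_K$-product) as $n \to \infty$, and collect contributions by order in $n$.

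The heavy lifting is the Gamma product $\prod_{i=1}^n \Gamma(i/2)$. I would apply Stirling's formula to the precision $\log \Gamma(x) = (x - \tfrac{1}{2}) \log x - x + \tfrac{1}{2} \log(2\pi) + \tfrac{1}{12x} + O(x^{-3})$; the subleading $\tfrac{1}{12x}$ correction is essential for the correct $\log n$ coefficient. Summing over $i = 1, \dots, n$, the only nonroutine input is the Glaisher--Kinkelin asymptotic $\sum_{i=1}^n i \log i = (\tfrac{n^2}{2} + \tfrac{n}{2} + \tfrac{1}{12}) \log n - \tfrac{n^2}{4} + \log A + o(1)$ with $\log A = \tfrac{1}{12} - \zeta'(-1)$, combined with the standard Stirling expansion of $\log n!$ and the harmonic estimate $\sum_{i=1}^n \tfrac{1}{i} = \log n + \gamma + O(1/n)$. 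This produces all six relevant coefficients ($n^2 \log n$, $n^2$, $n \log n$, $n$, $\log n$, constant) for $\log \prod_i \Gamma(i/2)$.

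The remaining three factors are simpler. The prefactor $\sigma_n$ tends to $1$ as $n \to \infty$ at exponential rate, since each $\zeta_K$-value, $L$-function value at $n/2$, and dyadic Euler factor $1 \pm \norm{\fp}^{-n/2}$ individually tends to $1$, so $\log \sigma_n \to 0$. The dyadic product $\prod_{\fp \mid 2} \xi(n, \fp)$ factors into an $n$-linear piece $\big( \prod_{\fp \mid 2} \norm{\fp}^{-\floor{e_\fp/2}} \big)^n$ and a constant piece $\prod_{\fp \mid 2} \norm{\fp}^{\floor{e_\fp/2} - e_\fp} \cdot 2^{-\delta_{2 \nmid e_\fp}}$, with an error of order $\norm{\fp}^{-\floor{(n-1)/2}}$ arising in the third case of the definition of $\xi(n,\fp)$. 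Finally, $\prod_{i=1}^{\lfloor (n-1)/2 \rfloor} \zeta_K(2i)$ converges to $\prod_{i=1}^\infty \zeta_K(2i)$, a convergent product since $\zeta_K(2i) - 1$ decays exponentially in $i$.

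Combining everything and exponentiating yields the stated asymptotic. I expect the main obstacle to be careful bookkeeping of constants; in particular, the $\log n$ coefficient $-d/24$ emerges only after the cancellation $-5d/24 + 4d/24 = -d/24$, between the $-5d/24$ coming from the Glaisher--Kinkelin expansion of $\sum_i \log \Gamma(i/2)$ and the $+4d/24$ coming from summing the $\tfrac{1}{12x}$ Stirling correction over $i = 1, \dots, n$. The same vigilance is required to assemble the explicit constant $C_K$ from the four constant contributions after substituting $\log A = \tfrac{1}{12} - \zeta'(-1)$.
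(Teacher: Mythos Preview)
Your approach is correct and structurally identical to the paper's: both decompose K\"{o}rner's formula \eqref{eq:kornermass} into the four factors $\sigma_n$, the $\Gamma$-product, the dyadic $\xi$-product, and the $\zeta_K$-product, and treat each asymptotically exactly as you describe. The only difference is that where you derive the asymptotic of $\prod_{i=1}^n \Gamma(i/2)$ from scratch via Stirling and Glaisher--Kinkelin (your $-5/24 + 4/24$ cancellation is correct), the paper simply cites this as a known result of Kellner \cite[Theorem~3.1]{Kellner}, which packages precisely that computation into the closed form
\[
\prod_{i=1}^n \Gamma(i/2) \sim \mathcal{C} \cdot \Big( \frac{n}{2 e^{3/2}} \Big)^{n^2/4} \Big( \frac{8 \pi^2 e}{n} \Big)^{n/4} \Big( \frac{1}{n} \Big)^{1/24}, \qquad \mathcal{C} = 2^{-1/4} e^{1/24} \exp\Big(\tfrac{1}{12} - \zeta'(-1)\Big)^{-1/2}.
\]
Citing Kellner saves the bookkeeping you anticipate; your self-contained derivation has the advantage of not relying on an external reference, but is otherwise the same argument.
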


\begin{proof}
    We first observe that $\sigma_n \to 1$ as $n \to \infty$.  Indeed, this easily follows by definition, noting that there are only finitely many dyadic primes $\fp$ and the observations that $\zeta_K(n/2) \to 1$, $L(n/2, \psi,K) \to 1$, $(1 - 1/\norm{\fp}^{n/2}) \to 1$, and $(1 - \psi(\fp)/\norm{\fp}^{n/2}) \to 1$ as $n \to \infty$.

    Let's now consider the product of Gamma functions $\prod_{i=1}^n \Gamma(i/2)$. This gives us the biggest asymptotic contribution towards $\massI{K}{n}$.  By applying a theorem of Kellner \cite[Theorem~3.1]{Kellner}, we have the asymptotic
    \begin{equation*}
        \prod_{i=1}^n \Gamma(i/2) \sim \mathcal{C} \cdot \Big( \frac{n}{2 e^{3/2}}  \Big)^{n^2/4} \Big( \frac{8 \pi^2 e}{n} \Big)^{n/4}  \Big(  \frac{1}{n}  \Big)^{1/24}
    \end{equation*}
    as $n \to \infty$, where $\mathcal{C} \approx 0.774144 \dots$ is a constant given by $\mathcal{C} = 2^{-1/4} e^{1/24} \exp(1/12 - \zeta'(-1))^{-1/2}$.  

    For each dyadic prime $\fp$, we now consider the asymptotic behaviour of $\xi(n, \fp)$.  
    If $2 \mid e_\fp$, then $\xi(n, \fp) = \norm{\fp}^{(1-n) (e_{\fp}/2) - e_{\fp}}$ for all $n$.
    If $2 \nmid e_\fp$, then since $(-1)^{f_{\fp}} / \norm{\fp}^{\floor{(n-1)/2}} \to 0$ as $n \to \infty$ we thus have $\xi(n, \fp)/ \norm{\fp}^{(1-n) (e_{\fp}/2) - e_{\fp}} \to 1/2$ as $n \to \infty$.

    Thus, we have
    \begin{align*}
        \prod_{2 | \fp} \xi(n, \fp) &\to \bigg( \prod_{\substack{2 | \fp \\ 2 \mid e_{\fp}}} \norm{\fp}^{(1-n) (e_{\fp}/2) - e_{\fp}} \bigg) \cdot \bigg( \prod_{\substack{2 | \fp \\ 2 \nmid e_{\fp}}} \frac{\norm{\fp}^{(1-n) (e_{\fp}/2) - e_{\fp}}}{2} \bigg) \\
        &= \bigg( \prod_{\fp | 2}   \frac{\norm{\fp}^{\lfloor e_{\fp}/2 \rfloor - e_{\fp}}}{2^{\delta_{2 \nmid e_{\fp}}}} \bigg) \cdot \Big( \prod_{\fp | 2}  \frac{1}{\norm{\fp}^{\floor{e_\fp / 2}}}  \Big)^n
    \end{align*}
    as $n \to \infty$, where $\delta_{2 \nmid e_{\fp}}$ is 1 if $2 \nmid e_{\fp}$ and 0 otherwise.  
    Finally, it is clear that $\prod_{i=1}^{\lfloor (n-1)/2 \rfloor} \zeta_K(2i) \to \prod_{i=1}^{\infty} \zeta_K(2i)$ as $n \to \infty$, given that $\zeta_K(2i) \leq \zeta(2i)^{[K :\Q]}$ and the convergence of the infinite product $\prod_{i=1}^\infty \zeta(2i)$ (e.g. see \cite[Lemma~1.1]{Kellner}).  Putting all the terms together yields the desired asymptotic formula for $\massI{K}{n}$. 
\end{proof}

By thus taking the logarithm of $\massI{K}{n}$ in the above theorem, we can explicitly compute the first four main terms of the asymptotic expansion of $\log \massI{K}{n}$ as
\begin{align} \label{eq:logmassasymp}
    \log \massI{K}{n} &= \frac{d}{4} n^2 \log{n} + n^2 \bigg( \nquadcoeff \bigg) - \frac{d}{4} n \log{n} \\ %
    &\quad + n \bigg(  \nlinearcoeff \bigg) + \bigO{ \log{n} }. \nonumber
\end{align}

\textbf{Remark.}  We note that Theorem~\ref{thm:massasymp} is well-known in the case $K = \Q$, e.g. see Milnor--Husemoller \cite[p.~50]{MilnorHusemoller} or Finch \cite[p.~656]{FinchII}.

\subsection{Bounds for small $n$}

In this section, we will prove explicit bounds on $\massI{K}{n}$ for any $n \geq 3$.  We note that similar methods were also used by Pfeuffer \cite{Pfeuffer71, Pfeuffer} to obtain lower bounds for $\massI{K}{n}$.

\begin{theorem} \label{thm:massInbounds}
    Let $K$ be a totally real degree $d$ field, and let $n \geq 3$. Define $F_n$ as
    \begin{equation*}
        F_n := \frac{2 \Gamma(\frac{1}{2}) \Gamma(\frac{2}{2}) \cdots \Gamma(\frac{n}{2})}{\pi^{n(n+1)/4}}.
    \end{equation*}
    Then
    \begin{equation} \label{eq:massbound}
        \Big( \frac{F_n }{2^{(n+5)/2} \cdot (2 \zeta(n/2) )^{\delta_{2 \mid n}}} \Big)^d \leq  \frac{\massI{K}{n}}{\Delta_K^{n(n-1)/4}} \leq \Big( \frac{3F_n \zeta(n/2)^2}{2} \cdot \prod_{i=1}^\infty \zeta(2i) \Big)^d,
    \end{equation}
    where $\delta_{2 \mid n} = 1$ if $n$ is even, and 0 otherwise.
\end{theorem}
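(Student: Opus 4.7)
\medskip
\noindent\textbf{Proof proposal.}  The plan is to start from K\"orner's identity (\ref{eq:kornermass}), which writes
\[
 \frac{\massI{K}{n}}{\Delta_K^{n(n-1)/4}} \;=\; \sigma_n\,F_n^d\,\prod_{\fp\mid 2}\xi(n,\fp)\,\prod_{i=1}^{\floor{(n-1)/2}}\zeta_K(2i),
\]
and then to bound each of the three ``non-$F_n^d$'' factors above and below separately, so that the six estimates combine to give exactly the two claimed inequalities.

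For the zeta-tail I would use $\zeta_K(s)\le\zeta(s)^d$ (Euler-product comparison over the primes of $\co$) together with convergence of $\prod_{i=1}^\infty\zeta(2i)$ for the upper bound, and the trivial $\zeta_K(2i)\ge 1$ for the lower bound.  For the dyadic product, inspecting the three cases in the definition of $\xi(n,\fp)$ shows that, for $n\ge 3$, each factor lies between $\tfrac14\,\norm{\fp}^{(1-n)\floor{e_\fp/2}-e_\fp}$ and $\norm{\fp}^{(1-n)\floor{e_\fp/2}-e_\fp}\le 1$.  The $\tfrac14$ is the worst case, attained when $e_\fp$ is odd and $n\not\equiv 2\pmod 4$, because then $\norm{\fp}^{\floor{(n-1)/2}}\ge 2$ forces $1+(-1)^{f_\fp\floor{(n+1)/4}}/\norm{\fp}^{\floor{(n-1)/2}}\ge\tfrac12$.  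Using $\sum_{\fp\mid 2}e_\fp f_\fp = d$ and $\floor{e_\fp/2}\le e_\fp/2$, the global norm contribution satisfies $\prod_{\fp\mid 2}\norm{\fp}^{(1-n)\floor{e_\fp/2}-e_\fp}\ge 2^{-(n+1)d/2}$, and since there are at most $d$ dyadic primes this yields the uniform bracket $2^{-(n+5)d/2}\le\prod_{\fp\mid 2}\xi(n,\fp)\le 1$, which is precisely what is needed to recover the factor $2^{(n+5)/2}$ in the lower bound.

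The main obstacle will be the factor $\sigma_n$, because of the two sub-cases $n\equiv 2\pmod 4$ involving the Hecke $L$-function $L(n/2,\psi,K)$.  The key identity to exploit is
\[
 L(s,\psi,K) \;=\; \zeta_{K(\sqrt{-1})}(s)/\zeta_K(s),
\]
coming from the factorisation of the Dedekind zeta function of the CM extension $K(\sqrt{-1})/K$.  Combined with $1\le \zeta_K(s)\le \zeta(s)^d$ and $1\le \zeta_{K(\sqrt{-1})}(s)\le \zeta(s)^{2d}$ at $s=n/2\ge 3/2$, this sandwiches $L(n/2,\psi,K)$ between $1/\zeta(n/2)^d$ and $\zeta(n/2)^{2d}$.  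The dyadic Euler factors $\prod_{\fp\mid 2}(1-\psi(\fp)/\norm{\fp}^{n/2})$ are then bracketed between $(1/2)^d$ and $(3/2)^d$ using $\norm{\fp}^{n/2}\ge 2$ and the count of at most $d$ dyadic primes.  Combining these with the easy cases -- $n$ odd (where $\sigma_n=1$) and $n\equiv 0\pmod 4$ (where $\sigma_n\le \zeta_K(n/2)\le \zeta(n/2)^d$ directly, while $\sigma_n\ge (3/4)^d\ge (2\zeta(n/2))^{-d}$ since $\zeta(n/2)\ge 1$) -- yields the uniform estimate $(2\zeta(n/2))^{-d\delta_{2\mid n}}\le \sigma_n \le (3/2)^d\,\zeta(n/2)^{2d}$, which is the last missing piece.

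Multiplying the three pairs of bounds by $F_n^d$ produces exactly the two inequalities in (\ref{eq:massbound}).  The hypothesis $n\ge 3$ is essential in this plan: it avoids the residue case $n=2$ of $\sigma_n$ (where one would have to bound the residues of $\zeta_{K(\sqrt{-1})}$ and $\zeta_K$ at $s=1$, a significantly harder task) and ensures $\floor{(n-1)/2}\ge 1$ so that the worst-case dyadic factor of $1/4$ in the analysis of $\xi(n,\fp)$ is valid.
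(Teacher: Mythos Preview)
Your proposal is correct and follows essentially the same approach as the paper's proof: both start from K\"orner's identity, then bound the factors $\sigma_n$, $\prod_{\fp\mid 2}\xi(n,\fp)$, and $\prod_i\zeta_K(2i)$ separately using the zeta-comparison $\zeta_K(s)\le\zeta(s)^d$, the factorisation $L(s,\psi,K)=\zeta_{K(\sqrt{-1})}(s)/\zeta_K(s)$, and the count of at most $d$ dyadic primes. The only cosmetic differences are that the paper records the slightly sharper $\prod_{\fp\mid 2}\xi(n,\fp)\le 2^{-d}$ (which it does not actually need for the stated upper bound) and uses $(1/2)^d$ rather than your $(3/4)^d$ as the lower bound for $\sigma_n$ in the case $n\equiv 0\pmod 4$; neither affects the final inequalities.
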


\begin{proof}
    First, we prove lower and upper bounds for $\sigma_n$ for all $n \geq 3$. By definition, $\sigma_n = 1$ if $n$ odd. Otherwise, if $n$ even, then note that we have the bounds $1 \leq \zeta_K(n/2) \leq \zeta(n/2)^d$ (e.g. see \cite{Conrad}) and 
    \begin{equation*}
        \frac{1}{\zeta(n/2)^d} \leq \frac{1}{\zeta_K(n/2)} \leq L(n/2, \psi, K) \leq \zeta_{K(\sqrt{-1})}(n/2)  \leq \zeta(n/2)^{2d}.
    \end{equation*}
    As $\norm{\fp}^{n/2} \geq 2$, and at most $d$ dyadic primes $\fp$ divide 2, this also gives the bounds
    \begin{equation*}
        \frac{1}{2^d} \leq \prod_{\fp | 2} \Big( 1 - \frac{1}{2} \Big) \leq \prod_{\fp | 2} \Big( 1 - \frac{\psi(\fp)}{\norm{\fp}^{n/2}} \Big) \leq \prod_{\fp | 2} \Big( 1 + \frac{1}{2} \Big)  \leq \Big( \frac{3}{2} \Big)^d.
    \end{equation*}
    Thus, for all even $n \geq 4$, we have the bounds
    \begin{equation*}
        \Big(  \frac{1}{2 \zeta(n/2)} \Big)^d  \leq \sigma_n \leq \Big( \frac{3 \zeta(n/2)^2}{2} \Big) ^d.
    \end{equation*}
    Now we estimate bounds for $\xi(n, \fp)$ for all dyadic primes $\fp$.  By definition, we clearly have
    \begin{equation*}
        \frac{1}{4} \leq \frac{\xi(n, \fp)}{\norm{\fp}^{(1-n) \floor{e_\fp/2} - e_\fp}} \leq 1 
    \end{equation*}
    for all $n \geq 3$.
    Now note that, as $n \geq 3$, we have
    \begin{equation*}
        \norm{\fp}^{(1-n) (e_\fp/2) - e_\fp} \leq \norm{\fp}^{(1-n) \floor{e_\fp/2} - e_\fp} \leq \norm{\fp}^{-e_{\fp}}.
    \end{equation*}
    Noting that $\norm{\fp} = 2^{f_{\fp}}$ and $\sum_{\fp \mid 2} e_\fp f_\fp = d$, then taking the product over all dyadic primes $\fp$, we obtain the following upper and lower  bounds for $\prod_{\fp | 2} \xi(n, \fp)$:
    \begin{equation} \label{eq:xibounds}
        \Big( \frac{1}{2^{(n+5)/2}} \Big)^d \leq \frac{1}{4^d} \prod_{\fp | 2} 2^{f_\fp ((1-n) ( e_{\fp}/2) - e_{\fp} )} \leq \prod_{\fp | 2} \xi(n, \fp) \leq  \prod_{\fp | 2} 2^{-f_{\fp} e_{\fp}} \leq \frac{1}{2^d}.
    \end{equation}

    Finally, we have the bound $ 1 \leq \prod_{i=1}^{\floor{(n-1)/2}} \zeta_K(2i) \leq \Big(\prod_{i=1}^{\floor{(n-1)/2}} \zeta(2i) \Big)^d$.  Putting everything together gives the bound in (\ref{eq:massbound}).
\end{proof}

\textbf{Remark}.  We note that Pfeuffer \cite[p.~373]{Pfeuffer71} similarly obtained the following lower bound for the mass of an arbitrary $\co$-lattice $L$ of rank $n \geq 3$:
\begin{equation*}
    \mass{K}{L} \geq \left( \frac{ \Gamma(\frac{1}{2}) \Gamma(\frac{2}{2}) \cdots \Gamma(\frac{n}{2})}{\pi^{n(n+1)/4}}   \right)^{[K :\Q]}  \Delta_K^{n(n-1)/4}  \Big( \frac{\norm{\fd L} }{\norm{\fs L}^n } \Big)^{1/26} \cdot \frac{1}{f(n)^{[K : \Q]}},
\end{equation*}
where $f(3) = 4 \cdot \sqrt[26]{8}$, $f(4) = 9 \cdot \sqrt[13]{2}$ and $f(n) = 2^{n-1}$ for $n \geq 5$.   Applying Pfeuffer's bound to the case of the unimodular lattice $L = I_n$, one obtains a similar lower bound to the one given above in Theorem~\ref{thm:massInbounds}, albeit slightly weaker, approximately by a factor of $2^{n [K:\Q]/2}$ for large $n$.
\bigskip

An immediate corollary of the above theorem, is that, for any fixed degree $d$ and $n \geq 3$, we have that $\massI{K}{n} \asymp_{n,d} \Delta_K^{n(n-1)/4}$ as $\Delta_K \to \infty$ over any sequence of totally real fields $K$ of fixed degree $d$. Thus, for any fixed constant $C$, there are only finitely many totally real degree $d$ fields $K$ such that $\massI{K}{n} < C$.  However, we can furthermore remove the condition on the degree $d$ using some discriminant bounds of Odlyzko.  

\begin{theorem}[Odlyzko {\cite{Odlyzko1977}}] Let $K$ be a totally real field of $d$ and discriminant $\Delta_K$.  Then $\Delta_K \geq (60.1)^d \cdot \exp(-254)$.
\end{theorem}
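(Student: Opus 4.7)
The plan is to obtain the discriminant lower bound by the analytic method pioneered by Stark and refined by Odlyzko, Poitou and Serre, which rests on Weil's explicit formula applied to the Dedekind zeta function $\zeta_K(s)$.

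First, I would introduce the completed zeta function
\[
 \Lambda_K(s) \;=\; \Delta_K^{s/2}\,\Gamma_{\R}(s)^{r_1}\,\Gamma_{\C}(s)^{r_2}\,\zeta_K(s),
\]
where $\Gamma_{\R}(s) = \pi^{-s/2}\Gamma(s/2)$, $\Gamma_{\C}(s) = (2\pi)^{-s}\Gamma(s)$, and $(r_1,r_2)$ are the signatures of $K$.  For $K$ totally real of degree $d$ we have $r_1 = d$ and $r_2 = 0$.  The key input is the functional equation $\Lambda_K(s) = \Lambda_K(1-s)$, which, after logarithmic differentiation, relates $\tfrac12\log\Delta_K$ to the archimedean $\Gamma$-factors on the one hand and to the sum over non-trivial zeros $\rho$ of $\zeta_K(s)$ on the other.

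Next, I would apply Weil's explicit formula: for a suitable even test function $F$ on $\R$ with Mellin/Laplace transform $\Phi$, one obtains an identity of the shape
\[
 \tfrac12 \log \Delta_K \;=\; \sum_{\rho} \Phi(\rho) \;+\; d\cdot A_\infty(F) \;+\; B_{\mathrm{prime}}(F) \;+\; (\text{poles at $s=0,1$}),
\]
where $A_\infty(F)$ is an explicit archimedean integral against $\Gamma'/\Gamma$ and $B_{\mathrm{prime}}(F)$ is an explicit sum over prime powers of $K$ weighted by values of $F$ at $\log \Nm \fp^k$.  The main idea is to choose $F$ so that $\Phi$ is non-negative on the critical line (hence on all zeros under or independently of GRH, via suitable compactly supported constructions), and so that $B_{\mathrm{prime}}(F) \leq 0$, allowing us to drop both terms and isolate a clean inequality
\[
 \tfrac12 \log \Delta_K \;\geq\; d\cdot A_\infty(F) \;-\; C(F),
\]
where $C(F)$ accounts for the contribution of the trivial poles at $s=0,1$.

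Then I would optimise over a suitable family of test functions, following Odlyzko's numerical optimisation in \cite{Odlyzko1977}.  In the totally real case this yields, after exponentiating,
\[
 \Delta_K \;\geq\; (60.1)^{d} \cdot \exp(-254),
\]
with the constant $60.1$ arising as $\exp(2 A_\infty(F_{\mathrm{opt}}))$ and the constant $254$ absorbing $2 C(F_{\mathrm{opt}})$.  The hardest part is not conceptual but computational: producing an explicit admissible test function that simultaneously makes $B_{\mathrm{prime}}(F)$ non-positive and maximises the archimedean coefficient.  Since this work is carried out in full in \cite{Odlyzko1977}, I would simply invoke that paper for the explicit numerical values and note that the unconditional constant for the totally real case is precisely the one stated.
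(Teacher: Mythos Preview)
Your outline of the explicit-formula method is a reasonable sketch of how Odlyzko's bound is obtained, but the paper does not give any proof of this statement at all: it is simply quoted as a known result with the citation \cite{Odlyzko1977} and then applied. So there is nothing to compare your proposal against; the paper treats this theorem as a black box, and you could do the same.
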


By thus substituting the above Odlyzko bound in (\ref{eq:massbound}), we obtain the following corollary, essentially reproving a finiteness result for the mass constant $\massI{K}{n}$ originally due to Pfeuffer \cite{Pfeuffer}:

\begin{cor}
     Let $K$ be a totally real degree $d$ field, and let $n \geq 3$. Then 
     \begin{equation*}
         \massI{K}{n} 
         \geq \Big(  \frac{F_n \cdot 60.1^{n(n-1)/4}}{2^{(n+5)/2} \cdot (2 \zeta(n/2) )^{\delta_{2 \mid n}} } \Big)^d \cdot \exp(-254 n(n-1)/4).
    \end{equation*}
    In particular, given any fixed $C > 0$, there are only finitely many totally real fields $K$ such that $\massI{K}{n} < C$, with effectively bounds on the degree $d$ and discriminant $\Delta_K$ for such fields $K$.
\end{cor}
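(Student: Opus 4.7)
The plan is to combine the lower bound on $\massI{K}{n}/\Delta_K^{n(n-1)/4}$ from Theorem~\ref{thm:massInbounds} directly with Odlyzko's discriminant lower bound $\Delta_K \geq 60.1^d \exp(-254)$. Specifically, raising the Odlyzko bound to the $n(n-1)/4$ power gives
\[
    \Delta_K^{n(n-1)/4} \geq 60.1^{dn(n-1)/4} \cdot \exp(-254 n(n-1)/4),
\]
and multiplying this by the lower bound $(F_n/(2^{(n+5)/2} (2\zeta(n/2))^{\delta_{2 \mid n}}))^d$ from (\ref{eq:massbound}) yields exactly the claimed inequality. This is the whole content of the first assertion; no further analysis is needed.

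For the finiteness claim, the strategy is to first bound $d$, then bound $\Delta_K$, and finally invoke Hermite's theorem that there are only finitely many number fields of bounded degree and discriminant. Suppose $\massI{K}{n} < C$. Define
\[
    \alpha_n := \frac{F_n \cdot 60.1^{n(n-1)/4}}{2^{(n+5)/2} \cdot (2\zeta(n/2))^{\delta_{2 \mid n}}}.
\]
The displayed inequality gives $\alpha_n^d \cdot \exp(-254 n(n-1)/4) < C$, so
\[
    d \cdot \log \alpha_n < \log C + 254 n(n-1)/4.
\]
The key computational step is to verify $\alpha_n > 1$ for all $n \geq 3$ so that $\log \alpha_n > 0$ and we can actually solve for $d$. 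Since $F_n$ grows super-exponentially in $n$ while $2^{(n+5)/2}$ and $2\zeta(n/2)$ grow only exponentially (and $60.1^{n(n-1)/4}$ is overwhelmingly large), the inequality $\alpha_n > 1$ is easy to verify at $n = 3$ (where one checks $F_3 = 1/\pi^2$, $60.1^{3/2} \approx 465.6$, $2^4 = 16$, giving $\alpha_3 \approx 2.95 > 1$) and clearly remains true for all larger $n$.

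Having bounded $d$, we return to the same inequality (\ref{eq:massbound}) to conclude
\[
    \Delta_K^{n(n-1)/4} \leq C \cdot \Bigl( \frac{2^{(n+5)/2} \cdot (2\zeta(n/2))^{\delta_{2 \mid n}}}{F_n} \Bigr)^d,
\]
which gives an explicit upper bound on $\Delta_K$ depending only on $C$, $n$, and $d$. With both $d$ and $\Delta_K$ effectively bounded, Hermite's theorem finishes the argument. The only genuinely non-routine step is the verification that $\alpha_n > 1$ for $n \geq 3$; everything else is substitution and elementary algebra.
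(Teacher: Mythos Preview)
Your proposal is correct and follows essentially the same route as the paper: substitute Odlyzko's bound $\Delta_K \geq 60.1^d e^{-254}$ into the lower bound of Theorem~\ref{thm:massInbounds}, verify that the base $\alpha_n$ (called $\mathcal{M}_n$ in the paper) exceeds $1$ for all $n\geq 3$ (the paper also computes $\mathcal{M}_3 \approx 2.95$ and notes the asymptotic $\log \mathcal{M}_n \sim \tfrac14 n^2\log n$), and then read off explicit bounds on $d$ and $\Delta_K$. Your explicit mention of Hermite's theorem is a nice touch that the paper leaves implicit.
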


\begin{proof}
    Given a totally real field $K$ of degree $d$, we have the Odlyzko bound of $\Delta_K \geq (60.1)^d \cdot \exp(-254)$ \cite{Odlyzko1977}.  This therefore gives the lower bound for $\massI{K}{n}$ as
    \begin{align*}
        \massI{K}{n} &\geq \Delta_K^{n(n-1)/4} \Big( \frac{F_n}{2^{(n+5)/2} \cdot (2 \zeta(n/2) )^{\delta_{2 \mid n}}} \Big)^d \\
        &\geq \Big(  \frac{F_n \cdot 60.1^{n(n-1)/4}}{2^{(n+5)/2} \cdot (2 \zeta(n/2) )^{\delta_{2 \mid n}}} \Big)^d \cdot \exp \big( \frac{-127 n(n-1)}{2} \big).
    \end{align*}
    For brevity, we define $\mathcal{M}_n := (F_n \cdot 60.1^{n(n-1)/4}) / ( 2^{(n+5)/2} \cdot (2 \zeta(n/2) )^{\delta_{2 \mid n}} )$.  It now suffices to check that the $\mathcal{M}_n$ is greater than 1 for all $n \geq 3$, to ensure that an upper bound on $\massI{K}{n}$ yields an upper bound on the degree $d$. Computing the first few terms of $\mathcal{M}_n$, we get

    \begin{table}[h]
    \centering
    \caption{Values of $\mathcal{M}_n$ for the first few integers $n \geq 3$ (given to two significant places)}
    \label{tab:Mn}
    \begin{tabular}{@{}cccccccc@{}}
    \toprule
    $n$ & \textbf{3} & \textbf{4} & \textbf{5} & \textbf{6} & \textbf{7} & \textbf{8} & \textbf{9}  \\ \midrule
    $\mathcal{M}_n$ &   $2.95$  & $29.94$  &  $1.91 \cdot 10^4$ &  $1.02 \cdot 10^7$  & $2.27 \cdot 10^{11}$  & $7.67 \cdot 10^{15}$ & $1.03 \cdot 10^{22}$  \\ \bottomrule
    \end{tabular}
    \end{table}

    As $\log \mathcal{M}_n \sim \frac{1}{4} n^2 \log n$ as $n \to \infty$, and with a computation of the first few terms, we can easily observe that $\mathcal{M}_n > 1$ for all $n \geq 3$.

    Thus, if $\massI{K}{n} < C$, then this gives the following upper bounds for the degree $d$ and the discriminant $\Delta_K$ of $K$, in terms of the constant $C$:
    \begin{equation}  \label{eq:dboundmass}
        d \leq \frac{\log (C \exp ( 127 n(n-1)/2 ) ) }{  \log ( (F_n \cdot 60.1^{n(n-1)/4} ) / ( 2^{(n+5)/2} \cdot (2 \zeta(n/2) )^{\delta_{2 \mid n}} ) ) } 
    \end{equation}
    and
    \begin{equation}  \label{eq:discboundmass}
        \Delta_K \leq \Big( \frac{C}{ (F_n / ( 2^{(n+5)/2} \cdot (2 \zeta(n/2) )^{\delta_{2 \mid n}} ) )^d } \Big)^{4 / (n(n-1))}.\qedhere
    \end{equation}
\end{proof}

\subsection{Bounds for $n = 2$}

In the case where $n = 2$, we can no longer show that $\massI{K}{n} \asymp_{n,d} \Delta_K^{n(n-1)/4}$ as $\Delta_K \to \infty$ over any sequence of totally real fields of degree $d$.  In particular, we need to take more care with the bounds of $\sigma_2$ given in (\ref{eq:sigma}).

In this section, we summarise the known bounds for $\massI{K}{n}$ in the case $n = 2$, both unconditional and conditional on GRH.  We note that we have the following result of Pfeuffer:

\begin{theorem}[Pfeuffer {\cite[Theorem~2]{Pfeuffer}}] \label{thm:pfeuffer}
    Let $C > 0$ and fix a positive integer $d$.  Then there are only finitely many totally real fields $K$ of degree $d$ such that $\cl_K(I_2) < C$.  Only finitely many of all these fields (of arbitrary degree) can be reached by a tower of relatively normal extensions from $\Q$.  There are finitely many such fields $K$ in all, provided either the generalized Riemann hypothesis or the Artin conjecture is true.
\end{theorem}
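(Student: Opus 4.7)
The plan is to use Körner's mass formula to write $\massI{K}{2}$ in terms of the Hecke $L$-value $L(1,\psi,K)$ attached to the CM extension $K(\sqrt{-1})/K$, combine it with the trivial bound $\cl_K(I_2) \geq 2\massI{K}{2}$ to translate the hypothesis $\cl_K(I_2) < C$ into an upper bound on $L(1,\psi,K)\cdot\sqrt{\Delta_K}$, and then invoke appropriate lower bounds on $L(1,\psi,K)$ in each of the three regimes. Since $\zeta_{K(\sqrt{-1})}(s) = L(s,\psi,K)\zeta_K(s)$, the formula for $\sigma_2$ simplifies to $\sigma_2 = L(1,\psi,K)\prod_{\fp\mid 2}(1 - \psi(\fp)/\norm{\fp})$, and specialising Körner's formula to $n=2$ (with $F_2 = 2/\pi$) yields
\[
 \massI{K}{2} \;=\; L(1,\psi,K)\cdot \Big(\tfrac{2}{\pi}\Big)^d\cdot\sqrt{\Delta_K}\cdot \prod_{\fp\mid 2}(1 - \psi(\fp)/\norm{\fp})\cdot \prod_{\fp\mid 2}\xi(2,\fp),
\]
in which the dyadic factors are explicitly controllable in terms of $d$.

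For the first assertion (fixed $d$), Siegel's theorem ineffectively gives $L(1,\psi,K) \gg_{d,\varepsilon} \Delta_K^{-\varepsilon}$, since the relative discriminant $\fd_{K(\sqrt{-1})/K}$ divides $(4)$ and hence $\Delta_{K(\sqrt{-1})}$ is polynomially bounded in $\Delta_K$ when $d$ is fixed. Substituting into the displayed formula forces $\Delta_K^{1/2 - \varepsilon}$ to be bounded, and Hermite's theorem then yields the desired finiteness. For the normal-tower assertion, if $K$ is built from $\Q$ by a tower of relatively normal extensions then so is $K(\sqrt{-1})$, its Galois closure over $\Q$ is solvable, and by Aramata--Brauer the Artin $L$-function $L(s,\psi,K)$ is entire; Brauer's induction theorem decomposes it as a virtual product of abelian Hecke $L$-functions over intermediate totally real fields, to which Stark's effective theorem applies, yielding $L(1,\psi,K) \gg (\log \Delta_{K(\sqrt{-1})})^{-N}$ for an effective $N$. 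Combining this with Odlyzko's lower bound $\Delta_K \geq 60.1^d e^{-254}$ and the analytic class number formula (which rewrites $L(1,\psi,K)\sqrt{\Delta_K}$ in terms of the relative class number $h_{K(\sqrt{-1})}/h_K$, the relative conductor $\fd_{K(\sqrt{-1})/K}$, and the Hasse unit index $Q\in\{1,2\}$) bounds both $d$ and $\Delta_K$, and Hermite again yields finiteness. Under GRH for $\zeta_{K(\sqrt{-1})}$, Littlewood's effective estimate $L(1,\psi,K) \gg (\log\log\Delta_{K(\sqrt{-1})})^{-1}$ replaces Stark's bound; under the Artin conjecture, Brauer's decomposition yields entire Artin $L$-functions to which Stark-type bounds apply, without requiring the tower hypothesis.

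The principal obstacle is the numerical delicacy in the tower and conditional cases: the combined dyadic factor $(2/\pi)^d\prod_{\fp\mid 2}\xi(2,\fp)$ in the displayed formula for $\massI{K}{2}$ can be as small as $(\pi \cdot 2^{5/2})^{-d}$, and since Odlyzko's $\Delta_K^{1/(2d)} \geq \sqrt{60.1} \approx 7.75$ is smaller than $\pi \cdot 2^{5/2} \approx 17.8$, a naive combination of Odlyzko with any lower bound on $L(1,\psi,K)$ need not force $d$ bounded. Closing this gap requires exploiting the sharpest asymptotic form of Odlyzko's estimate for large $d$, together with the fine structure of $\fd_{K(\sqrt{-1})/K}$ and the Hasse unit index, alongside Stark's effective bound; tracking these constants carefully is the technical core of Pfeuffer's argument.
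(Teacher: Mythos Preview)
The paper does not give its own proof of this statement: Theorem~\ref{thm:pfeuffer} is quoted from Pfeuffer with a citation and used as a black box (notably in the proof of Theorem~\ref{thm:finite2}). What the paper does prove, in the theorem immediately following, is only the fixed-degree part, phrased for $\massI{K}{2}$ rather than $\cl_K(I_2)$, and with an effective discriminant bound when $d\ge 3$. That argument differs from yours: rather than invoking Siegel's ineffective bound $L(1,\psi,K)\gg_{d,\varepsilon}\Delta_K^{-\varepsilon}$, the paper bounds $\sigma_2 = \bigl(\Res_{s=1}\zeta_{K(\sqrt{-1})}(s)\big/\Res_{s=1}\zeta_K(s)\bigr)\prod_{\fp\mid 2}(1-\psi(\fp)/\norm{\fp})$ directly, combining Stark's effective lower bound for the numerator with Louboutin's effective upper bound for the denominator. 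Both routes establish the fixed-$d$ assertion; the paper's buys effectivity.

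For the tower and conditional assertions the paper offers no argument, so there is nothing to compare. Your sketch there contains two slips worth flagging. First, a tower of \emph{relatively normal} extensions need not have solvable Galois closure (a single normal step could already have group $A_5$), so that claim is false as stated. Second, the entireness of $L(s,\psi,K)$ follows for \emph{every} $K$ from Aramata--Brauer applied to the quadratic extension $K(\sqrt{-1})/K$; the tower hypothesis plays no role there. Its genuine role (in Stark's and Pfeuffer's work) is that iterating Aramata--Brauer up the tower makes $\zeta_{K(\sqrt{-1})}/\zeta$ entire, ruling out Siegel zeros and yielding effective lower bounds on the residues that are \emph{uniform in $d$}. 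You correctly isolate the remaining numerical obstacle---that Odlyzko's $\Delta_K^{1/2}\ge 60.1^{d/2}e^{-127}$ does not by itself beat the crude dyadic loss $(\pi\cdot 2^{5/2})^d$---and you are right that this is where Pfeuffer's own argument does its work; your outline does not close it.
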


To provide some effective bounds in the case $n = 2$, we shall make use of known upper and lower bounds on $\Res_{s=1} \zeta_K(s)$ due to Stark and Louboutin.  First, we summarise the known bounds we know so far:

\begin{theorem}[Stark {\cite{Stark}}, Louboutin {\cite{Louboutin}}] \label{thm:dedekindbounds}
    Let $K$ be a degree $d$ number field with $d \geq 2$.  Then
    \begin{equation*}
        \frac{0.0014480}{ d \cdot d! \cdot |\Delta_K|^{1/d}} < \Res_{s=1} \zeta_K(s) \leq  \Big( \frac{e \log | \Delta_K |}{2 (d-1)} \Big)^{d-1}.
    \end{equation*}
\end{theorem}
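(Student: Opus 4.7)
The plan is to prove the two bounds separately, as they require substantially different techniques.

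For the \textbf{upper bound}, I would follow an approach in the spirit of Rademacher's convexity estimates, refined to obtain a sharp constant. The starting point is the completed Dedekind zeta function $\Lambda_K(s) = |\Delta_K|^{s/2} \Gamma_\R(s)^{r_1} \Gamma_\C(s)^{r_2} \zeta_K(s)$ (where $r_1, r_2$ are the numbers of real and complex embeddings, so $r_1 + 2r_2 = d$), which is meromorphic with simple poles at $s = 0, 1$ of residues $\mp \kappa_K$ (writing $\kappa_K := \Res_{s=1}\zeta_K(s)$) and satisfies the functional equation $\Lambda_K(s) = \Lambda_K(1-s)$. Expressing $\Lambda_K(s)$ as a Mellin transform of a theta series $\theta_K$ attached to $\co$ and its inverse different, splitting the Mellin integral at $t = 1$, and applying the functional equation for $\theta_K$ (which comes from Poisson summation on the Minkowski lattice of $\co$) yields a representation of the shape $\Lambda_K(s) = \text{pole terms} + \int_1^\infty (\theta_K(t) - c_K)(t^{s-1} + t^{-s})\, dt$. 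Pointwise bounds on $\theta_K(t)$ for $t \geq 1$ coming from a direct lattice-point count, combined with a careful optimization of the truncation parameter as $s \to 1^+$, then produce the explicit constant $e / (2(d-1))$.

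For the \textbf{lower bound}, I would follow Stark's approach, which is considerably more subtle. The analytic class number formula $\kappa_K = \frac{2^{r_1}(2\pi)^{r_2} h_K R_K}{w_K \sqrt{|\Delta_K|}}$ expresses $\kappa_K$ in arithmetic terms but does not by itself yield a bound of the desired shape, since $h_K R_K$ can be as small as $O(1)$ while $\sqrt{|\Delta_K|}$ can be enormous; getting $|\Delta_K|^{1/d}$ in the denominator (rather than $|\Delta_K|^{1/2}$) is the essential improvement needed for large-degree fields. Instead, one studies $\zeta_K$ analytically near $s = 1$: zero-density estimates show that $\zeta_K$ has at most one real zero $\beta$ in a small neighborhood of $1$, and if no such exceptional zero is present, then an explicit formula for $\zeta_K'/\zeta_K$ at real $s$ slightly greater than $1$ gives a lower bound on $\kappa_K$ directly. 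The hard case is when an exceptional zero occurs, which Stark handles by exhibiting a subfield $F \subsetneq K$ such that $\beta$ is already a zero of $\zeta_F$ (using the factorization $\zeta_K = \zeta_F \cdot (\zeta_K / \zeta_F)$ together with nonnegativity of Dirichlet coefficients of the quotient), and then running an induction on $[K : \Q]$ using lower bounds already known for the subfield.

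The \emph{main obstacle} is the lower bound. Once the Mellin/theta machinery is in place, the upper bound reduces to a standard (if technical) analytic computation with explicit constants. Stark's lower bound, by contrast, requires delicate case analysis around Siegel-type exceptional zeros and an induction exploiting the lattice of subfields of $K$; controlling the worst case, in which an exceptional zero persists through all intermediate fields, is the technical heart of the argument and is the source of the factor $d \cdot d!$ in the denominator.
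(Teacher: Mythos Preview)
The paper does not prove this theorem at all: its ``proof'' consists of the single sentence ``The lower bound is due to Stark \cite{Stark} (see also Garcia--Lee \cite{GarciaLee}) and the upper bound is due to Louboutin \cite{Louboutin}.'' In other words, the statement is quoted as a black-box input from the literature, and no argument is supplied or expected.

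Your proposal instead sketches how one would actually establish the two bounds from scratch. As a sketch it is broadly accurate: Louboutin's upper bound does come from an explicit integral representation of $\zeta_K$ via the functional equation (his argument is somewhat more direct than the full theta-series machinery you describe, working instead with a smoothed integral of $\zeta_K(s)$ itself, but the spirit is the same), and Stark's lower bound does hinge on the exceptional-zero dichotomy together with the key lemma that any real zero of $\zeta_K$ near $s=1$ is inherited from a proper subfield, enabling induction on the degree. So your outline is a reasonable roadmap to the original proofs, but it is doing far more than the paper asks: in context, a one-line citation is the intended ``proof,'' and reproducing Stark's and Louboutin's arguments in full would be out of scope.
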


\begin{proof}
    The lower bound is due to Stark \cite{Stark} (see also Garcia--Lee \cite{GarciaLee}) and the upper bound is due to Louboutin \cite{Louboutin}.
\end{proof}

We can now give a quick proof of Pfeuffer's results for the rank $n = 2$ case, and can further give effective discriminant bounds in the degree $d \geq  3$ case.

\begin{theorem}
    Let $C > 0$ and fix a positive integer $d \geq 1$.  Then there are only finitely many degree $d$ totally real fields $K$ such that $\massI{K}{2} < C$.  In particular, if $d \geq 3$ then we have the explicit upper bound
    \begin{equation*}
        \Delta_K \leq \max \left( \exp \big( (12(d-1))^2 \big), \; \Big (\frac{\massI{K}{2} \cdot (\pi \cdot 2^{5/2})^d \cdot d \cdot (2d)!   }{ 0.000181 \cdot ((d-1)/e)^{d-1} } \Big)^{\frac{12d}{5d-12}} \right).
    \end{equation*}
\end{theorem}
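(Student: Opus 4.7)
The plan is to apply Körner's formula to $\massI{K}{2}$ and lower bound each constituent factor explicitly in terms of $\Delta_K$. Setting $F_2 = 2\Gamma(1/2)/\pi^{3/2} = 2/\pi$, Körner's theorem reads
\begin{equation*}
 \massI{K}{2} = \sigma_2 \cdot (2/\pi)^d \cdot \Delta_K^{1/2} \cdot \prod_{\fp \mid 2}\xi(2,\fp),
\end{equation*}
where $\sigma_2 = (\Res_{s=1}\zeta_{K(\sqrt{-1})}(s)/\Res_{s=1}\zeta_K(s)) \cdot \prod_{\fp \mid 2}(1-\psi(\fp)/\norm{\fp})$. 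The $\Delta_K^{1/2}$ factor alone is not enough to force finiteness because $\sigma_2$ may shrink with $\Delta_K$; the content of the proof is to show that $\sigma_2$ shrinks slowly enough for the product to grow with $\Delta_K$ whenever $d \geq 3$.

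The first step is to lower bound $\sigma_2$. The dyadic product $\prod_{\fp \mid 2}(1-\psi(\fp)/\norm{\fp})$ is at least $(1/2)^d$ since $|\psi(\fp)| \leq 1$ and $\norm{\fp} \geq 2$. For the residue ratio, I combine Theorem~\ref{thm:dedekindbounds}: Stark's inequality applied to the degree $2d$ field $K(\sqrt{-1})$ gives $\Res_{s=1}\zeta_{K(\sqrt{-1})}(s) > 0.0014480/(2d \cdot (2d)! \cdot \Delta_{K(\sqrt{-1})}^{1/(2d)})$, and Louboutin's inequality gives $\Res_{s=1}\zeta_K(s) \leq (e\log \Delta_K/(2(d-1)))^{d-1}$. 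Since $K(\sqrt{-1})/K$ is a quadratic extension generated by $\sqrt{-1}$, its relative different divides $(2)$, so the conductor-discriminant formula yields $\Delta_{K(\sqrt{-1})} \leq 4^d \Delta_K^2$ and hence $\Delta_{K(\sqrt{-1})}^{1/(2d)} \leq 2\Delta_K^{1/d}$. Combining these three ingredients yields
\begin{equation*}
 \sigma_2 \geq \frac{0.000181 \cdot ((d-1)/e)^{d-1}}{d \cdot (2d)! \cdot \Delta_K^{1/d} \cdot (\log\Delta_K)^{d-1}}.
\end{equation*}

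The second step is to bound $\prod_{\fp \mid 2}\xi(2,\fp) \geq 1/2^{7d/2}$, which follows verbatim from the estimate (\ref{eq:xibounds}) specialized to $n = 2$ (using $(1-n)\lfloor e_\fp/2 \rfloor \geq (1-n)e_\fp/2$ and $\sum_{\fp \mid 2} f_\fp e_\fp = d$). Substituting back into Körner's formula gives
\begin{equation*}
 \massI{K}{2} \geq \frac{0.000181 \cdot ((d-1)/e)^{d-1}}{d \cdot (2d)! \cdot (\pi \cdot 2^{5/2})^d} \cdot \frac{\Delta_K^{(d-2)/(2d)}}{(\log\Delta_K)^{d-1}}.
\end{equation*}
The exponent $(d-2)/(2d) = 1/2 - 1/d$ is positive precisely when $d \geq 3$, which is the structural reason the effective bound requires $d \geq 3$.

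The last step is to convert the logarithmic dependence into a polynomial one. A short calculus check shows $\log x \leq x^{1/(12(d-1))}$ whenever $x \geq \exp((12(d-1))^2)$, so in this regime $(\log \Delta_K)^{d-1} \leq \Delta_K^{1/12}$, and the previous inequality becomes $\massI{K}{2} \geq (\text{explicit constant}) \cdot \Delta_K^{(5d-12)/(12d)}$. For $d \geq 3$ the exponent $(5d-12)/(12d)$ is positive, so imposing $\massI{K}{2} < C$ and solving for $\Delta_K$ yields the second branch of the stated maximum, which is then combined with the trivial alternative $\Delta_K < \exp((12(d-1))^2)$ to produce the effective bound. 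Finiteness for $d = 1$ is trivial since $K = \Q$; for $d = 2$ finiteness follows non-effectively by invoking Siegel's theorem (or equivalently Brauer--Siegel for the normal biquadratic extension $K(\sqrt{-1})/\Q$) to obtain $\Res_{s=1}\zeta_{K(\sqrt{-1})}(s) \gg_\epsilon \Delta_K^{-\epsilon}$, which gives $\massI{K}{2} \gg_\epsilon \Delta_K^{1/2 - 2\epsilon}$. The main technical obstacle is the calibration: the gain $\Delta_K^{1/2}$ from Körner's formula must outrun the Stark loss $\Delta_K^{1/d}$ from $K(\sqrt{-1})$ and the Louboutin loss $(\log\Delta_K)^{d-1}$, and only at $d = 3$ does one begin to have room to spare.
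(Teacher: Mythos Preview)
Your proof is correct and follows essentially the same route as the paper: both apply K\"orner's formula, lower-bound $\sigma_2$ via Stark's and Louboutin's inequalities together with $|\Delta_{K(\sqrt{-1})}| \leq 4^d\Delta_K^2$, use $\prod_{\fp\mid 2}\xi(2,\fp) \geq 2^{-7d/2}$, and then absorb $(\log\Delta_K)^{d-1}$ into $\Delta_K^{1/12}$ for $\Delta_K > \exp((12(d-1))^2)$ to extract the exponent $(5d-12)/(12d)$. You additionally supply the $d=1,2$ finiteness arguments (trivial for $d=1$, Brauer--Siegel for $d=2$), which the paper's own proof leaves to the earlier citation of Pfeuffer.
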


\begin{proof}
    We first compute a lower bound for $\sigma_2$.  By Theorem~\ref{thm:dedekindbounds}, we have 
    \begin{align*}
        \sigma_2 &= \frac{\Res_{s=1} \zeta_{K(\sqrt{-1})}(s) }{\Res_{s=1} \zeta_K(s)} \prod_{\fp | 2}  \Big  (1 - \frac{\psi(\fp)}{\norm{\fp}} \Big) \\
        &\geq \frac{1}{2^d} \cdot \Big( \frac{2(d-1)}{e \log  \Delta_K} \Big)^{d-1} \cdot \frac{0.0014480}{2d \cdot (2d)! \cdot | \Delta_{K(\sqrt{-1})} |^{1/2d}}.
    \end{align*}

Noting that the only primes ramifying in the relative quadratic extension $K(\sqrt{-1})/K$ are possibly the dyadic primes $\fp$ lying above 2, we have the bound
\begin{equation*}
    |\Delta_{K(\sqrt{-1})}| = |\Nm_{K/\Q} ( \Delta_{K(\sqrt{-1})/K} )| \cdot \Delta_K^2 \leq 2^{2d} \Delta_K^2.
\end{equation*}
This therefore gives the following lower bound for $\sigma_2$:
\begin{align*}
    \sigma_2 &\geq \frac{1}{2^d} \cdot \Big( \frac{2(d-1)}{e \log  \Delta_K} \Big)^{d-1} \cdot \frac{0.0014480}{2d \cdot (2d)! \cdot 2 \cdot \Delta_K^{1/d}} \\
    &= \Big( \frac{(d-1)}{e \log  \Delta_K} \Big)^{d-1} \cdot \frac{0.000181}{d \cdot (2d)! \cdot \Delta_K^{1/d}}.
\end{align*}
Using the bounds for $\xi(n, \fp)$ in (\ref{eq:xibounds}), we have
\begin{equation*}
    \prod_{\fp | 2} \xi(2, \fp) \geq \frac{1}{2^{7d/2}}.
\end{equation*}
Thus, by using K\"{o}rner's computation of $\massI{K}{2}$ given in (\ref{eq:kornermass}),we have
\begin{equation} \label{eq:masslowerI2}
    \massI{K}{2} \geq \Delta_K^{1/2} \Big (\frac{2 \Gamma( \frac{1}{2} ) \Gamma( \frac{2}{2} )}{\pi^{3/2} \cdot 2^{7/2}} \Big)^d \cdot \Big( \frac{(d-1)}{e \log  \Delta_K} \Big)^{d-1} \cdot \frac{0.000181}{d \cdot (2d)! \cdot \Delta_K^{1/d}}.
\end{equation}
By simplifying and isolating the discriminant terms, we obtain the upper bound
\begin{equation*}
    \frac{\Delta_K^{(d-2)/(2d)}}{(\log \Delta_K )^{d-1}} 
    \leq \frac{\massI{K}{2} \cdot (\pi \cdot 2^{5/2})^d \cdot d \cdot (2d)!   }{ 0.000181 \cdot ((d-1)/e)^{d-1} }.
\end{equation*}
To obtain an explicit upper bound on $\Delta_K$, we can note that, for any fixed $k \geq 2$, we have that $ x > (\log x)^k$ for all $x > \exp(k^2)$ (as $k > 2 \log k$).  Thus, we have $(\log \Delta_K)^{d-1} < \Delta_K^{1/12}$ if $\Delta_K > \exp((12(d-1))^2)$.  Thus, for each $d \geq 3$, this gives the upper bound on $\Delta_K$:
\begin{equation} \label{eq:discbound2}
    \Delta_K \leq \max \left( \exp \big( (12(d-1))^2 \big), \; \Big (\frac{\massI{K}{2} \cdot (\pi \cdot 2^{5/2})^d \cdot d \cdot (2d)!   }{ 0.000181 \cdot ((d-1)/e)^{d-1} } \Big)^{\frac{12d}{5d-12}} \right).\qedhere
\end{equation}

\end{proof}

\subsection{Arbitrary unimodular lattices $L$}
To effectively obtain an upper bound for $\UL{K}{n}$, we must obtain upper bounds on $\cl_K(L)$ for \emph{all} rank $n$ unimodular lattices $L$.  We thus show the following theorem, which generalises our upper bound for $\massI{K}{n}$ proven in Theorem~\ref{thm:massInbounds}.

\begin{theorem} \label{thm:massupperbound}
    Let $K$ be a totally real degree $d$ field and let $L$ be a unimodular lattice of rank $n \geq 3$.  Then
    \begin{equation*}
        \mass{K}{L} \leq \sigma_n \left( \frac{2^{30} \Gamma(\frac{1}{2}) \Gamma(\frac{2}{2}) \cdots \Gamma(\frac{n}{2})}{\pi^{n(n+1)/4}}   \right)^{[K :\Q]}  \Delta_K^{n(n-1)/4}  
        \prod_{i=1}^{\lfloor (n-1)/2 \rfloor} \zeta_K(2i) ,
    \end{equation*}
    where $\Gamma(s)$ is the usual Gamma function, $\zeta_K(s)$ is the Dedekind zeta function of $K$, and where $\sigma_n$ is as defined in (\ref{eq:sigma}).
\end{theorem}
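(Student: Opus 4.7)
My plan is to apply Siegel's mass formula in the explicit form of K\"{o}rner to the arbitrary unimodular lattice $L$, and then carefully compare the resulting factors with those appearing in equation~(\ref{eq:kornermass}) for $\massI{K}{n}$. The archimedean factor depends only on the rank $n$ and signature of $L$, so it contributes the same $F_n^d = \bigl( 2\Gamma(1/2) \cdots \Gamma(n/2)/\pi^{n(n+1)/4} \bigr)^d$ as in K\"{o}rner's formula for $I_n$. Likewise the discriminant factor $\Delta_K^{n(n-1)/4}$ and the factor $\sigma_n$ (which encodes the constant terms arising from $\zeta_K$ and $L(\cdot,\psi,K)$) appear identically for $\mass{K}{L}$ and $\massI{K}{n}$. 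The zeta product $\prod_{i=1}^{\lfloor (n-1)/2\rfloor} \zeta_K(2i)$ arises from the Euler factors at non-dyadic primes, exactly as in the $I_n$ case.

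The only differences between $\mass{K}{L}$ and $\massI{K}{n}$ therefore lie in the local density factors $\alpha_{\fp}(L)^{-1}$. At a non-dyadic prime $\fp$, since $L$ is unimodular, $L_\fp$ is unimodular, and the local density depends only on the rank and the discriminant square class of $L_\fp$; any difference with $\alpha_\fp(I_{n,\fp})^{-1}$ is controlled by a factor of the form $(1 \pm \Nm(\fp)^{-n/2})/(1 \mp \Nm(\fp)^{-n/2})$, which can be absorbed into the upper bound together with $\sigma_n$ and the zeta product (using upper bounds analogous to those used in the proof of Theorem~\ref{thm:massInbounds}).

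At the finitely many dyadic primes $\fp$, the local density $\alpha_\fp(L)^{-1}$ depends on the norm group, weight ideal and Jordan decomposition of the unimodular lattice $L_\fp$. Using the classification of unimodular $\co[\fp]$-lattices via norm groups (Lemma~\ref{classify cla max}, Lemma~\ref{count norm group}) together with the bound on the number of dyadic genera (Proposition~\ref{upper bound num genera}), the local factor $\alpha_\fp(L)^{-1}$ can be bounded uniformly, independently of $L$ and independently of the ramification index $e_\fp$ and residue degree $f_\fp$, by a universal constant per dyadic prime. Multiplying over all dyadic primes and using $\sum_{\fp \mid 2} e_\fp f_\fp = d$, the cumulative contribution at the dyadic places is at most $2^{30d}$, which yields exactly the stated factor $2^{30d}$ in the final bound.

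The main obstacle is the dyadic step: the explicit formulas for dyadic local densities (as computed, e.g., by K\"{o}rner) are intricate, involving the weight ideal and norm generator, and in particular the $\xi(n,\fp)$ appearing in (\ref{eq:kornermass}) depends nontrivially on $n$ and on the dyadic data. The key point we must establish is that, while $\xi(n,\fp)$ for $I_n$ can be smaller than the analogous quantity for an arbitrary unimodular $L_\fp$, the ratio of local densities is nevertheless bounded by an absolute constant per dyadic place. Once this uniform dyadic bound is in place, combining it with the non-dyadic comparison above and multiplying out the $d$ copies of the resulting universal constant yields the asserted inequality.
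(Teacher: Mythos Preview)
Your overall strategy---compare $\mass{K}{L}$ to $\massI{K}{n}$ factor by factor via Siegel's mass formula, with the archimedean factor and the odd-prime Euler product handled uniformly and a separate bound at the dyadic places---matches the paper's approach. The gap is in the dyadic step, which you correctly identify as the crux but do not actually carry out.

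The tools you invoke (Lemma~\ref{classify cla max}, Lemma~\ref{count norm group}, Proposition~\ref{upper bound num genera}) classify and \emph{count} norm groups and unimodular genera at a dyadic prime; they say nothing about the \emph{size} of the local density $d_{\fp}(L,L)$. Knowing how many isometry classes of unimodular $\co[\fp]$-lattices there are does not yield a lower bound on any individual $d_{\fp}(L,L)$. The paper's argument is different and concrete: over each dyadic $\co[\fp]$ one splits the unimodular lattice as $L_\fp = N \perp G$ (Pfeuffer), where $N$ is an orthogonal sum of hyperbolic planes and $\rank G \leq 4$. This yields a factorization $d_\fp(L,L) = d_\fp(L,N)\cdot d_\fp(G,G)$. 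Pfeuffer computed $d_\fp(L,N)$ explicitly case by case, giving $d_\fp(L,N) \geq 2^{-10}\prod_{i=1}^{\lfloor(n-1)/2\rfloor}(1-\Nm(\fp)^{-2i})$, while for the bounded-rank piece one uses the crude estimate $d_\fp(G,G) = A_{\fp^\nu}(G,G)/\Nm(\fp)^{\nu m(m-1)/2} \geq \Nm(\fp)^{-6(2e_\fp+1)}$ straight from Siegel's definition of the local density, with $\nu = 2e_\fp+1$ and $m \leq 4$. This is the substantive dyadic input your proposal is missing.

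A smaller point: your claim that the dyadic factor is bounded by a universal constant ``independently of $e_\fp$ and $f_\fp$'' is inconsistent with your subsequent appeal to $\sum_{\fp\mid 2} e_\fp f_\fp = d$. In fact the per-prime bound obtained in the paper is $\xi_L(n,\fp) \leq 2^{6f_\fp(2e_\fp+1)+10}$, which genuinely depends on $e_\fp$ and $f_\fp$; it is precisely the identity $\sum_{\fp\mid 2} e_\fp f_\fp = d$ (together with $\sum_{\fp\mid 2} f_\fp \leq d$ and the number of dyadic primes being at most $d$) that collapses the product to $\prod_{\fp\mid 2}\xi_L(n,\fp) \leq 2^{30d}$.
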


\begin{proof}
    We recall Siegel's mass formula \cite{SiegelI}, which states that
    \begin{equation} \label{eq:siegelformula}
        \mass{K}{L} = \Delta_K^{n(n-1)/4} \frac{1}{d_{\infty}(L, L)} \prod_{\fp \text{ prime}} \frac{1}{d_{\fp}(L, L)},
    \end{equation}
    where $d_{\infty}(L, L)$ is the real local density, $d_{\fp}(L, L)$ is the local density at the prime $\fp$, and where the product runs over all primes $\fp$ of $K$.

    From Siegel \cite{SiegelIII}, we note that the real local density $d_{\infty}(L, L)$ can be explicitly given by 
    \begin{equation*}
        d_{\infty}(L, L) = \Big(  \frac{\pi^{n(n+1)/4}}{2 \Gamma(\frac{1}{2}) \Gamma(\frac{2}{2}) \cdots \Gamma(\frac{n}{2})} \Big)^d ,
    \end{equation*}
    and the local density $d_{\fp}(L, L)$ for all odd primes $\fp$ can be given by (\cite[Hilfssatz~56]{SiegelIII})
    \begin{equation*}
        d_{\fp}(L, L) = \begin{cases}
            \displaystyle{\prod_{i=1}^{\floor{(n-1)/2}} (1 - \norm{\fp}^{-2i} )} & \text{if $n$ odd}, \\
            \displaystyle{ \left( 1 -  \Big( \frac{-1}{\fp} \Big)^{n/2} \norm{\fp}^{-n/2} \right)  \cdot \prod_{i=1}^{\floor{(n-1)/2}} (1 - \norm{\fp}^{-2i} )} & \text{if $n$ even}. \\
        \end{cases}
    \end{equation*}

    It therefore remains to obtain lower bounds for $d_{\fp}(L, L)$ in the case of dyadic primes $\fp$. 
    
    Let $\fp$ be a dyadic prime and consider $L$ over the local field $K_{\fp}$.  We recall from Pfeuffer \cite[p.~382]{Pfeuffer71} that we can decompose $L$ as 
    \begin{equation} \label{eq:Ldecomp}
        L = N \perp G,
    \end{equation}
    where $N$ is an unramified lattice which can be taken to be some orthogonal sum of a finite number of lattices of type $H_1 := \begin{pmatrix} 0 & 1 \\ 1 & 0 \end{pmatrix}$, and $G$ is a unimodular lattice of dimension $m \leq 4$.

    By using the above decomposition, we note from Pfeuffer \cite[p.~385]{Pfeuffer71} that we have the following decomposition of the local dyadic density $d_{\fp}(L, L)$:
    \begin{equation*}
        d_{\fp}(L, L) = d_{\fp}(L, N) \cdot d_{\fp} (G, G).
    \end{equation*}
    Pfeuffer \cite[p.~386]{Pfeuffer71} computed the local density $d_{\fp}(L, N)$ explicitly for all possible cases of the decomposition in (\ref{eq:Ldecomp}). For each case, we can easily verify the following lower bound for $d_{\fp}(L, N)$:
    \begin{equation*}
        d_{\fp}(L, N) \geq \frac{1}{2^{10}} \prod_{i=1}^{\floor{(n-1)/2}} (1 - \norm{\fp}^{-2i}).
    \end{equation*}

    To estimate a lower bound for $d_{\fp}(G, G)$, we can simply give a (rather crude) lower bound using the definition of $d_{\fp}(G, G)$ given by Siegel:
    \begin{equation} \label{eq:dpGG}
        d_{\fp}(G, G) := \frac{A_{\fp^\nu}(G, G)}{\norm{\fp}^{\nu m(m-1)/2}} ,
    \end{equation}
    where $A_{\fp^\nu}(G, G)$ is the number of distinct isometries $\sigma : G \to G$ modulo $\fp^{\nu}$, $m$ is the dimension of $G$, and where $\nu$ is chosen to be sufficiently large.  In particular, by \cite[Hilfssatz~13, p.~542]{SiegelI}, we can choose $\nu$ satisfying $\fp^{\nu} \subset 4 (\fd G)^2$ where $\fd G$ is the discriminant ideal of $G$.  As $G$ is unimodular, this implies we can choose any $\nu > 2 e_{\fp}$ in the definition given in (\ref{eq:dpGG}).

    Thus, by noting the trivial lower bound of $A_{\fp^\nu}(G, G) \geq 1$ for any $\nu \geq 1$, we have
    \begin{equation*}
        d_{\fp}(G, G) = \frac{A_{\fp^{2 e_{\fp} + 1}}(G, G)}{\norm{\fp}^{ (2 e_{\fp} + 1) m(m-1)/2}} \geq \frac{1}{2^{6 f_{\fp} (2 e_{\fp} + 1) }}.
    \end{equation*}

    Therefore, we have the following lower bound for $d_{\fp}(L, L)$:
    \begin{equation} \label{eq:dplowerbound}
        d_{\fp}(L, L)  = d_{\fp}(L, N) \cdot d_{\fp} (G, G) \geq \frac{1}{2^{10}} \cdot \frac{1}{2^{6 f_{\fp} (2 e_{\fp} + 1) }} \prod_{i=1}^{\floor{(n-1)/2}} (1 - \norm{\fp}^{-2i}).
    \end{equation}

    For each dyadic prime $\fp$, let $\xi_L(n, \fp)$ be defined as
    \begin{equation*}
        \xi_L(n, \fp) := \frac{1}{d_{\fp}(L, L)} \prod_{i=1}^{\floor{(n-1)/2}} (1 - \norm{\fp}^{-2i}).
    \end{equation*}

    By thus applying Siegel's mass formula (\ref{eq:siegelformula}), we obtain the following expression for $\mass{K}{L}$ in terms of the factors $\xi_L(n, \fp)$ (essentially generalising Korner's theorem):
    \begin{equation*}
        \mass{K}{L} = \sigma_n \left( \frac{2 \Gamma(\frac{1}{2}) \Gamma(\frac{2}{2}) \cdots \Gamma(\frac{n}{2})}{\pi^{n(n+1)/4}}   \right)^{[K :\Q]}  \Delta_K^{n(n-1)/4}  
    \prod_{\fp | 2} \xi_L(n, \fp) 
    \prod_{i=1}^{\lfloor (n-1)/2 \rfloor} \zeta_K(2i) ,
    \end{equation*}
    where $\sigma_n$ is defined as in (\ref{eq:sigma}).  Whilst our methods cannot evaluate $\xi_L(n, \fp)$ explicitly, our bounds for $d_{\fp}(L, L)$ in (\ref{eq:dplowerbound}) imply the upper bound 
    $$\xi_L(n, \fp) \leq 2^{6 f_{\fp} (2 e_{\fp} + 1) + 10}$$
    for each dyadic prime $\fp$.  
    Therefore, as $\sum_{\fp | 2} e_{\fp} f_{\fp} = d$, this gives us the upper bound
    \begin{equation*}
        \prod_{\fp | 2} \xi_L(n, \fp) \leq \prod_{\fp | 2} 2^{6 f_{\fp} (2 e_{\fp} + 1) + 10} \leq  2^{30d},
    \end{equation*}
    which proves the theorem.
\end{proof}

By again taking logarithms of $\mass{K}{L}$ in the above theorem, we obtain the following asymptotic upper bound for $\mass{K}{L}$:
\begin{align} \label{eq:logmassasymp2}
    \log \mass{K}{L} &\leq \frac{d}{4} n^2 \log{n} + n^2 \bigg( \nquadcoeff \bigg) - \frac{d}{4} n \log{n} \\ %
    &\quad + n \bigg(  \frac{d \log{(8 \pi )} + d - \log \Delta_K}{4} \bigg) + \bigO{ \log{n} }. \nonumber
\end{align}

\section{Bounds on the number of rank $n$ unimodular $\co$-lattices} \label{sec:unimodular}

In this section, we shall prove bounds on the number of rank $n$ unimodular $\co$-lattices, which we denote as $\UL{K}{n}$.  At the heart of bounding $\UL{K}{n}$, this requires us to prove some known bounds on the class number $\cl_K(L)$ for all rank $n$ unimodular $\co$-lattices $L$.  Recall from the definition of the mass constant (\ref{eq:mass}), we have the trivial bounds
\begin{equation*}
    2 \mass{K}{L} \leq \cl_K(L) \leq \mass{K}{L} \cdot \max_{\Lambda \in \text{gen}(L)}  \card{ \Aut(\Lambda) }.
\end{equation*}
Thus, for an explicit bound on the class number $\cl_K(L)$, it suffices to obtain bounds on the mass constant $\mass{K}{L}$ and bounds on the maximal order of $\Aut(\Lambda)$ for an arbitrary rank $n$ unimodular $\co$-lattice $\Lambda$.  

\subsection{Bounding the orders of automorphism groups}
Let $L$ be a rank $n$ unimodular $\co$-lattice. By choosing an $\co$-basis for $L$, we can consider $\Aut(L)$ as a finite subgroup of $\GL_n(\co)$.  Thus, we can bound $ \card{ \Aut(L) }$ by using known upper bounds on the maximal order of a finite subgroup of $\GL_n(\co)$.

\begin{theorem} \label{thm:autbound}
    Let $K$ be a totally real field of degree $d$, and let $L$ be a rank $n$ unimodular $\co$-lattice.   Then $ \card{ \Aut(L) } \leq 6^{dn/2} (n+1)!$ for all $n > 71$.
\end{theorem}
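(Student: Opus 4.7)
The plan is to realise $\Aut(L)$ as a finite subgroup of $\GL_n(K)$ and then apply the Schur-type bound recorded as Theorem~\ref{thm:schur}. Since $\Aut(L)$ acts $\co$-linearly and faithfully on $L$, it acts $K$-linearly on $L\otimes_\co K \cong K^n$, yielding an injection $\Aut(L)\hookrightarrow \GL_n(K)$; the image is automatically finite because $L$ is positive definite, so each ball $\{v\in L:\Tr_{K/\Q}Q(v)\le t\}$ is finite and $\Aut(L)$-stable. No basis of $L$ itself is needed, which is important since $L$ may fail to be a free $\co$-module.

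With $\Aut(L)$ sitting inside $\GL_n(K)$, Schur's bound will supply an estimate $|\Aut(L)|\le M(d,n)$ of the form $\prod_p p^{f_p(d,n)}$, where each exponent $f_p(d,n)$ is a sum of floor functions $\lfloor n/e_p\rfloor + \lfloor n/(p e_p)\rfloor + \cdots$, with $e_p$ encoding the cyclotomic/$p$-adic behaviour of $K$. My task will then be to massage this prime-power product into the clean form $6^{dn/2}(n+1)!$.

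To do this, I would extract $(n+1)!$ as a natural factor -- matching the $S_{n+1}$-Weyl group of type $A_n$ sitting inside $\GL_n$, whose $p$-adic valuation is $\sum_{k\ge1}\lfloor(n+1)/p^k\rfloor$ -- and then bound the residual $\prod_p p^{f_p(d,n)-\nu_p((n+1)!)}$ by $6^{dn/2}=2^{dn/2}\cdot 3^{dn/2}$. The contributions from $p=2$ and $p=3$ would be extracted explicitly and shown to be bounded by $2^{dn/2}$ and $3^{dn/2}$ respectively, while primes $p\ge 5$ should contribute only factors already dominated by $(n+1)!$ once $n$ is sufficiently large, which can be made quantitative via Chebyshev-type estimates on $\sum_{p\le x}\log p$.

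The hard part will be the explicit threshold $n>71$. For small $n$, exceptional large finite subgroups (for example Weyl groups of type $E_6$, $E_7$, $E_8$, or automorphism groups of classical root lattices) can exceed the clean bound $6^{dn/2}(n+1)!$, and one must verify by direct computation that the asymptotic estimate eventually wins uniformly in $d$. The number $71$ is presumably the least integer beyond which the simplified inequality holds uniformly, located by a numerical comparison between the exact Schur bound and $6^{dn/2}(n+1)!$ around the crossover; below the threshold the bound would either fail or require a separate, case-by-case statement.
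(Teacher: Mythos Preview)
Your route through Schur's prime-by-prime bound does not reach the stated inequality. The paper's proof is structurally different and much shorter: it invokes Collins' Jordan-type theorem, which says that for $n>71$ every finite subgroup $\Gamma\subset\GL_n(\C)$ contains an abelian normal subgroup $\Delta$ of index at most $(n+1)!$; then it embeds $\Delta\subset\GL_n(\co)\hookrightarrow\GL_{dn}(\Z)$ by restriction of scalars and applies Friedland's bound $|\Delta|\le 6^{dn/2}$ for finite abelian subgroups of $\GL_{dn}(\Z)$. The product of the two factors gives exactly $6^{dn/2}(n+1)!$, and the threshold $71$ is the explicit constant in Collins' theorem, not a numerically located crossover.

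Your proposed estimate cannot recover the same bound. The Schur product $\prod_\ell \ell^{M_K(n,\ell)}$ genuinely exceeds $6^{dn/2}(n+1)!$ for small $d$, most transparently for $K=\Q$. Indeed, the paper's own Theorem~\ref{thm:schurautbound} carries out precisely the computation you sketch and finds $\log\bigl(\prod_\ell \ell^{M_K(n,\ell)}\bigr)\le n\log n+\mathcal A\,n+O(\sqrt n\log n)$ with $\mathcal A\approx 0.574$ in the generic case, whereas $\log\bigl(6^{dn/2}(n+1)!\bigr)=n\log n+\bigl(\tfrac d2\log 6-1\bigr)n+O(\log n)$, and $\tfrac12\log 6-1\approx -0.104<\mathcal A$. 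Concretely, at the prime $2$ alone one has $M_\Q(n,2)\approx \tfrac{5n}{2}$ while $\nu_2((n+1)!)\approx n$, so the residual $2$-power is about $2^{3n/2}$, already larger than the $2^{n/2}$ allotted by $6^{n/2}$. Hence the plan of absorbing everything into $(n+1)!$ with leftover $6^{dn/2}$ fails; the factorisation $6^{dn/2}\cdot(n+1)!$ reflects the Jordan decomposition (abelian part $\times$ index), not the prime factorisation of a single Schur-type product.
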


\begin{proof}
    Let $\Gamma = \Aut(L)$ be the automorphism group of $L$, considered as a finite subgroup of $\GL_n(\mathcal{O}_K)$.  By a theorem of Collins \cite{Collins}, $\Gamma$ contains an abelian normal subgroup $\Delta$ whose index $[\Gamma : \Delta]$ is bounded above by $\leq (n+1)!$ if $n > 71$.

    We can embed $\GL_n(\mathcal{O}_K)$ into $\GL_{dn}(\Z)$ via a restriction of scalars; e.g. see \cite[p.~49]{PlatonovRapinchuk}.  Thus, $\Delta$ maps to a finite abelian subgroup of $\GL_{dn}(\Z)$.  By Friedland \cite{Friedland}, the maximal order of a finite abelian subgroup of $\GL_{dn}(\Z)$ is bounded by $6^{nd/2}$.
    This therefore gives the following upper bound for $\card{ \Aut(L) }$:
    \begin{equation*}
        \card{ \Aut(L) } = \card{ \Gamma } = \card{ \Delta } \cdot [\Gamma : \Delta] \leq 6^{nd/2} (n+1)!.\qedhere
    \end{equation*} %
\end{proof}

\textbf{Remark.}  We note that the finite subgroups of maximal order in $\GL_n(\Z)$ have been fully classified by Feit \cite{Feit}.  In particular, he showed that for $n > 10$, the maximal order of a finite subgroup of $\GL_n(\Z)$ is $2^n n!$, using some unpublished results of Weisfeiler \cite{Weisfeiler} on the structure of finite linear groups.\footnote{Weisfeiler announced his results on the structure of finite linear groups in August 1984, however he disappeared during a hiking trip in Chile soon afterwards.  The story of Weisfeiler's disappearance is an unfortunately sad one and is yet to be resolved; see \url{https://boris.weisfeiler.com/}.}

\subsection{Schur's bounds} Alternatively, we can derive bounds on the maximal order of finite subgroups of $\GL_n(\co)$ by using the following classical result by Schur \cite{Schur} to bound such subgroups (see Serre's note \cite[Section~2]{Serre} or Guralnick--Lorenz \cite[Section~5.3]{GuralnickLorenz} for an English translation of Schur's theorem)

\begin{theorem}[Schur {\cite{Schur}}] \label{thm:schur}
    Let $K$ be a number field and $\ell$ be a prime.  If $\ell > 2$, then let $t_{\ell} := [K(\zeta_\ell) : K]$ and let $m_{\ell}$ be the maximal integer $a$ such that $K(\zeta_\ell)$ contains $\zeta_{\ell^a}$.  Define
    \begin{equation*}
        M_K(n, \ell) := m_{\ell} \cdot \floor{ \frac{n}{t_{\ell}}} + \floor{ \frac{n}{\ell t_{\ell}}} + \floor{ \frac{n}{\ell^2 t_{\ell}}} + \dots .
    \end{equation*}
    If $\ell = 2$, then let $t_2 = [K(i) : K]$ and let $m_2$ be the maximal integer $a$ such that $K(i)$ contains $\zeta_{2^a}$.  Define
    \begin{equation*}
        M_K(n, 2) = n + m_2 \cdot \floor{ \frac{n}{t_2}} + \floor{ \frac{n}{2 t_2}} + \floor{ \frac{n}{4 t_2}} + \dots .
    \end{equation*}
    Let $G$ be a finite $\ell$-subgroup of $\GL_n(\C)$ such that $\Tr(g) \in K$ for all $g \in G$. Then $v_\ell(G) \leq M_K(n, \ell)$.
\end{theorem}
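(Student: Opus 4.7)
The plan is to follow the strategy of Schur's classical argument, reducing to irreducible representations and carefully counting the Galois orbits of $\ell$-power roots of unity over $K$. The scheme is broadly parallel to Minkowski's classical bound on finite subgroups of $\GL_n(\Z)$, but with the role of $\Q$ replaced by $K$, which is where the degrees $t_\ell$ and $m_\ell$ enter.

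First, I would reduce to the case where $G$ acts irreducibly on $\C^n$. Since $G$ is finite, Maschke's theorem gives a decomposition $\C^n = V_1 \oplus \cdots \oplus V_r$ into irreducible $G$-submodules, and $G$ embeds into $\prod_i \GL(V_i)$. The quantity $M_K(n,\ell)$ is a sum of floor functions in $n$ which is subadditive under $n = \sum_i \dim V_i$, so it suffices to prove the bound for each irreducible factor separately. At the same time, the trace hypothesis $\Tr(g)\in K$ is inherited from $G$ acting on $\C^n$ down to its image on each $V_i$, because $\Tr(g)$ on $V_i$ is a character of $G$ and lies in the $K$-subring generated by all traces.

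Next, for $G$ acting irreducibly on $\C^m$ as an $\ell$-subgroup, I would invoke the structure theory of finite $\ell$-groups: $G$ contains a maximal abelian normal subgroup $A$, and Clifford theory decomposes $\C^m$ into weight spaces for the characters of $A$, each of which is an $\ell$-power root of unity. The quotient $G/A$ acts faithfully by permutation on these characters, which bounds $v_\ell([G:A])$ linearly in $m$. For $A$ itself, the key step is that for each $g \in A$ of order exactly $\ell^k$, the trace $\Tr(g)\in K$ must be invariant under $\Gal(K(\zeta_{\ell^k})/K)$; since this Galois group has order $\ell^{\max(k-m_\ell,0)} \cdot t_\ell$, the number of $\ell^k$-th roots of unity that can appear (with their Galois orbits) in the weight-space decomposition is bounded by $\lfloor m/(\ell^{k-m_\ell} t_\ell) \rfloor$. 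Packaging this constraint across all $k$ gives exactly the Legendre-type expansion $m_\ell \lfloor m/t_\ell \rfloor + \lfloor m/(\ell t_\ell) \rfloor + \lfloor m/(\ell^2 t_\ell) \rfloor + \cdots$ as an upper bound on $v_\ell(|A|)$. Combining the bounds on $v_\ell(|A|)$ and $v_\ell([G:A])$ gives the desired $v_\ell(G) \le M_K(m,\ell)$.

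The main obstacle is the case $\ell = 2$, where the formula for $M_K(n,2)$ contains an extra additive term $n$ rather than an initial $m_2 \lfloor n/t_2\rfloor$. This reflects the fact that the scalar $-I$ has trace $-n \in K$ automatically, so its contribution to $v_2(G)$ is unrestricted by the trace hypothesis and must be separated out before the Galois-orbit counting. Handling this cleanly requires splitting $A$ into the subgroup generated by $-I$ and the rest, and carefully tracking how scalar $2$-elements interact with Clifford-theoretic induction. Additionally, verifying the subadditivity of $M_K(n,\ell)$ under direct-sum decompositions and reconciling the floor-function expression with the dimension constraints imposed by Clifford theory is the most technically delicate part of the argument.
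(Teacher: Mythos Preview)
The paper does not prove this theorem; it is quoted as a classical result of Schur and cited without proof (the paper simply refers the reader to Schur's original article and to the expositions of Serre and Guralnick--Lorenz). There is therefore no proof in the paper against which to compare your sketch.

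That said, your outline has a gap in the first reduction step. You assert that after decomposing $\C^n = \bigoplus_i V_i$ into irreducible $G$-submodules, the hypothesis $\Tr(g)\in K$ is inherited by each summand $V_i$. This is not automatic: knowing that $\sum_i \Tr_{V_i}(g)\in K$ does not force each individual $\Tr_{V_i}(g)$ to lie in $K$, and your justification (``lies in the $K$-subring generated by all traces'') is not a valid argument---the irreducible characters of $G$ need not take values in $K$ even when some $\Z$-linear combination of them does; in general $\Gal(\overline{K}/K)$ permutes the irreducible constituents nontrivially. In Schur's actual argument one exploits the monomial structure of $\ell$-groups directly on the full representation: since every $\ell$-group is an M-group, after a suitable change of basis all matrices of $G$ are monomial with $\ell$-power roots of unity as nonzero entries, so $\Tr(g)$ is a sum of such roots of unity, and the Galois-invariance constraint is applied to this full trace rather than to traces on irreducible pieces. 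The Legendre-type counting then proceeds without needing the trace hypothesis to descend. (A minor terminological point: what you need for the reduction is \emph{super}additivity of $M_K(\cdot,\ell)$, namely $M_K(n_1,\ell)+M_K(n_2,\ell)\le M_K(n_1+n_2,\ell)$, not subadditivity; this is indeed the inequality that floor functions satisfy.)
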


Using Schur's bound, we can derive the following asymptotic upper bound for the maximal order of a finite subgroup of $\GL_n(\co)$.

\begin{theorem} \label{thm:schurautbound}
    Let $K$ be a totally real field.  Let $M_n$ be the maximal order of a finite subgroup of $\GL_n(\co)$. Then $\log M_n \leq n \log n + \bigO{n}$ as $n \to \infty$.  In particular, if $K$ is a real quadratic field, then
    \begin{equation*}
        \log M_n \leq %
        \begin{cases}
            n \log n + (\mathcal{A} + \log(2)/2 ) n + \bigO{ \sqrt{n} \log n }  & \text{if } K = \Q(\sqrt{2}), \\[5pt]
            n \log n + \Big(\mathcal{A} + \frac{p \log p }{ (p-1)^2 } \Big) n  + \bigO{ \sqrt{n} \log{n} }  & \text{if } K = \Q(\sqrt{p}), p \equiv 1 \text{ mod } 4, \\[5pt]
            n \log n + \mathcal{A} n + \bigO{ \sqrt{n} \log{n} }  & \text{otherwise},
        \end{cases}
    \end{equation*}
    as $n \to \infty$, where $\mathcal{A}$ is the explicit real constant:
    \begin{equation*}
        \mathcal{A} := \frac{\log{2}}{2}  - 1 + \sum_{\ell \text{ prime}} \frac{\log (\ell)}{(\ell-1)^2} = 0.57352 \dots.
    \end{equation*}
\end{theorem}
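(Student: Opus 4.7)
The plan is to apply Schur's theorem (Theorem~\ref{thm:schur}) to each Sylow subgroup of a maximal finite subgroup $G \subseteq \GL_n(\co) \subseteq \GL_n(\C)$, and then sum the resulting bounds over all primes. Since every element of $G$ has trace in $\co \subset K$, Schur's bound gives $v_\ell(|G|) \leq M_K(n, \ell)$ for each prime $\ell$, and hence
\[
\log M_n \;=\; \sum_\ell v_\ell(|G|)\log \ell \;\leq\; \sum_\ell M_K(n, \ell)\log \ell.
\]

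First I would determine the local invariants $t_\ell$ and $m_\ell$. Because $K$ is totally real, $i \notin K$ and so $t_2 = [K(i):K] = 2$ always. The root $\zeta_4 = i$ lies in $K(i)$, giving $m_2 \geq 2$; the root $\zeta_8$ lies in $K(i)$ iff $\sqrt{2} = \zeta_8 + \zeta_8^{-1} \in K$, while higher $2$-power roots of unity are excluded for $K$ real quadratic. Thus $m_2 = 3$ exactly when $K = \Q(\sqrt{2})$ and $m_2 = 2$ otherwise. For odd primes $\ell$ the extension $K(\zeta_\ell)/K$ has degree $(\ell-1)/[K \cap \Q(\zeta_\ell):\Q]$, and the unique quadratic subfield of $\Q(\zeta_\ell)$ is $\Q(\sqrt{\ell^*})$ with $\ell^* = (-1)^{(\ell-1)/2}\ell$, which is totally real exactly when $\ell \equiv 1 \pmod 4$. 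Combining these facts, the only real quadratic $K$ with a shrunken $t_\ell$ is $K = \Q(\sqrt{p})$ with $p \equiv 1 \pmod 4$, in which case $t_p = (p-1)/2$ while every other $t_\ell = \ell - 1$ and $m_\ell = 1$. In all remaining real quadratic cases the invariants take their generic values $t_2 = 2,\, m_2 = 2,\, t_\ell = \ell-1,\, m_\ell = 1$.

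The asymptotic evaluation of the sum then reduces to Mertens-type estimates. The $\ell = 2$ contribution is
\[
M_K(n, 2)\log 2 \;=\; \Big(n + m_2\lfloor n/2\rfloor + \sum_{k \geq 2}\lfloor n/2^k\rfloor\Big)\log 2 \;=\; \tfrac{(m_2+3)n}{2}\log 2 + O(\log n),
\]
which equals $\tfrac{5n}{2}\log 2$ in the generic case and increases by $\tfrac{n}{2}\log 2$ for $K = \Q(\sqrt{2})$. For odd primes the geometric series in Schur's formula gives $M_K(n, \ell) \leq n\ell/(t_\ell(\ell-1))$, and Mertens' theorem in the form $\sum_{p \leq x}\log p/p = \log x + O(1)$ then extracts the main term $n \log n$. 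The doubled contribution from the single exceptional prime $\ell = p$ when $K = \Q(\sqrt{p})$ accounts for the boosted $p \log p/(p-1)^2$ in the coefficient of $n$. The main obstacle I expect is driving the error term down to $O(\sqrt{n}\log n)$: I would split the odd-prime sum at $\ell = \sqrt{n}$, bounding the rounding errors in the small range by $\sum_{\ell \leq \sqrt{n}}\log \ell \cdot O(\log n/\log \ell) = O(\pi(\sqrt{n})\log n) = O(\sqrt{n}\log n)$, and using only the $k = 0$ term of Schur's formula for $\ell > \sqrt{n}$ together with partial summation against a quantitative form of Mertens to keep the large-range error within the same order. Carefully assembling the convergent tails $\sum_\ell \log \ell/(\ell-1)^2$ and $\sum_\ell \log \ell/(\ell(\ell-1))$ then collects the constant into $\mathcal{A}$ as defined.
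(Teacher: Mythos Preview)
Your overall strategy---apply Schur's bound prime by prime, sum over $\ell$, and determine $t_\ell,m_\ell$ for real quadratic $K$---matches the paper exactly, and your analysis of the invariants (in particular that $m_2=3$ only for $\Q(\sqrt 2)$ and that $t_\ell<\ell-1$ only for $\ell=p\equiv 1\pmod 4$ when $K=\Q(\sqrt p)$) is correct.

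The difference lies in how the main term $n\log n$ is extracted. The paper does not sum $\sum_\ell M_K(n,\ell)\log\ell$ directly; instead it introduces $\mathcal F_n=(n+\lfloor\sqrt n\rfloor)!$ and compares $M_K(n,\ell)$ to $v_\ell(\mathcal F_n)$ via Legendre's formula. For $\ell>\sqrt n$ one has $M_K(n,\ell)\le v_\ell(\mathcal F_n)$ outright, so the large-prime contribution is absorbed into $\log\mathcal F_n=n\log n-n+O(\sqrt n\log n)$ by Stirling, and only the differences $M_K(n,\ell)-v_\ell(\mathcal F_n)$ for $\ell\le\sqrt n$ need to be bounded. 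This yields the constant $\mathcal A$ and the $O(\sqrt n\log n)$ error without any appeal to Mertens or the prime number theorem.

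Your proposed route---partial summation against Mertens for the range $\ell>\sqrt n$---does give the $O(n)$ statement, but it is not clear it reaches $O(\sqrt n\log n)$. Writing $\lfloor n/(\ell-1)\rfloor=n/(\ell-1)-\{n/(\ell-1)\}$, the fractional-part sum $\sum_{\sqrt n<\ell\le n}\{n/(\ell-1)\}\log\ell$ is of size $\Theta(n)$ in general, and Mertens' theorem (even in a sharpened form) controls only the smooth part $n\sum\log\ell/(\ell-1)$ to within $O(n)$. So the refined real-quadratic bounds have a genuine gap in your sketch: the missing idea is precisely the comparison to a shifted factorial, which converts the delicate large-$\ell$ sum into a single Stirling estimate.
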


\begin{proof}
    We fix some totally real field $K$ of degree $d$ and discriminant $\Delta_K$, and we assume that $n$ is sufficiently large; in particular we will assume that $n > \Delta_K^2$ and $n > d^2$.   Let $\mathcal{F}_n := \left( n + \floor{\sqrt{n}} \right)!$.  We first recall Legendre's formula which states
    \begin{equation} \label{eq:legendre}
        v_\ell(\mathcal{F}_n) = \floor{\frac{n + \floor{\sqrt{n}}}{\ell}} + \floor{\frac{n + \floor{\sqrt{n}}}{\ell^2}} + \floor{\frac{n + \floor{\sqrt{n}}}{\ell^3}} + \dots.
    \end{equation}
    Also note that, by Stirling's approximation, we have
    \begin{align*}
        \log \mathcal{F}_n &=  \left( n + \floor{\sqrt{n}} \right) \log \left( n + \floor{\sqrt{n}} \right) - \left( n + \floor{\sqrt{n}} \right) + O( \log n) \\
         &= n \log {n} - n + \bigO{ \sqrt{n} \log n}.
    \end{align*}
    
    We first show that $v_\ell( \mathcal{F}_n ) \geq M_K(n, \ell)$ for all primes $\ell > \sqrt{n}$.  Indeed, if $\ell > \sqrt{n}$, then $\ell$ is unramified in $K$, and $t_\ell = [K(\zeta_{\ell}) : K] = \ell - 1$ and $m_{\ell} = 1$, thus
    \begin{align*}
        M_K(n, \ell) &= m_{\ell} \cdot \floor{ \frac{n}{t_{\ell}}} + 
        \sum_{i=1}^\infty \floor{ \frac{n}{\ell^i t_\ell}}
        = \sum_{i=0}^\infty \floor{ \frac{n}{\ell^i (\ell-1)}}
        = 
        \sum_{i=1}^\infty \floor{ \frac{n (1 + \frac{1}{\ell-1})}{\ell^i}}
        \leq v_\ell ( \mathcal{F}_n ),
    \end{align*}
    where the last inequality follows from (\ref{eq:legendre}).

    Now let's consider the case of odd primes $\ell < \sqrt{n}$ such that $[K(\zeta_{\ell}) : K] = \ell - 1$ and $\ell > d$. Again we have $t_{\ell} = \ell - 1$ and $m_{\ell} = 1$.  Thus, we have the following:
    \begin{align*}
        M_K(n, \ell) - v_\ell ( \mathcal{F}_\ell ) 
        &\leq 
        \sum_{i=0}^\infty \floor{ \frac{n}{\ell^i (\ell-1)}} - \sum_{i=1}^\infty \floor{ \frac{n}{\ell^i }}
        \leq 
        \sum_{i=0}^\infty  \frac{n}{\ell^i (\ell-1)} - \sum_{i=1}^\infty  \frac{n}{\ell^i } 
        + \log_\ell(n) 
        \\
        &\leq n \Big( \frac{1/(\ell-1)}{1 - 1/\ell} - \frac{1/\ell}{1 - 1/\ell} \Big) + \log_\ell(n) \\
        &= \frac{n}{(\ell-1)^2} + \log_\ell(n).
    \end{align*}

    Now, let us consider the remaining small odd primes, where either $\ell < d$ or $[K(\zeta_{\ell}) : K] < \ell - 1$.  By tower law, we have $[K(\zeta_\ell) : K] \cdot [ K : \Q] = [K(\zeta_\ell) : \Q] \geq [\Q(\zeta_\ell) : \Q] = \ell - 1$.  This gives the lower bound $t_{\ell} = [K(\zeta_\ell) : K] \geq (\ell-1)/d$. Similarly $m_\ell \leq \floor{ \log_\ell(d) } + 1$. Thus
    \begin{align*}
        M_K(n, \ell) - v_\ell ( \mathcal{F}_n ) 
        &\leq 
        \floor{ \log_{\ell}(d) } \floor{ \frac{dn}{(\ell-1)}} + \sum_{i=0}^\infty \floor{ \frac{dn}{\ell^i (\ell-1)}} - \sum_{i=1}^\infty \floor{ \frac{n}{\ell^i}}
        \\
        & \leq \frac{\floor{ \log_{\ell}(d) } dn}{(\ell-1)} +  
        \sum_{i=0}^\infty \frac{dn}{\ell^i (\ell-1)} - \sum_{i=1}^\infty  \frac{n}{\ell^i}
        + \log_\ell(n) \\
        &\leq \frac{\floor{ \log_{\ell}(d) } dn}{(\ell-1)} + \frac{n ((d-1) \ell + 1)}{(\ell-1)^2} + \log_\ell(n).
    \end{align*}
    Finally, for the prime $\ell = 2$, as $K$ is totally real, we have $t_{2} = 2$ and $m_2 \leq \floor{\log_2(d)} + 2$.  In particular
    \begin{align*}
        M_K(n, 2)  \leq n + (\floor{\log_2(d)} + 1) \floor{ \frac{n}{2}} + v_2 (\mathcal{F}_n ).
    \end{align*}

    By putting together the contributions from all primes $\ell$, we get the following bound for $\log M_n - \log \mathcal{F}_n$:
    \begin{align*}
        \log M_n  - \log{ \mathcal{F}_n} &\leq \sum_{\ell \text{ prime}} \big( M_K(n, \ell) - v_\ell( \mathcal{F}_n)  \big) \log \ell \\
        &\leq  \frac{\floor{\log_2{d} } + 3}{2} n \log 2 + 
        \sum_{\substack{\ell \text{ odd} \\ \ell < \sqrt{n} }} \bigg( \frac{n}{(\ell-1)^2} + \log_\ell(n) \bigg) \log{\ell} \\
        &\quad + \sum_{\substack{\ell \text{ odd, } \ell < \sqrt{n} \\ [K(\zeta_\ell) : K] < \ell - 1 \text{ or } \ell < d}} n \bigg(  \frac{\floor{ \log_\ell(d) } d}{(\ell-1)} + \frac{ (d-1) \ell }{(\ell-1)^2}   \bigg) \log \ell.
    \end{align*}
    Therefore, recalling that $\log \mathcal{F}_n = n \log n - n + O(\sqrt{n} \log n)$, we get the following asymptotic upper bound for $\log M_n$:
    \begin{align*}
        \log M_n &\leq n \log n + \bigg(  \Big( \frac{\floor{\log_2{d}} + 1}{2} \Big) \log 2 - 1 +   \sum_{\ell \text{ prime}}  \frac{\log \ell}{(\ell-1)^2} \\[5pt]
        &\qquad +  \sum_{\substack{\ell \text{ odd prime} \\ [K(\zeta_\ell) : K] < \ell-1}}  \bigg( \frac{\floor{ \log_{\ell}(d) } d}{(\ell-1)} + \frac{ (d-1) \ell}{(\ell-1)^2}  \bigg) \log \ell   \bigg) n + \bigO{ \sqrt{n} \log n }.
    \end{align*}

    Note that the sum $\sum_{\ell \text{ prime}} \log \ell / (\ell-1)^2$ converges and equals $\approx 0.53381 \dots$.  This proves that $\log M_n = n \log n + \bigO{n}$ as $n \to \infty$.

    \bigskip
    Let's now consider the case of real quadratic fields $K$, where $d = 2$.  Then for any odd prime $\ell$, clearly $m_{\ell} = 1$.  This gives us the upper bound:
    \begin{align*}
        \log M_n &\leq n \log n  + \Bigg(  \Big( \frac{m_2 - 1}{2} \Big) \log{2}   -1  +\sum_{\ell \text{ prime}}  \frac{\log \ell}{(\ell-1)^2} + \sum_{\substack{\ell \text{ odd prime} \\ [K(\zeta_\ell) : K] < \ell-1}} \frac{\ell \log \ell}{(\ell-1)^2}   \Bigg) n \\
        &\qquad + \bigO{ \sqrt{n} \log n }.
    \end{align*}
    For $\ell = 2$, note that $m_2 = 3$ if $K = \Q(\sqrt{2})$, otherwise $m_2 = 2$.   
    For an odd prime $\ell$, we recall that the unique quadratic subfield of $\Q(\zeta_{\ell})$ is $\Q(\sqrt{(-1)^{(\ell-1)/2} \ell})$; e.g. see \cite[p.~202]{IrelandRosen}.  Thus, as $K$ is real quadratic, then if $[K (\zeta_\ell) : K] < \ell - 1$, then this implies $\ell \equiv 1$ (mod 4) and $K = \Q(\sqrt{\ell})$.  
    This therefore proves the theorem.
\end{proof}

\begin{theorem} \label{thm:classlowerasymp}
    Let $K$ be a totally real field of degree $d$ and discriminant $\Delta_K$. Then the logarithm of the class number of $I_n$ satisfies the lower bound
    \begin{align*}
        \log \cl_K(I_n) &\geq \frac{d}{4} n^2 \log{n} + n^2 \Big( \nquadcoeff \Big)  - \frac{d}{4} n \log n  \\
        &\quad + n \bigg(  \nlinearcoeff \bigg) + \bigO{ \log{n} } \nonumber
    \end{align*}
    as $n \to \infty$.
\end{theorem}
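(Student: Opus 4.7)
The plan is to derive this lower bound as an immediate consequence of the asymptotic expansion~(\ref{eq:logmassasymp}) for $\log \massI{K}{n}$. The bridge from the mass constant to the class number is provided by the elementary observation that $\card{\Aut(\Lambda)} \geq 2$ for every nondegenerate $\co$-lattice $\Lambda$, since the scalar $-\mathrm{id}$ is always a nontrivial isometry.

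Applying this uniformly to every isometry class in the genus of $I_n$ gives
\[
\massI{K}{n} = \sum_{\Lambda \in \gen(I_n)} \frac{1}{\card{\Aut(\Lambda)}} \leq \frac{\cl_K(I_n)}{2},
\]
and rearranging yields $\cl_K(I_n) \geq 2\, \massI{K}{n}$. Taking logarithms then produces $\log \cl_K(I_n) \geq \log 2 + \log \massI{K}{n}$, and substituting the explicit expansion~(\ref{eq:logmassasymp}) for $\log \massI{K}{n}$ immediately yields the claimed inequality, as the additive constant $\log 2$ is absorbed into the $\bigO{\log n}$ error term.

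There is no real obstacle here. All of the analytic input, namely K\"orner's exact formula for $\massI{K}{n}$, Kellner's asymptotic for the product $\prod_{i=1}^n \Gamma(i/2)$, the convergence $\sigma_n \to 1$, the stabilisation of the dyadic factors $\prod_{\fp \mid 2} \xi(n,\fp)$, and the convergence of $\prod_{i=1}^{\lfloor(n-1)/2\rfloor} \zeta_K(2i)$, has already been assembled in Theorem~\ref{thm:massasymp} and expressed in logarithmic form in~(\ref{eq:logmassasymp}). The present theorem therefore reduces to the two-line automorphism inequality above, with the quantitative content read off directly from~(\ref{eq:logmassasymp}); the only mild subtlety is the bookkeeping required to confirm that the $\log 2$ constant is indeed dominated by the $\bigO{\log n}$ error.
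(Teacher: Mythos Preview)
Your proposal is correct and follows essentially the same approach as the paper: the paper simply invokes the trivial bound $2\,\massI{K}{n} \leq \cl_K(I_n)$, takes logarithms, and appeals to the asymptotic for $\log\massI{K}{n}$ established earlier (via Theorem~\ref{thm:massasymp} and its logarithmic form~(\ref{eq:logmassasymp})). Your version is slightly more explicit in justifying the automorphism lower bound via $-\mathrm{id}$, but the argument is otherwise identical.
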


\begin{proof}
    By the trivial bound $2 \massI{K}{n} \leq \cl_K(I_n)$, we have the bound
    \begin{equation*}
        \log 2 + \log \massI{K}{n} \leq \log \cl_K(I_n). %
    \end{equation*}
    Now using 
    the asymptotic for $\massI{K}{n}$ in Theorem~\ref{thm:massasymp} yields the desired asymptotic lower bound for $\log \cl_K(I_n)$.
\end{proof}

\begin{theorem} \label{thm:ULasymp}
    Let $K$ be a totally real field of degree $d$ and discriminant $\Delta_K$.  Then the number of isometry classes of rank $n$ unimodular lattices $L$, denoted by $\UL{K}{n}$, satisfies the lower bound
    \begin{align} \label{eq:ULlower}
        \log \UL{K}{n} &\geq \frac{d}{4} n^2 \log{n} + n^2 \Big( \nquadcoeff \Big)  - \frac{d}{4} n \log n \\ %
        &\quad + n \bigg(  \nlinearcoeff \bigg) + \bigO{ \log{n} } ,\nonumber
    \end{align}
    and the upper bound
    \begin{align} \label{eq:ULupper}
        \log \UL{K}{n} &\leq \frac{d}{4} n^2 \log{n} + n^2 \Big( \nquadcoeff \Big)  + \Big(1 - \frac{d}{4} \Big) n \log n \\ %
        &\quad + n \bigg(  \nlinearcoeffa + \frac{d \log 6}{2} - 1 \bigg) + \bigO{ \log{n} } \nonumber
    \end{align}
    as $n \to \infty$.
\end{theorem}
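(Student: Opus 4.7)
\bigskip

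\noindent\emph{Proof plan.} Both bounds are obtained by packaging the asymptotic estimates for the Siegel mass constant from Section~\ref{sec:mass} together with the automorphism bound from Section~\ref{sec:unimodular}. The lower bound will be essentially free; the work is in the upper bound.

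\medskip

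\noindent\emph{Lower bound.} Since every isometry class represented in $\gen(I_n)$ contributes to $\UL{K}{n}$, the trivial inequality
\[
\UL{K}{n} \;\geq\; \cl_K(I_n)
\]
holds. The desired asymptotic (\ref{eq:ULlower}) is therefore immediate from Theorem~\ref{thm:classlowerasymp}, whose right-hand side coincides term by term with (\ref{eq:ULlower}).

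\medskip

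\noindent\emph{Upper bound.} I would partition the set of rank $n$ unimodular $\co$-lattices into genera and write $\UL{K}{n} = \sum_G \cl_K(G)$, where $G$ ranges over all such genera. For each genus $G$, choose any representative $L \in G$; then from the definition (\ref{eq:mass}) of the Siegel mass constant,
\[
\cl_K(G) \;\leq\; \mass{K}{L} \cdot \max_{\Lambda \in G}\, \card{\Aut(\Lambda)}.
\]
I would bound $\card{\Aut(\Lambda)}$ by the bound $6^{dn/2}(n+1)!$ from Theorem~\ref{thm:autbound}, whose logarithm is $n\log n + \bigl(\tfrac{d\log 6}{2} - 1\bigr)n + \bigO{\log n}$, and I would bound $\log \mass{K}{L}$ using the inequality (\ref{eq:logmassasymp2}). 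Adding the two asymptotic expressions gives
\[
\log \cl_K(G) \;\leq\; \frac{d}{4} n^2 \log n + n^2\Bigl(\nquadcoeff\Bigr) + \Bigl(1 - \frac{d}{4}\Bigr)n\log n + \Bigl(\nlinearcoeffa + \frac{d\log 6}{2} - 1\Bigr) n + \bigO{\log n},
\]
which is exactly the right-hand side of (\ref{eq:ULupper}). To conclude, one just needs the number of such genera to grow at most polynomially (in fact, at most as $\bigO{1}$) in $n$, so that the sum over genera only perturbs the estimate by $\bigO{\log n}$.

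\medskip

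\noindent\emph{Bounding the number of genera, and the main obstacle.} The main (modest) obstacle is to verify that the number of genera of rank $n$ unimodular $\co$-lattices is bounded uniformly in $n$. I would argue as follows: any quadratic $K$-space $V$ of dimension $n$ supporting a unimodular $\co$-lattice is totally positive definite, has discriminant in the image of $\co^\times/(\co^\times)^2$ (which is of order $2^d$), and at each non-archimedean place has prescribed Hasse invariant subject only to the product formula, with the invariant forced at all but finitely many places; by Hasse--Minkowski there are therefore only finitely many such $V$, with a bound depending only on $K$. On each such space, the number of distinct unimodular genera is controlled by the choice of norm groups at dyadic primes, which by (the proof of) Proposition~\ref{upper bound num genera} is at most the constant $B$. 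Combining these two bounds, the total number of unimodular genera of rank $n$ is $\bigO[K]{1}$, so its logarithm is absorbed into $\bigO{\log n}$, completing the proof of (\ref{eq:ULupper}).
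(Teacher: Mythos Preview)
Your proposal is correct and follows the same approach as the paper: the lower bound via $\UL{K}{n} \geq \cl_K(I_n)$ and Theorem~\ref{thm:classlowerasymp}; the upper bound by summing over genera, bounding each class number by $\mass{K}{L}\cdot\max_{\Lambda}\card{\Aut(\Lambda)}$ via Theorems~\ref{thm:massupperbound} and~\ref{thm:autbound}, and bounding the number of genera by an $O_K(1)$ constant absorbed into the $O(\log n)$ term. Your handling of the genera count is, if anything, more careful than the paper's, which simply inserts the constant $\sqrt{5}^d$ from Proposition~\ref{upper bound num genera} without a separate Hasse--Minkowski argument for the number of ambient quadratic spaces.
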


\begin{proof}
    Since we clearly have the lower bound $\cl_K(I_n) \leq \UL{K}{n}$, Theorem~\ref{thm:classlowerasymp} gives the lower bound in (\ref{eq:ULlower}).  For the upper bound, let $\SUL{K}{n}$ denote the set of all rank $n$ unimodular $\co$-lattices, and let $\mathcal{G}_n$ denote the set of genera of rank $n$ unimodular $\co$-lattices. Then we note that
    \begin{equation*}
        \UL{K}{n} = \sum_{L \in \mathcal{G}_n} \cl_K(L) \leq \card{ \mathcal{G}_n } \cdot \max_{L \in \SUL{K}{n}} \big( \mass{K}{L} \cdot \card{ \Aut(L) } \big).
    \end{equation*}

    By thus using our upper bounds for $\mass{K}{L}$ from Theorem \ref{thm:massupperbound} and our upper bound for the automorphism group $\Aut(L)$ from Theorem \ref{thm:autbound}, we obtain the following upper bound:
    \begin{equation*}
        \UL{K}{n} \leq \sqrt{5}^d \sigma_n \left( \frac{2^{30} \Gamma(\frac{1}{2}) \Gamma(\frac{2}{2}) \cdots \Gamma(\frac{n}{2})}{\pi^{n(n+1)/4}}   \right)^{d}  \Delta_K^{n(n-1)/4}  
        \prod_{i=1}^{\lfloor (n-1)/2 \rfloor} \zeta_K(2i) \cdot 6^{nd/2} (n+1)!.
    \end{equation*}

    By thus taking logs and using the asymptotic in (\ref{eq:logmassasymp2}), we obtain the upper bound given in (\ref{eq:ULupper}).  This proves the claim.
\end{proof}

\section{Lower bounds for $\MRU{K}{n}$} \label{sec:lower}

In order to relate a lower bound for the number of rank $n$ unimodular $\co$-lattices to a lower bound for the number of rank $n$ \emph{indecomposable} unimodular $\co$-lattices, we aim to prove that $\IUL{K}{n} \sim \UL{K}{n}$ as $n \to \infty$ by using the following theorem of Wright \cite{Wright68}:

\begin{theorem}[Wright {\cite{Wright68}}] \label{thm:wright} Let $\{T_n\}_{n \geq 1}$ and $\{t_n\}_{n\geq 1}$ be two sequences such that $T_n$ is the Euler transform of $t_n$, i.e. where
\begin{equation*}
    1 + \sum_{n=1}^\infty T_n X^n = \prod_{k=1}^\infty (1 - X^k)^{-t_k}.
\end{equation*}
Then if 
\begin{equation} \label{eq:Wrightcondition}
    \sum_{s=1}^{n-1} H_s H_{n-s} = o(H_n)
\end{equation}
as $n \to \infty$, where the sequence $\{H_n\}$ is either $\{T_n\}$ or $\{t_n\}$, then $T_n \sim t_n$ as $n \to \infty$.
\end{theorem}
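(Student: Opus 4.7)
The plan is to convert the multiplicative Euler-product identity into an additive recursion via logarithmic differentiation, and then to extract the asymptotic $T_n \sim t_n$ by a strong induction. Set $F(X) := 1 + \sum_{n\geq 1} T_n X^n$ and $\sigma_n := \sum_{k \mid n} k\, t_k$. Applying $X\, \tfrac{d}{dX}$ to $\log F(X) = -\sum_k t_k \log(1-X^k)$ gives $X F'(X)/F(X) = \sum_{n \geq 1} \sigma_n X^n$. Cross-multiplying by $F(X)$ and reading off the coefficient of $X^n$ yields the workhorse recursion
\begin{equation*}
    n T_n \;=\; n\, t_n \;+\; \sum_{\substack{k \mid n \\ k < n}} k\, t_k \;+\; \sum_{s=1}^{n-1} T_s\, \sigma_{n-s}.
\end{equation*}
The theorem therefore reduces to showing that both error terms on the right-hand side are $o(n\, t_n)$ as $n \to \infty$.

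For the divisor sum, every $k \mid n$ with $k < n$ satisfies $k \leq n/2$, and the hypothesis $\sum_{s=1}^{n-1} t_s t_{n-s} = o(t_n)$ (taking the $t$-version of the hypothesis first) forces $t_k = o(t_n)$ uniformly in this range: indeed $t_k \cdot t_{n-k}$ is majorised by the convolution, while $t_{n-k}$ is bounded below by a positive constant for large $n$ (since the hypothesis forces $t_n \to \infty$). A short divisor count then gives $\sum_{k \mid n,\, k < n} k t_k = o(n t_n)$. For the main convolution I would induct on $n$ with the quantitative form $T_s \leq C_0\, t_s$ for a uniform constant $C_0$. Expanding $\sigma_{n-s} = \sum_{d \mid n-s} d\, t_d$ and swapping the order of summation bounds the double sum by a constant multiple of $n \sum_{s=1}^{n-1} t_s t_{n-s}$, which is $o(n t_n)$ by hypothesis. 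Dividing the recursion by $n t_n$ closes the induction step and yields $T_n/t_n \to 1$. The symmetric case, where the hypothesis is imposed on $\{T_n\}$ instead, follows identically after the elementary bound $t_n \leq T_n$ (from the combinatorial interpretation of $T_n$ as counting multisets of $t$-generated objects of total size $n$, with $t_n$ counting the singletons).

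The main obstacle will be propagating the inductive estimate $T_s \leq C_0\, t_s$ uniformly in $s < n$, since $C_0$ must be chosen once and simultaneously absorb the initial range of indices while remaining stable under the convolution. The cleanest way around this circularity is a two-step induction: first establish $\limsup_n T_n/t_n < \infty$ using a soft comparison (bounding $T_n$ by $t_n$ plus a convolution tail already known to be $o(t_n)$), and only then refine to $T_n/t_n \to 1$ using the full strength of the hypothesis. A secondary technical point is that $\sigma_m$ is a divisor-weighted sum rather than simply $m\, t_m$, so the comparison with the self-convolution of $t$ loses a factor whose tameness depends on average divisor counts $\sum_{m \leq n} \sigma_m/(m\, t_m)$ remaining controlled; handling this cleanly is what makes the argument genuinely analytic rather than purely formal.
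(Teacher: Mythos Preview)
The paper does not prove this statement at all: Theorem~\ref{thm:wright} is quoted from Wright~\cite{Wright68} and invoked as a black box in the proofs of Theorems~\ref{thm:wrightasymp} and~\ref{thm:wrightasymp2}. There is therefore no ``paper's own proof'' to compare against.

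That said, your outline follows the standard route (and indeed Wright's own): logarithmic differentiation produces the recursion $nT_n = \sigma_n + \sum_{s=1}^{n-1} T_s\,\sigma_{n-s}$ with $\sigma_m = \sum_{d\mid m} d\,t_d$, and the task is to show the two error pieces are $o(n t_n)$. The genuine weak point in your sketch is the claim that, after substituting $T_s \leq C_0 t_s$ and expanding $\sigma_{n-s}$, the resulting double sum is bounded by a constant times $n\sum_{s} t_s t_{n-s}$. Swapping the order of summation gives $\sum_d d\,t_d \sum_{s \equiv n \pmod d} t_s$, and the inner sum is over an arithmetic progression, not the single index $n-d$; without further growth information on $t$ this does not collapse to the self-convolution of $\{t_n\}$. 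You flag this yourself, but it is the crux of the argument, not a secondary technicality: one needs to first extract from the hypothesis that $t_k t_{n-k} = o(t_n)$ uniformly, hence that $t$ grows fast enough that $\sigma_m \sim m\,t_m$, and only then does the convolution comparison go through. Your two-step induction (first $\limsup T_n/t_n < \infty$, then the limit) is the right scaffolding, but the middle step connecting $\sigma$-convolutions to $t$-convolutions needs to be made explicit before the proof is complete.
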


\subsection{Case $d \geq 3$} By applying Wright's theorem, we can thus prove the following:

\begin{theorem} \label{thm:wrightasymp}
    Let $K$ be a totally real field of degree $d \geq 3$. Then $\IUL{K}{n} \sim \UL{K}{n}$ as $n \to \infty$.  That is, the number of indecomposable rank $n$ unimodular $\co$-lattices is asymptotic to all rank $n$ unimodular $\co$-lattices as $n \to \infty$.
\end{theorem}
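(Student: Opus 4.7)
The plan is to apply Wright's criterion (Theorem~\ref{thm:wright}) with $H_n = \UL{K}{n}$, using the Euler transform identity
\[
 \prod_{n=1}^\infty (1 - X^n)^{-\IUL{K}{n}} = 1 + \sum_{n=1}^\infty \UL{K}{n} X^n
\]
recorded at the end of Section~\ref{sec:indecomposables}. To deduce $\IUL{K}{n} \sim \UL{K}{n}$ it then suffices to verify
\[
 \sum_{s=1}^{n-1} \UL{K}{s}\,\UL{K}{n-s} = o(\UL{K}{n}) \qquad (n \to \infty).
\]

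The key input is that the two-sided estimates in Theorem~\ref{thm:ULasymp} agree through the $n^2$-order term. Combining them gives
\[
 \log \UL{K}{n} = \tfrac{d}{4} n^2 \log n + \kappa\, n^2 + \beta_n\, n \log n + O(n),
\]
where $\kappa = \nquadcoeff$ and $\beta_n \in [-d/4,\, 1 - d/4]$, with the $O(n)$ constant depending only on $K$. Setting $f(s) := s^2 \log s + (n-s)^2 \log(n-s)$ and using $s^2 + (n-s)^2 - n^2 = -2 s(n-s)$, the quantity I need to control becomes
\[
 \log \UL{K}{s} + \log \UL{K}{n-s} - \log \UL{K}{n} = \tfrac{d}{4}\bigl(f(s) - n^2 \log n\bigr) - 2 \kappa s(n-s) + E(s,n) + O(n),
\]
with the residual $E(s,n)$ coming from the three $n\log n$ terms. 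Convexity of $x^2 \log x$ shows $f$ attains its maximum on $\{1,\dots,n-1\}$ at the endpoints, and a direct expansion yields $f(1) - n^2 \log n = -2 n \log n + O(n)$; more generally, with $\alpha = s/n$, I obtain $f(s) - n^2 \log n = -2\alpha(1-\alpha) n^2 \log n + O(n^2)$. The trivial bound $|\beta_\bullet| \le d/4$ combined with $s\log s + (n-s)\log(n-s) \le n\log n$ forces $|E(s,n)| \le \tfrac{d}{2} n \log n + O(n)$.

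Putting these ingredients together, at the extreme case $s = 1$ (and $s = n-1$) I obtain
\[
 \log \UL{K}{1} + \log \UL{K}{n-1} - \log \UL{K}{n} \le (1 - d/2) n \log n + O(n),
\]
where the $-2\kappa s(n-s)$ contribution is only $O(n)$ since $s(n-s) = n-1$. For $d \ge 3$, the coefficient $1 - d/2 \le -1/2$, so this ratio is $\le \exp(-\tfrac{1}{2} n \log n + O(n))$. For $s$ bounded away from the endpoints, the main term $-\tfrac{d}{2}\alpha(1-\alpha) n^2 \log n$ is of order $-\Omega(n^2 \log n)$, which easily absorbs both the $-2\kappa s(n-s) = O(n^2)$ contribution and the $O(n\log n)$ error. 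Summing over $s = 1,\dots,n-1$ gives a bound of size $n \cdot \exp(-\tfrac{1}{2} n \log n + O(n)) = o(1)$, so Wright's condition holds and Theorem~\ref{thm:wright} concludes the proof.

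The delicate point — and the reason the argument requires $d \ge 3$ — is that the $n \log n$ coefficients in the upper and lower bounds of Theorem~\ref{thm:ULasymp} do not match, so I must rely on the crude bound $|\beta_\bullet| \le d/4$. This produces the coefficient $1 - d/2$ above, which is negative precisely when $d \ge 3$. The $\kappa$-correction is harmless: whether $\kappa$ is positive or negative, its contribution is dominated by the main term uniformly in $s$, since $|2\kappa s(n-s)| \le |\kappa| n^2/2$ is of lower order than $\alpha(1-\alpha) n^2 \log n$ for $s$ in the bulk, and is only $O(n)$ at the endpoints where the main term already beats it.
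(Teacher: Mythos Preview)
Your approach is the same as the paper's: verify Wright's hypothesis \eqref{eq:Wrightcondition} with $H_n = \UL{K}{n}$ using the two-sided estimate of Theorem~\ref{thm:ULasymp}. However, the argument as written has a genuine gap in establishing the \emph{uniform} bound needed to pass from your two cases (``$s=1$'' and ``$s$ bounded away from the endpoints'') to the conclusion ``summing gives $n\cdot\exp(-\tfrac12 n\log n + O(n))$''.

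The problem is your estimate $|E(s,n)|\le \tfrac{d}{2}n\log n$. Combined with the convexity bound $f(s)\le f(1)$, this yields only
\[
G(s)\ \le\ -\tfrac{d}{2}n\log n + \tfrac{d}{2}n\log n + |2\kappa|\,s(n-s) + O(n)\ =\ |2\kappa|\,s(n-s)+O(n),
\]
which for $s\approx n/2$ is of order $n^2$, not $-\Omega(n\log n)$. Your separate treatment of the bulk does beat this, but you never specify where ``bulk'' begins, so the intermediate range (say $2\le s\le \delta n$) is not covered. The fix is to use the sharper one-sided bound $E(s,n)\le n\log n + O(n)$ (take $\beta_s=\beta_{n-s}=1-\tfrac d4$, $\beta_n=-\tfrac d4$) together with the exact identity $\tfrac d4\bigl(f(s)-n^2\log n\bigr)-2\kappa s(n-s) = -s(n-s)\bigl(\tfrac d2\log n + 2\kappa\bigr) + \tfrac d4 n^2\bigl[\alpha^2\log\alpha+(1-\alpha)^2\log(1-\alpha)\bigr]$; both summands are nonpositive for large $n$ and the first is $\le -\tfrac d2 n\log n + O(n)$ since $s(n-s)\ge n-1$. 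This gives $G(s)\le (1-\tfrac d2)n\log n + O(n)$ uniformly. You also need to handle small $s$ (where the asymptotic for $\log\UL{K}{s}$ is unavailable) by bounding $\UL{K}{s}\le M$ for $s\le B$, as the paper does explicitly.

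The paper sidesteps all of this by packaging the $n^2\log n$, $n^2$, and $n\log n$ terms into a single quantity $Q_K(n)$ and using that $Q_K(k)+Q_K(n-k)$ is maximised at the boundary of the range $[B+1,n-B-1]$, so the whole convolution is bounded by a single endpoint evaluation.
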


\begin{proof}
    As before, we fix some totally real field $K$. 
    It's clear that $\UL{K}{n}$ is the Euler transform of $\IUL{K}{n}$.  We now show that (\ref{eq:Wrightcondition}) holds for the sequence $H_n = \UL{K}{n}$.

    For brevity and to simplify the calculations, we shall put
    \begin{equation*}
        P_K := \nquadcoeff
    \end{equation*}
    and let
    \begin{equation*}
        Q_K(n) := \frac{d}{4} n^2 \log{n} + P_K n^2 - \frac{d}{4} n \log{n} .
    \end{equation*}
    Note that $Q_K(n)$ is an increasing sequence for sufficiently large $n$.  In particular, we have
    \begin{equation*}
        Q_K(n) - Q_K(n-1) 
        = \frac{d}{2} n \log{n} + \bigO{n}.
    \end{equation*}
    
    Using the first three terms on our bounds on $\log \UL{K}{n}$ from Theorem~\ref{thm:ULasymp}, for any $\eps > 0$, we have that
    \begin{equation} \label{eq:classbounds}
        Q_K(n) - \eps n \log n \leq \log \UL{K}{n} \leq Q_K(n) + (1 + \eps) n \log{n}
    \end{equation}
    for all sufficiently large $n$.

    We choose $\eps = \frac{1}{4}$ and we let $B > 0$ be such that (\ref{eq:classbounds}) holds for all $n > B$. Let $M$ be the maximum of $\UL{K}{n}$ for $n = 1, \dots, B$.  Then we note that, for all $n > 2B$ we have
    \begin{align}
        \sum_{k=1}^{n-1} H_k H_{n-k} 
        &= \sum_{k=1}^{B} H_k H_{n-k} + \sum_{k=B+1}^{n-B-1} H_k H_{n-k} + \sum_{k=n-B}^{n-1} H_k H_{n-k} \nonumber \\
        &\leq 2 \sum_{k=1}^B M \exp{ \Big(  Q_K(n-k) + (1+\eps) (n-k) \log (n-k) \Big)} \nonumber \\
        &\quad + \sum_{k=B+1}^{n-B-1}  
        \exp{ \Big( Q_K(k) + (1+\eps) k \log k  +  Q_K(n-k) + (1 + \eps) (n-k) \log(n-k) 
          \Big)  } \nonumber \\
        &\leq 2BM \exp{ \Big( Q_K(n-1)  + (1+ \eps) (n-1) \log{n}  \Big)  } \nonumber \\
        &\quad + n \exp{ \Big( Q_K(n-1)  + (1+ \eps) (n-1) \log{n} + \bigO{1} \Big)  } \nonumber \\
        &\leq \exp{ \Big(  Q_K(n) + (1 + \eps - \frac{d}{2}) n \log{n} + \bigO{n}  \Big) } \nonumber \\
        &=  o \big( \exp{(Q_K(n) - \eps n \log n )} \big) = o(H_n), \label{eq:lastline}
    \end{align}
    where the last inequality holds as $1 + \eps - \frac{d}{2} \leq - \frac{1}{4}$, as $d > 2$.  Thus, we have that (\ref{eq:Wrightcondition}) holds for $H_n = \UL{K}{n}$, and so by Theorem~\ref{thm:wright}, this proves that $\UL{K}{n} \sim \IUL{K}{n}$ as $n \to \infty$.
    
\end{proof}

As a corollary, using Theorem~\ref{thm:ULasymp} this gives the asymptotic for $\log \IUL{K}{n}$:
\begin{equation*}
    \log \IUL{K}{n} = \frac{d}{4} n^2 \log{n} + n^2 \Big( \nquadcoeff \Big)  +  \bigO{n \log n} %
\end{equation*}
as $n \to \infty$.

\subsection{Case $d = 2$}

Note that line (\ref{eq:lastline}) in our proof of Theorem~\ref{thm:wrightasymp} above required that $d > 2$.  In the case of real quadratic fields, where $d = 2$, we must redo the computations, keeping track of further error terms for $\UL{K}{n}$.  This will allow us to apply Wright's criterion for all but finitely many real quadratic fields $K$.

\begin{theorem} \label{thm:wrightasymp2}
    Let $K$ be a real quadratic field with discriminant $\Delta_K$.  Let $\Autn$ be a constant such that $\log \card{ \Aut(L) } \leq n \log n + \Autn n + \bigO{\sqrt{n} \log n }$ for any rank $n$ unimodular $\co$-lattice.  Then if 
    \begin{equation} \label{eq:wright_deg2_condition}
        \frac{\log \Delta_K - 2 \log(2 \pi) - 2}{2}  > \Autn + \sum_{\fp | 2} \floor{\frac{e_{\fp}}{2}} \log \norm{\fp}
    \end{equation}
    holds, then $\UL{K}{n} \sim \IUL{K}{n}$ as $n \to \infty$.
\end{theorem}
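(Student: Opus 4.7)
My plan is to apply Wright's theorem (Theorem~\ref{thm:wright}) to $H_n = \UL{K}{n}$, verifying that $\sum_{k=1}^{n-1} H_k H_{n-k} = o(H_n)$ as $n \to \infty$; this yields $\UL{K}{n} \sim \IUL{K}{n}$. The first step is to sharpen the upper bound of Theorem~\ref{thm:ULasymp} by replacing its Collins--Friedland estimate for $\card{\Aut(L)}$ with the hypothesized Schur-style bound $\log \card{\Aut(L)} \leq n \log n + \Autn n + O(\sqrt{n} \log n)$. Combining this with the mass upper bound in (\ref{eq:logmassasymp2}) and the constant-in-$n$ bound on the number of unimodular genera from Proposition~\ref{upper bound num genera}, I obtain
\[
\log \UL{K}{n} \leq Q_K(n) + n \log n + (\nlinearcoeffa + \Autn)\, n + O(\sqrt{n} \log n),
\]
where $Q_K(n) := (d/4) n^2 \log n + P_K n^2 - (d/4) n \log n$ and $P_K := \nquadcoeff$. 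The matching lower bound $\log \UL{K}{n} \geq Q_K(n) + \nlinearcoeff \cdot n + O(\log n)$ comes directly from Theorem~\ref{thm:ULasymp}.

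The crucial step is to control the boundary term $H_1 H_{n-1}$. A Taylor expansion yields $Q_K(n) - Q_K(n-1) = (d/2)\, n \log n + ((d/4) + 2 P_K)\, n + O(\log n)$, which in the case $d = 2$ simplifies to $n \log n + \tfrac{\log \Delta_K - 2\log(2\pi) - 2}{2}\, n + O(\log n)$. Substituting the two bounds above and using $(n-1) \log(n-1) = n \log n - \log n + O(1)$, the $n \log n$ terms cancel exactly in $\log(H_1 H_{n-1}/H_n)$, and the coefficient of $n$ reduces to
\[
-\tfrac{\log \Delta_K - 2\log(2\pi) - 2}{2} + \Autn + \sum_{\fp \mid 2} \floor{e_\fp/2} \log \norm{\fp},
\]
which is strictly negative exactly under hypothesis (\ref{eq:wright_deg2_condition}). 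Hence $H_1 H_{n-1} = o(H_n)$, and the same reasoning applies to $H_k H_{n-k}$ for $k$ in any fixed bounded range (and its mirror image $k \in [n-B, n-1]$).

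For the middle range $2 \leq k \leq n-2$, the same Taylor expansion gives $Q_K(k) + Q_K(n-k) - Q_K(n) \leq -d\, n \log n + O(n)$ at $k=2$, strengthening to $-\Theta(n^2 \log n)$ as $k$ approaches $n/2$. Since $k \log k + (n-k) \log(n-k) \leq n \log n$, this gives $\log(H_k H_{n-k}/H_n) \leq -(d-1) n \log n + O(n)$ uniformly in $k$, so summing the $O(n)$ middle terms yields a total of size $o(H_n)$. Combining the endpoint and middle estimates verifies Wright's criterion. The principal obstacle is that, unlike the $d \geq 3$ case of Theorem~\ref{thm:wrightasymp}, the coefficient of $n \log n$ in $\log(H_1 H_{n-1}/H_n)$ equals $1 - d/2 = 0$ when $d = 2$; one cannot rely on the leading-order term to drive the boundary ratio to zero and must instead descend to the coefficient of $n$, where the hypothesis (\ref{eq:wright_deg2_condition}) on $\Delta_K$ is exactly what is needed.
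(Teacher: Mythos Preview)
Your proposal is correct and follows essentially the same approach as the paper's proof: sharpen the upper bound of Theorem~\ref{thm:ULasymp} using the hypothesized $\Autn$ estimate in place of the Collins--Friedland bound, then verify Wright's condition $\sum_k H_k H_{n-k}=o(H_n)$ by splitting into a bounded boundary range and a middle range, with the critical case being $k=1$ where the $n\log n$ terms cancel (since $1-d/2=0$) and condition~(\ref{eq:wright_deg2_condition}) is exactly what forces the coefficient of $n$ negative. The only cosmetic difference is that the paper redefines $Q_K(n)$ to absorb the extra $(1-d/4)n\log n + (R_K+\Autn)n$ terms from the upper bound, whereas you keep your $Q_K$ as in Theorem~\ref{thm:wrightasymp} and track those terms separately; the computations are otherwise identical.
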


\begin{proof}
    We proceed in a similar way to the proof of Theorem~\ref{thm:wrightasymp}, but must now furthermore keep track of our precise bounds for the $\bigO{n}$ error terms for $\UL{K}{n}$.  This requires using our best known bounds for the $\bigO{n}$ error term for $\log \card{ \Aut(\Lambda) }$.

    As before, for brevity and to simplify the calculations, we shall put
    \begin{equation*}
        P_K := \nquadcoeff \quad \text{and} \quad R_K := \nlinearcoeffa
    \end{equation*}
    and let
    \begin{equation*}
        Q_K(n) = \frac{d}{4} n^2 \log{n} + P_K n^2 + \Big(1 - \frac{d}{4} \Big) n \log{n} + (R_K + \Autn) n.
    \end{equation*}
    As before, we have that $Q_K(n)$ is an increasing sequence for sufficiently large $n$, and in particular we have
    \begin{equation*}
        Q_K(n) - Q_K(n-1) = \frac{d}{2} n \log{n} + \Big(2 P_K + \frac{d}{4}  \Big) n  + \Big(1 - \frac{d}{4} \Big) \log{n} + \bigO{1}.
    \end{equation*} 
    
    Note from (\ref{eq:ULupper}) that we have $\log \UL{K}{n} \leq Q_K(n) + \bigO{\log n}$ as $n \to \infty$.   Recall from Theorem~\ref{thm:ULasymp}  that there exists some constant $C > 0$, such that we have 
    \begin{equation} \label{eq:classboundsquad}
        Q_K(n) - n \log n - \Big( \Autn + \sum_{\fp | 2} \floor{\frac{e_{\fp}}{2}} \log \norm{\fp} \Big) - C \sqrt{n} \log{n} \leq \log \UL{K}{n} \leq Q_K(n) +  C \sqrt{n} \log n 
    \end{equation}
    for all sufficiently large $n$.

    Let $B > 0$ be such that (\ref{eq:classboundsquad}) holds for all $n > B$, and let $M$ be the maximum of $\UL{K}{n}$ for $n = 1, \dots, B$.  Then we note that, for all $n > 2B$ we have
    \begin{align*}
        \sum_{k=1}^{n-1} H_k H_{n-k} 
        &= \sum_{k=1}^{B} H_k H_{n-k} + \sum_{k=B+1}^{n-B-1} H_k H_{n-k} + \sum_{k=n-B}^{n-1} H_k H_{n-k} \nonumber \\
        &\leq 2 \sum_{k=1}^B M \exp{ \Big(  Q_K(n-k)  + C \sqrt{n-k} \log (n-k) \Big)} \\
        &\quad + \sum_{k=B+1}^{n-B-1}  
        \exp{ \Big( Q_K(k) + C \sqrt{k} \log k +  Q_K(n-k) +  C \sqrt{n-k} \log(n-k)
          \Big)  } \\
        &\leq 2BM \exp{ \Big( Q_K(n-1)  + C \sqrt{n} \log n  \Big)  } \\
        &\quad + n \exp{ \Big( Q_K(n-1) + C \sqrt{n} \log n + \bigO{1} \Big)  } \\
        &\leq \exp{ \Big(  Q_K(n) - n \log n - (2P_K + 1/2) n + \bigO{\log n}  \Big) } \\
        &=  o \big( \exp{(Q_K(n) -  n \log n - (\Autn + \sum_{\fp | 2} \floor{e_{\fp}/2} \log \norm{\fp}) n + \bigO{\log n} )} \big) = o(H_n),
    \end{align*}
    where the last inequality holds from our assumption of (\ref{eq:wright_deg2_condition}). Thus, as before we have that (\ref{eq:Wrightcondition}) holds for $H_n = \UL{K}{n}$, and so by Theorem~\ref{thm:wright}, this proves that $\UL{K}{n} \sim \IUL{K}{n}$ as $n \to \infty$.
\end{proof}

\begin{cor} \label{cor:d2}
    Let $K = \Q(\sqrt{D})$ be a real quadratic field such that $D$ is squarefree and $D \in \{919$, $921$, $922$, $923$, $926\}$ or $D \geq 930$.  Then $\IUL{K}{n} \sim \UL{K}{n}$ as $n \to \infty$.  In particular, Theorem~\ref{thm:wrightasymp2} holds for all but at most $\numquadexceptions$ exceptional real quadratic fields $K$.
\end{cor}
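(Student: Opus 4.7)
The plan is to apply Theorem~\ref{thm:wrightasymp2} directly, verifying inequality (\ref{eq:wright_deg2_condition}) for each specified field. For $K = \Q(\sqrt{D})$ with $D \geq 2$ squarefree, I would first make the right-hand side explicit. By Theorem~\ref{thm:schurautbound}, one may take $\Autn = \mathcal{A}$ in the generic case, $\Autn = \mathcal{A} + (\log 2)/2$ when $K = \Q(\sqrt{2})$, and $\Autn = \mathcal{A} + p\log(p)/(p-1)^2$ when $K = \Q(\sqrt{p})$ for a prime $p \equiv 1 \pmod 4$. The dyadic sum $\sum_{\fp \mid 2}\floor{e_\fp/2}\log\norm{\fp}$ equals $\log 2$ exactly when $2$ ramifies in $\co$ (that is, when $D \equiv 2, 3 \pmod 4$) and vanishes otherwise. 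Substituting $\Delta_K = D$ or $\Delta_K = 4D$ according to the residue of $D$ modulo $4$, the contributions of $\log 2$ on the two sides cancel, and in the generic case (\ref{eq:wright_deg2_condition}) reduces to
\begin{equation*}
\log D > 2\mathcal{A} + 2\log(2\pi) + 2,
\end{equation*}
equivalently $D > (2\pi)^2 \exp(2\mathcal{A} + 2) \approx 918.7$. Thus the generic case holds for every squarefree $D \geq 919$.

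For the exceptional case $D = p$ with $p \equiv 1 \pmod 4$ prime, the correction $p\log(p)/(p-1)^2$ is positive but decreases monotonically in $p$, whereas the left-hand side grows like $(\log p)/2$. A direct numerical computation shows that the inequality fails at $p = 929$ (by a small margin) but holds for every such prime with $p \geq 937$. Since $937$ is the next prime $\equiv 1 \pmod 4$ after $929$, I would conclude that (\ref{eq:wright_deg2_condition}) holds for every squarefree $D \geq 930$. In the boundary window $[919, 929]$, one then verifies that the five squarefree values $\{919, 921, 922, 923, 926\}$ all fall in the generic case and satisfy the inequality, that $929$ is the exceptional prime for which the inequality fails, and that $920, 924, 925, 927, 928$ are not squarefree.

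To count the exceptional real quadratic fields, I would use the squarefree-counting function $Q(N) = \sum_{d \geq 1}\mu(d)\floor{N/d^2}$. The exceptional fields correspond precisely to squarefree $D$ with $2 \leq D \leq 929$ and $D \notin \{919, 921, 922, 923, 926\}$. An explicit evaluation yields $Q(929) = 565$, so subtracting $1$ (for $D = 1$) and $5$ (for the good boundary values) gives exactly $\numquadexceptions$ exceptional fields. The main obstacle is the careful boundary analysis near the threshold: one must track the small shift caused by the correction term in the exceptional prime case, confirm that $929$ is the only prime $\equiv 1 \pmod 4$ lying between the generic threshold and $937$, and verify the squarefree classification of each integer in $[919, 929]$. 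Once this bookkeeping is complete, the corollary follows from a direct application of Theorem~\ref{thm:wrightasymp2}.
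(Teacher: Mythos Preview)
Your proof is correct and follows the same overall strategy as the paper: verify the inequality~(\ref{eq:wright_deg2_condition}) of Theorem~\ref{thm:wrightasymp2} using the Schur-type bounds on $\Autn$ from Theorem~\ref{thm:schurautbound}, and count the squarefree $D$ that fail. The execution differs slightly. The paper also keeps the alternative bound $\Autn = \log 6 - 1$ from Theorem~\ref{thm:autbound} (taking the smaller of the two), uses only the crude estimate $\sum_{\fp\mid 2}\lfloor e_\fp/2\rfloor\log\norm{\fp}\le\log 2$ to obtain a preliminary threshold $\Delta_K>576\pi^2$, and then relegates the remaining finite range to a SageMath check. Your observation that the dyadic contribution equals $\log 2$ precisely when $D\equiv 2,3\pmod 4$, and that this cancels exactly against the extra $\log 4$ in $\Delta_K=4D$, is a genuine sharpening: it collapses the generic condition uniformly to $\log D > 2\mathcal{A}+2\log(2\pi)+2$, so that only the single boundary prime $p=929$ needs individual treatment rather than every quadratic field of discriminant below $5685$. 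The two arguments reach the identical list of exceptions, but yours requires essentially no machine verification beyond the squarefree count.
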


\begin{proof}
    Note that, by Theorem~\ref{thm:autbound}, we can take $\Autn = \log{6} - 1$ for all quadratic fields $K$.   Furthermore, we clearly have $\sum_{\fp | 2} \floor{\frac{e_{\fp}}{2}} \log \norm{\fp} \leq \log {2}$.

    Thus, if $\Delta_K > 576 \pi^2 \approx 5684.89\dots$, then a standard calculation gives
    \begin{equation*}
        \frac{\log \Delta_K - 2 \log(2 \pi) - 2}{2} > \log 6 - 1 + \log 2,
    \end{equation*}
    and thus the condition (\ref{eq:wright_deg2_condition}) holds for all real quadratic fields of discriminant $\Delta_K > 576 \pi^2$.

    Therefore, it remains to check condition (\ref{eq:wright_deg2_condition}) for all the real quadratic fields $K$ of discriminant at most $5684$. Using Theorem~\ref{thm:autbound}, we can take $\Autn$ to be either $\log{6} - 1$, or, by using Theorem~\ref{thm:schurautbound}, we can take 
    \begin{equation*}
        \Autn = \begin{cases}
            \mathcal{A} + \log{2}/2  & \text{if } K = \Q(\sqrt{2}) \\
            \mathcal{A} + \frac{p \log p }{(p-1)^2} & \text{if } K = \Q(\sqrt{p}), p \equiv 1 \pmod{4} \\
            \mathcal{A} & \text{otherwise}, 
        \end{cases}
    \end{equation*}
    whichever is smaller.  By checking (\ref{eq:wright_deg2_condition}) with SageMath for each of these fields, we verify that condition (\ref{eq:wright_deg2_condition}) holds exactly for all squarefree $D$ such that $D \in \{919, 921, 922, 923, 926\}$ or $D \geq 930$.  

    Counting the number of squarefree $D < 930$, not including $919, 921, 922, 923, 926$, gives exactly $\numquadexceptions$ real quadratic fields for which (\ref{eq:wright_deg2_condition}) does not hold.
\end{proof}

\subsection{Case $K = \Q$}

We note that our proofs of Theorem~\ref{thm:wrightasymp} and Theorem~\ref{thm:wrightasymp2} do not work in the case $d = 1$.  However, in the particular case where $K = \Q$, we can instead use some known results of Bannai \cite{Bannai}.

\begin{theorem}[Bannai {\cite[Theorem~2]{Bannai}}] \label{thm:bannai}
    Let $L$ be a rank $n$ positive definite unimodular $\Z$-lattice. Let $\mass{\Q}{L}$ be the usual Siegel mass of $L$ as given in (\ref{eq:mass}), and let $\mathrm{mass}'_{\Q}(L)$ be the mass of all classes in $\text{gen}(L)$ with nontrivial automorphisms, i.e.:
    \begin{equation} \label{eq:nontrivmass}
        \mathrm{mass}'_{\Q}(L) := \sum_{\substack{  \Lambda \in \gen(L) \\ \card{\Aut(\Lambda)} > 2 }} \frac{1}{\card{\Aut(\Lambda)}}.
    \end{equation}
    Then
    \begin{equation*}
        \frac{\mathrm{mass}'_{\Q}(L)}{ \mass{\Q}{L}} \leq \begin{cases}
            30 (\sqrt{2 \pi })^n / \Gamma(\frac{n}{2}) & \text{if $L$ odd and $n \geq 43$}, \\
            2^{n+1} (\sqrt{2 \pi} )^n / \Gamma( \frac{n}{2}) & \text{if $L$ even and $n \geq 144$}.
        \end{cases}
    \end{equation*}
    In particular, the ratio $ \mathrm{mass}'_{\Q}(L)/\mass{\Q}{L}$ rapidly tends to $0$ as $n \to \infty$.
\end{theorem}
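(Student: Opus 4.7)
The strategy is to reduce to studying lattices admitting a prime-order automorphism other than $\pm I$, and then control those via Siegel's mass formula applied to lattices of strictly smaller rank. If $\Lambda \in \gen(L)$ has $\card{\Aut(\Lambda)} > 2$, then $\Aut(\Lambda)/\{\pm I\}$ is nontrivial, so by Cauchy's theorem it contains an element of some prime order $p$, which lifts to $\sigma \in \Aut(\Lambda)$ of order $p$ (or $2p$, in which case $\sigma^2$ has prime order) with $\sigma \neq \pm I$. Writing $\mathrm{mass}'_{\Q}(L) \leq \sum_p \mathrm{mass}_p(L)$, where $\mathrm{mass}_p(L)$ denotes the mass of lattices in $\gen(L)$ admitting such a prime-order-$p$ automorphism, it suffices to bound each $\mathrm{mass}_p(L)$ individually.

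For each prime-order $\sigma$, I would decompose $\Lambda \otimes \Q$ into $\sigma$-invariant subspaces: the $\pm 1$ eigenspaces when $p = 2$, or the fixed subspace together with its orthogonal complement (a $\Q(\zeta_p)$-module) when $p$ is odd. This yields an orthogonal sublattice $\Lambda_+ \perp \Lambda_- \subseteq \Lambda$ of finite index bounded by a power of $p$, with $\Lambda_+$ of some rank $r$ satisfying $1 \leq r \leq n-1$ (and $r \leq n - (p-1)$ when $p$ is odd). Both $\Lambda_{\pm}$ have discriminants that are powers of $p$, so only finitely many genera of each rank can occur. Applying Siegel's mass formula componentwise bounds $\mathrm{mass}_p(L)$ by a weighted sum of the form $\sum_r c_{r,p} \cdot \mass{\Q}{\Lambda_+} \cdot \mass{\Q}{\Lambda_-}$, where $c_{r,p}$ absorbs the gluing count and the finitely many genus possibilities at each rank.

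The final step is the asymptotic comparison $\sum_r \mass{\Q}{\Lambda_+} \mass{\Q}{\Lambda_-} / \mass{\Q}{L}$. Using $\log \mass{\Q}{I_n} = \tfrac{n^2}{4}\log n + O(n^2)$ from Theorem~\ref{thm:massasymp}, a direct check shows this ratio is maximised at the extremes $r \in \{1, n-1\}$ and is of order $(\sqrt{2\pi})^n / \Gamma(n/2)$, matching the stated decay. The main obstacle is tracking the precise gluing constants in the involution case $p = 2$, which differ between odd and even unimodular hosts: odd $L$ force tighter $2$-local constraints giving the constant $30$, while even $L$ admit richer $2$-local overlattices, inflating the constant to $2^{n+1}$. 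Handling the odd-prime contributions is comparatively easy since the fixed rank drops by at least $p-1 \geq 2$, and one then sums the geometric-type tail $\sum_{p \geq 3} \mass{\Q}{I_{n-(p-1)}}/\mass{\Q}{I_n}$, which is dominated by the $p = 2$ term.
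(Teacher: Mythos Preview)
The paper does not prove this statement at all: it is quoted verbatim from Bannai's paper and used as a black-box input to handle the case $K = \Q$ in Theorem~\ref{thm:Q}. So there is no ``paper's own proof'' to compare against.

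That said, your sketch is essentially Bannai's own argument, so it is the right approach. A few remarks on the details. First, the lifting step is slightly off: if $\Aut(\Lambda)/\{\pm I\}$ has an element of prime order $p$ and $p=2$, the lift may have order $4$ with square $-I$ (a cyclic $2$-Sylow of order $4$), so you do not always obtain an involution $\neq \pm I$; Bannai treats this separately. Second, the assertion that the split-mass ratio is maximised at $r \in \{1, n-1\}$ is correct but requires a short log-convexity check on $r \mapsto \mass{\Q}{I_r}\,\mass{\Q}{I_{n-r}}$, not just the leading asymptotic of Theorem~\ref{thm:massasymp}. Third, the explicit constants $30$ and $2^{n+1}$, and the thresholds $n \geq 43$ and $n \geq 144$, come from carefully enumerating the finitely many $p$-elementary genera of $\Lambda_\pm$ and bounding the $2$-adic gluing; your outline acknowledges this is where the work lies but does not carry it out. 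None of these are fatal for a sketch, and the overall architecture matches Bannai's proof.
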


\begin{theorem} \label{thm:Q}
    We have
    \begin{equation*}
        \log \IUL{\Q}{n} \geq \frac{1}{4} n^2 \log{n} - n \Big( \frac{2 \log ( 2 \pi ) + 3}{8} \Big)  + \bigO{n \log n }
    \end{equation*}
    as $n \to \infty$.
\end{theorem}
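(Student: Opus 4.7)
The plan is to combine Bannai's Theorem~\ref{thm:bannai} with the trivial lower bound $\cl_\Q(I_n) \leq \UL{\Q}{n}$ and the explicit asymptotic for $\massI{\Q}{n}$. The key observation is that if a positive definite $\Z$-lattice $\Lambda$ has $\card{\Aut(\Lambda)} = 2$ (that is, $\Aut(\Lambda) = \{\pm I\}$), then $\Lambda$ is necessarily indecomposable: if $\Lambda = \Lambda_1 \perp \Lambda_2$ with both components nonzero, then $\Aut(\Lambda)$ contains $\Aut(\Lambda_1) \times \Aut(\Lambda_2)$, which already has order at least $4$. Hence one can produce indecomposable rank $n$ unimodular $\Z$-lattices by producing lattices in $\gen(I_n)$ with trivial automorphism group.

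I would first apply Bannai's theorem to $L = I_n$, which is odd: for $n \geq 43$ we have
\[
    \mathrm{mass}'_{\Q}(I_n) \leq \frac{30(\sqrt{2\pi})^n}{\Gamma(n/2)} \cdot \massI{\Q}{n}.
\]
By Stirling's formula the prefactor decays superexponentially in $n$, so $\mathrm{mass}'_{\Q}(I_n) = o(\massI{\Q}{n})$ as $n \to \infty$. In particular, if $N_{\mathrm{triv}}(n)$ denotes the number of isometry classes $\Lambda \in \gen(I_n)$ with $\card{\Aut(\Lambda)} = 2$, then separating the contributions to $\massI{\Q}{n}$ from (\ref{eq:mass}) according to whether the automorphism group is trivial gives
\[
    \massI{\Q}{n} = \frac{N_{\mathrm{triv}}(n)}{2} + \mathrm{mass}'_{\Q}(I_n),
\]
so $N_{\mathrm{triv}}(n) = (2 - o(1))\massI{\Q}{n}$, and in particular $N_{\mathrm{triv}}(n) \geq \massI{\Q}{n}$ for all sufficiently large $n$.

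Since every such $\Lambda$ is indecomposable by the observation above and is unimodular of rank $n$, we obtain $\IUL{\Q}{n} \geq N_{\mathrm{triv}}(n) \geq \massI{\Q}{n}$ for $n$ large enough. Taking logarithms and substituting the asymptotic expansion (\ref{eq:logmassasymp}) with $d = 1$ and $\Delta_K = 1$ gives
\[
    \log \IUL{\Q}{n} \geq \log \massI{\Q}{n} + O(1) = \frac{1}{4} n^2 \log n - \frac{2\log(2\pi)+3}{8} n^2 + O(n \log n),
\]
which is the stated bound (the lower-order terms are absorbed into $O(n \log n)$).

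The only conceivable obstacle is verifying that Bannai's bound genuinely gives a negligible contribution relative to $\massI{\Q}{n}$. Since the Bannai ratio decays faster than any exponential in $n$, while $\massI{\Q}{n}$ grows on the order of $\exp\!\big(\frac{1}{4}n^2 \log n\big)$, the $o(1)$ conclusion is immediate and the rest of the argument is bookkeeping with the asymptotic formula already established for $\massI{\Q}{n}$.
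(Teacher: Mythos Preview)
Your proposal is correct and follows essentially the same approach as the paper: both use Bannai's theorem applied to $\gen(I_n)$ to show that the number of classes with trivial automorphism group is at least $(1-o(1))\cdot 2\,\massI{\Q}{n}$, note that such classes are indecomposable, and then invoke the asymptotic expansion (\ref{eq:logmassasymp}) with $d=1$, $\Delta_K=1$. Your writeup is in fact slightly cleaner in making explicit the identity $\massI{\Q}{n} = N_{\mathrm{triv}}(n)/2 + \mathrm{mass}'_{\Q}(I_n)$, and you correctly record the second term with an $n^2$ factor (the $n$ in the displayed statement appears to be a typo).
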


\begin{proof}
    We shall apply Theorem~\ref{thm:bannai} to each genus of unimodular lattices $L$.  In particular, this implies that, for any $\eps > 0$, we have
    \begin{equation*}
        \sum_{\substack{  \Lambda \in \gen(L) \\ \card{\Aut(\Lambda)} = 2 }} \frac{1}{\card{ \Aut(\Lambda) }} \geq (1 - \eps)  \mass{\Q}{L}
    \end{equation*}
    for all sufficiently large $n$. Let $\mathcal{G}_n$ be the finite set of genera of rank $n$ unimodular lattices. Given that any lattice with a trivial automorphism group must necessarily be indecomposable, we have that, for any $\eps > 0$,
    \begin{equation*}
        \frac{1}{2} \IUL{\Q}{n} \geq  \sum_{L \in \mathcal{G}_n} \sum_{\substack{  \Lambda \in \gen(L) \\ \card{\Aut(\Lambda)} = 2 }} \frac{1}{\card{ \Aut(\Lambda)}} \geq \sum_{\substack{  \Lambda \in \gen(I_n) \\ \card{\Aut(\Lambda)} = 2 }} \frac{1}{\card{ \Aut(\Lambda)}} \geq (1 - \eps) \massI{\Q}{n}.
    \end{equation*}
    for all sufficiently large $n$.
    Thus, we have $ \log \IUL{\Q}{n} \geq \log \massI{\Q}{n} + \bigO{1}$.  Using the asymptotic expansion of $\log \massI{\Q}{n}$ given in (\ref{eq:logmassasymp}), this proves the theorem.
\end{proof}

\noindent \textbf{Remark}. To avoid understating Bannai's work, we remark that our proof of Theorem~\ref{thm:Q} above only requires the result that almost all unimodular $\Z$-lattices of sufficiently large rank are indecomposable.  We emphasize that Bannai proves the far stronger result that almost all unimodular $\Z$-lattices have \emph{trivial automorphism group}.  It seems reasonable to conjecture that an analogue of Bannai's theorem holds for any totally real field $K$, i.e. that $\text{mass}_K'(L_n) / \mass{K}{L_n} \to 0$ as $n \to \infty$, for any sequence of rank $n$ unimodular $\co$-lattices $L_n$.

\section{Upper bounds for $\MRU{K}{n}$} \label{sec:upper}

\begin{theorem} \label{thm:MRUupper}
    Let $K$ be any totally real field.  Then
    \begin{equation*}
        \log \MRU{K}{n} \leq \frac{d}{4} n^2 \log{n} + n^2 \Big( \nquadcoeff \Big)  + \bigO{n \log n} 
    \end{equation*}
    as $n \to \infty$.
\end{theorem}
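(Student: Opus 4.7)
The plan is to combine the upper inequality of Lemma~\ref{bounds} with the upper bound on $\log \UL{K}{n}$ from Theorem~\ref{thm:ULasymp}.  Lemma~\ref{bounds} gives
\[
\MRU{K}{n} \;\le\; \sum_{L\in \overline{\mathcal{I}}(n+3)} t_L(n+3)\cdot (\rank L),
\]
and by the very definition of $t_L$ we have $t_L(n+3)\cdot \rank L \le n+3$ for every $L$.  Hence
\[
\MRU{K}{n} \;\le\; (n+3)\,\card{\overline{\mathcal{I}}(n+3)} \;=\; (n+3)\sum_{r=1}^{n+3} \IUL{K}{r}.
\]

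Next I would replace $\IUL{K}{r}$ by the a priori larger quantity $\UL{K}{r}$, and then use that $\UL{K}{r}$ is non-decreasing in $r$.  The monotonicity follows from the observation that the map $[L]\mapsto [L\perp\langle 1\rangle]$ sends rank $r$ isometry classes of unimodular $\co$-lattices injectively to rank $r+1$ isometry classes: indeed, any isometry $L_1 \perp \langle 1 \rangle \cong L_2 \perp \langle 1 \rangle$ yields an isometry $L_1 \cong L_2$ by the uniqueness of the orthogonal decomposition into indecomposables \cite[Theorem~105:1]{OMeara}.  Thus
\[
\MRU{K}{n} \;\le\; (n+3)\sum_{r=1}^{n+3} \UL{K}{r} \;\le\; (n+3)^2\,\UL{K}{n+3}.
\]

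Taking logarithms and inserting the upper bound of Theorem~\ref{thm:ULasymp} evaluated at $n+3$ in place of $n$, the remaining task is a short Taylor expansion.  Writing $P_K := \nquadcoeff$ for the quadratic coefficient, one checks that $(n+3)^2 \log(n+3) = n^2 \log n + O(n\log n)$ and $(n+3)^2 P_K = n^2 P_K + O(n)$, while the remaining subleading contributions in Theorem~\ref{thm:ULasymp} (the $(1 - d/4)(n+3)\log(n+3)$ term, the linear-in-$n$ and $\log$-in-$n$ terms, and the extra $2\log(n+3)$ coming from the prefactor $(n+3)^2$) are all easily absorbed into $O(n\log n)$.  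This yields the stated bound.

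There is no substantive obstacle once Lemma~\ref{bounds} and Theorem~\ref{thm:ULasymp} are in hand; the only conceptual point worth emphasising is the monotonicity of $\UL{K}{r}$, which ensures that the polynomial-in-$n$ slack in the crude estimate $\MRU{K}{n} \le (n+3)^2 \UL{K}{n+3}$ only costs $O(\log n)$ at the logarithmic scale, and is therefore dwarfed by the leading $\frac{d}{4}n^2 \log n$ term.  In conjunction with the matching lower bound obtained via the Wright transform argument of Section~\ref{sec:lower}, this coarse chain of inequalities will in fact be sharp to the order stated in Theorem~\ref{thm:mainasymp}.
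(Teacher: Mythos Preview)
Your proof is correct and follows essentially the same route as the paper: Lemma~\ref{bounds} $\Rightarrow$ $\MRU{K}{n}\le (n+3)^2\,\UL{K}{n+3}$ $\Rightarrow$ insert Theorem~\ref{thm:ULasymp} and expand. The only notable difference is that you supply an explicit argument (via $L\mapsto L\perp\langle 1\rangle$ and unique indecomposable splittings) for the monotonicity of $\UL{K}{r}$, which the paper merely asserts.
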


\begin{proof}
Recall from Lemma~\ref{bounds} that we have the upper bound:
\begin{equation*}
    \MRU{K}{n} \leq \sum_{k=1}^{n+3} \floor{\frac{n+3}{k}} k \IUL{K}{k}.
\end{equation*}
Now, as $\IUL{K}{n} \leq \UL{K}{n}$ and $\UL{K}{n}$ is a non-decreasing sequence, this gives us the bound $\MRU{K}{n} \leq (n+3)^2 \UL{K}{n+3}$.

By thus applying the upper bound for $\UL{K}{n+3}$ given in Lemma~\ref{thm:ULasymp}, we can derive the following explicit upper bounds for $\MRU{K}{n}$:
\begin{align*}
    \log \MRU{K}{n} &\leq \log \UL{K}{n} + 2 \log(n+3)  %
    \\
    &= \frac{d}{4} (n+3)^2 \log(n+3) + (n+3)^2 \Big( \nquadcoeff \Big)  +  \bigO{n \log n} \\
    &= \frac{d}{4} n^2 \log{n} + n^2 \Big( \nquadcoeff \Big)  + \bigO{n \log n} ,
\end{align*}
which proves the theorem.
\end{proof}

\subsection{Proof of Theorem~\ref{thm:mainasymp}} 
We first note that the upper bound claimed in (\ref{eq:main}) immediately follows from Theorem~\ref{thm:MRUupper}, for any totally real field $K$.

To prove the claimed lower bound for $\MRU{K}{n}$, we use the bounds in Lemma~\ref{bounds} to obtain that $\MRU{K}{n} \geq \IUL{K}{n} $.  Here, we consider three cases based on the degree $d$ on $K$.  If $d \geq 3$, then Theorem~\ref{thm:wrightasymp} implies that $\IUL{K}{n} \sim \UL{K}{n}$. Thus we have
\begin{equation*}
    \log \MRU{K}{n} \geq \log \IUL{K}{n} \geq \log \UL{K}{n} + \bigO{1},
\end{equation*}
and then the lower bound for $\UL{K}{n}$ given in Theorem~\ref{thm:ULasymp} implies the claimed lower bound for $\MRU{K}{n}$ in (\ref{eq:main}).  Similarly, for the case $d = 2$, if $K = \Q(\sqrt{D})$ where $D \geq 930$ or $D \in \{919, 921, 922, 923, 926\}$, then Corollary~\ref{cor:d2} implies that $\IUL{K}{n} \sim \UL{K}{n}$, and thus the same argument gives the claimed lower bound for $\MRU{K}{n}$.  Finally, in the case $K = \Q$, the bound in Theorem~\ref{thm:Q} gives the claimed lower bound for $\MRU{\Q}{n}$. \qed
\bigskip

To furthermore prove the claim in (\ref{eq:limsup}), we note that as $\UL{K}{n}$ grows super-exponentially, the power series $f(x) = \sum_{n=1}^\infty \UL{K}{n} X^n$ has a radius of convergence zero.  Thus, by a theorem of Bell \cite[Theorem~1, p.~1]{Bell}, this implies 
$$\limsup_{n \to \infty} \frac{\IUL{K}{n} }{ \UL{K}{n} } = 1,$$
which thus proves the claim in (\ref{eq:limsup}).  In particular, this implies that $\log \IUL{K}{n} = \log \UL{K}{n} + O(1)$ for infinitely many positive integers $n \geq 1$, and thus $\ref{thm:mainasymp}$ holds for infinitely many positive integers $n \geq 1$.

\section{Effective Density results} \label{sec:effective_density}

\begin{theorem} \label{thm:eulerbounds}
    Let $\{a_n\}$ be a sequence of nonnegative integers, and let $\{b_n\}$ be its Euler transform.  Then, for all $n \geq 1$, $\max \{ a_1, \dots, a_n \} \geq (b_n/(n!)^2)^{1/n}$.
\end{theorem}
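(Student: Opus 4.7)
The plan is to prove the equivalent inequality $b_n \leq M^n (n!)^2$, where $M := \max\{a_1, \ldots, a_n\}$. If $M = 0$, all of $a_1, \ldots, a_n$ vanish and so $b_n = 0$ (since $b_n$ only depends on $a_1, \ldots, a_n$); from now on assume $M \geq 1$.

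The first move is to use the combinatorial interpretation of the Euler transform. Since $(1 - X^k)^{-a_k}$ is the generating function for multisets drawn from $a_k$ ``colors,'' each contributing weight $k$, the coefficient $b_n$ counts the multisets of colored parts $(k, c)$ with $k \geq 1$ and $c \in \{1, \ldots, a_k\}$ whose weights $k$ sum to $n$. Every such multiset admits at least one linear ordering, so $b_n$ is bounded above by the number of \emph{ordered} tuples with the same data.

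Next I would evaluate the ordered count by a geometric series. Set $F(X) := \sum_{k \geq 1} a_k X^k$; then the number of ordered tuples of total weight $n$ equals the coefficient of $X^n$ in $(1 - F(X))^{-1} = \sum_{r \geq 1} F(X)^r$. Since only $a_1, \ldots, a_n$ contribute to the coefficient of $X^n$, each factor in the expansion of $F(X)^r$ is at most $M$, and there are $\binom{n-1}{r-1}$ compositions of $n$ into $r$ positive parts, giving the coefficient of $X^n$ in $F(X)^r$ at most $M^r \binom{n-1}{r-1}$. Summing over $r$ yields
\[
 b_n \;\leq\; \sum_{r=1}^n \binom{n-1}{r-1} M^r \;=\; M(1 + M)^{n-1}.
\]

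Finally, to match the form claimed in the theorem, I would apply the crude estimates $(1 + M)^{n-1} \leq (2M)^{n-1}$ (valid for $M \geq 1$) and $2^{n-1} \leq (n!)^2$ (immediate induction on $n$), obtaining $b_n \leq M^n \cdot 2^{n-1} \leq M^n (n!)^2$ and hence $M \geq (b_n / (n!)^2)^{1/n}$, as required. There is no serious obstacle here: the argument is two lines of generating-function bookkeeping plus one elementary inequality. In fact the stated bound is extremely lossy --- the above actually proves $b_n \leq M(1+M)^{n-1}$, and the factor $(n!)^2$ is only needed to absorb the $2^{n-1}$ arising from the ordered-versus-unordered overcount.
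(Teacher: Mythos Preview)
Your proof is correct and takes a genuinely different route from the paper's. The paper proceeds by induction using the standard recurrence
\[
 b_m = \frac{1}{m}\Bigl(c_m + \sum_{k=1}^{m-1} c_k b_{m-k}\Bigr), \qquad c_m = \sum_{d \mid m} d\, a_d,
\]
bounding $c_k \leq C\sigma_1(k) \leq Ck^2$ and then showing inductively that $b_k \leq C^k(k!)^2$ for $k \leq n$, where $C = \max\{a_1,\dots,a_n\}$. Your argument instead interprets $b_n$ combinatorially as a count of colored multisets, overcounts by passing to ordered tuples, and evaluates the resulting composition sum directly. What your approach buys is a much sharper intermediate estimate $b_n \leq M(1+M)^{n-1}$, exponential in $n$ rather than of order $(n!)^2$; you then deliberately throw this away to land on the form stated in the theorem. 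The paper's recursive method, by contrast, genuinely uses the slack in $(n!)^2$ at each inductive step and does not obviously yield anything tighter. For the applications in the paper (effective bounds on the degree and discriminant in Theorems~\ref{thm:finite} and~\ref{thm:finite2}), the weaker $(n!)^2$ bound already suffices, so the extra precision is not exploited --- but it is worth noting that your bound would improve the explicit constants in \eqref{eq:maindbound} and \eqref{eq:maindiscbound}.
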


\begin{proof}
    We recall the following well-known recursive formula between any given sequence $\{a_n\}$ and its Euler transform $\{b_n\}$ (e.g. see \cite[p.~20]{SloanePlouffe}):
    \begin{equation*}
        b_n = \frac{1}{n} \left( c_n + \sum_{k=1}^{n-1} c_k b_{n-k}  \right), \quad \text{ where } \quad   c_n = \sum_{d | n} d a_d.
    \end{equation*}
    Fix some $n \geq 1$, and let $C \geq 1$ be a constant such that $a_k \leq C$ for all $k \leq n$.  We shall prove by induction that $b_k \leq C^k (k!)^{2}$ for all $k \leq n$.

    Firstly, we have that $b_1 = a_1 \leq C$, thus case $k = 1$ clearly holds. Now assume $b_k \leq C^k (k!)^2$ for all $k \leq m$ for some $m$. 
    Note that 
    \begin{equation*}
        c_k \leq \sum_{d | k} d C = C \sigma_1(k) \leq C k^{2}.
    \end{equation*}
    for all $k \leq m$. Thus, we have the upper bound
    \begin{align*}
        b_m &\leq \frac{1}{m} \Big(  C m^{2} + \sum_{k=1}^{m-1} { C k^{2} C^{n-k} ((n-k)!)^2 }  \Big) \\
        &\leq C m + \sum_{k=1}^{m-1} C^m m ((m-1)!)^2 \leq C^m (m!)^2  ,
    \end{align*}
    which proves the induction step.  Letting $C = \max( a_1, \dots, a_n)$ thus proves the theorem.  
\end{proof}

We now aim to obtain lower bounds for the values $\IUL{K}{n}$.  For now, we can now apply the bounds obtained in Theorem~\ref{thm:eulerbounds} to derive lower bounds for $\max \{ \IUL{K}{1}, \dots, \IUL{K}{n} \}$.

\begin{prop} \label{prop:maxIULbounds}
    Let $K$ be a totally real field of degree $d$ and discriminant $\Delta_K$.  If $n \geq 3$, then
    \begin{equation*}
        \max \{ \IUL{K}{1}, \dots, \IUL{K}{n}  \} \geq  \frac{\Delta_K^{(n-1)/4}}{(n!)^{2/n} } \Big( \frac{ 2 \Gamma(\frac{1}{2}) \cdots \Gamma(\frac{n}{2}) \pi^{-n(n+1)/4}}{2^{(n+5)/2} \cdot (2 \zeta(n/2) )^{\delta_{2 \mid n}}} \Big)^{d/n}.
    \end{equation*}
    For the case $n = 2$, then
    \begin{equation*}
        \max \{ \IUL{K}{1}, \IUL{K}{2} \}
        \geq \left( \frac{\Delta_K^{1/2}}{2 \pi^d \cdot 2^{5d/2}}
        \cdot \Big( \frac{(d-1)}{e \log  \Delta_K} \Big)^{d-1} \cdot \frac{0.000181}{d \cdot (2d)! \cdot \Delta_K^{1/d}}  \right)^{1/2}.
    \end{equation*}
\end{prop}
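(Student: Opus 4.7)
The proposal is to combine the elementary Euler-transform inequality of Theorem~\ref{thm:eulerbounds} with the explicit lower bounds on $\massI{K}{n}$ already established in Section~\ref{sec:mass}. Recall from Section~\ref{sec:indecomposables} that the generating-function identity
\[
\prod_{n=1}^{\infty} (1 - X^n)^{-\IUL{K}{n}} = 1 + \sum_{n=1}^\infty \UL{K}{n} X^n
\]
expresses $\{\UL{K}{n}\}$ as the Euler transform of $\{\IUL{K}{n}\}$. Applying Theorem~\ref{thm:eulerbounds} with $a_n = \IUL{K}{n}$ and $b_n = \UL{K}{n}$ immediately gives
\[
\max\{\IUL{K}{1}, \dots, \IUL{K}{n}\} \;\geq\; \bigl(\UL{K}{n} / (n!)^2\bigr)^{1/n}.
\]

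Next, I would bound $\UL{K}{n}$ from below by the class number $\cl_K(I_n)$, which by the trivial inequality $\cl_K(L) \geq 2\mass{K}{L}$ (since $\card{\Aut(\Lambda)} \geq 2$ for any positive definite lattice $\Lambda$) satisfies $\UL{K}{n} \geq \cl_K(I_n) \geq 2\massI{K}{n}$.

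For $n \geq 3$, I would directly invoke the lower bound in Theorem~\ref{thm:massInbounds}, namely
\[
\massI{K}{n} \;\geq\; \Delta_K^{n(n-1)/4} \Big(\tfrac{F_n}{2^{(n+5)/2} \cdot (2\zeta(n/2))^{\delta_{2\mid n}}}\Big)^d,
\]
with $F_n = 2\Gamma(\tfrac{1}{2})\cdots\Gamma(\tfrac{n}{2})\pi^{-n(n+1)/4}$. Substituting this into $(\UL{K}{n}/(n!)^2)^{1/n}$ and simplifying (the factor of $2$ coming from $\UL{K}{n} \geq 2\massI{K}{n}$ contributes a harmless $2^{1/n} \geq 1$ that can be dropped) yields exactly the claimed bound. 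For $n=2$, I would instead use the explicit intermediate bound $(\ref{eq:masslowerI2})$ from the proof of the $n=2$ finiteness theorem, which gives
\[
\massI{K}{2} \;\geq\; \frac{\Delta_K^{1/2}}{\pi^d \cdot 2^{5d/2}} \cdot \Big(\tfrac{d-1}{e\log\Delta_K}\Big)^{d-1} \cdot \frac{0.000181}{d \cdot (2d)! \cdot \Delta_K^{1/d}},
\]
after simplifying $2\Gamma(\tfrac{1}{2})\Gamma(1)/\pi^{3/2} = 2/\pi$. Then Theorem~\ref{thm:eulerbounds} with $n=2$ gives $\max\{\IUL{K}{1},\IUL{K}{2}\} \geq (\UL{K}{2}/4)^{1/2} \geq (\massI{K}{2}/2)^{1/2}$, which matches the stated inequality after substituting the lower bound for $\massI{K}{2}$.

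There is no substantive obstacle: the argument is essentially a bookkeeping assembly of Theorem~\ref{thm:eulerbounds}, the trivial mass-to-class-number inequality, and the mass formula bounds from Section~\ref{sec:mass}. The only point that requires a moment of care is tracking the constants through the $n$th root (in particular ensuring the $2^{1/n}$ factor is absorbed correctly, and recognising that the $n=2$ case must route through inequality $(\ref{eq:masslowerI2})$ rather than Theorem~\ref{thm:massInbounds}, since the latter is stated only for $n \geq 3$).
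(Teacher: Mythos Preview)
Your proposal is correct and follows essentially the same approach as the paper's own proof: apply Theorem~\ref{thm:eulerbounds} to the Euler-transform pair $(\IUL{K}{\cdot}, \UL{K}{\cdot})$, use $\UL{K}{n} \geq \cl_K(I_n) \geq 2\massI{K}{n}$, and then plug in the mass lower bound from Theorem~\ref{thm:massInbounds} for $n \geq 3$ and from inequality~(\ref{eq:masslowerI2}) for $n = 2$. Your handling of the constants (dropping $2^{1/n} \geq 1$, and recognising that the $n=2$ case must route through~(\ref{eq:masslowerI2}) rather than Theorem~\ref{thm:massInbounds}) is exactly right.
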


\begin{proof}
    Note that we have $\UL{K}{n} \geq \cl_K(I_n) \geq 2 \massI{K}{n}$ for all $n \geq 1$. Thus by Theorem~\ref{thm:eulerbounds} and using the lower bounds for $\massI{K}{n}$ given in Theorem~\ref{thm:massInbounds}, we have that
    \begin{align*}
        \max \{ \IUL{K}{1}, \dots, \IUL{K}{n}  \} &\geq (\UL{K}{n}/(n!)^2)^{1/n} \\
        &\geq (2 \cdot \massI{K}{n}/(n!)^2)^{1/n} \\
        &\geq \left(  \Big(  \frac{F_n \cdot 60.1^{n(n-1)/4}}{8 \zeta(n/2)} \Big)^d \cdot \frac{2 \exp(-254 n(n-1)/4)}{(n!)^2}  \right)^{1/n}.
    \end{align*}

    Note that, thus in the case of $n = 2$, by using the lower bound for $\massI{K}{2}$ given in (\ref{eq:masslowerI2}), we have
    \begin{align*}
        \max ( \IUL{K}{1}, \IUL{K}{2} ) &\geq (\UL{K}{2}/4 )^{1/2} 
        \geq \Big( \frac{\massI{K}{2}}{2} \Big)^{1/2} \\
        &\geq 
        \left( \frac{\Delta_K^{1/2}}{2 \pi^d \cdot 2^{5d/2}}
        \cdot \Big( \frac{(d-1)}{e \log  \Delta_K} \Big)^{d-1} \cdot \frac{0.000181}{d \cdot (2d)! \cdot \Delta_K^{1/d}}  \right)^{1/2}.\qedhere
    \end{align*}

\end{proof}

\subsection{Proof of Theorem~\ref{thm:finite}}

Let $n \geq 3$ and let $C > 0$ be a fixed constant such that $\MRU{K}{n} < C$.  Using the lower bound for $\MRU{K}{n}$ from Lemma~\ref{bounds} and the bounds from Theorem~\ref{thm:eulerbounds}, we have the following lower bound for $\MRU{K}{n}$:
\begin{align*}
    \MRU{K}{n} &\geq \sum_{k=1}^n \floor{ \frac{n}{k}} k \IUL{K}{k} \geq  \max \{ \IUL{K}{1}, \dots, \IUL{K}{n}  \} \\
    &\geq (2 \cdot \massI{K}{n}/(n!)^2)^{1/n}.
\end{align*}
Thus, we have $\massI{K}{n} \leq C^n (n!)^2 / 2$.  By thus using the bounds for the degree $d$ and discriminant $\Delta_K$ given in (\ref{eq:dboundmass}) and (\ref{eq:discboundmass}), this gives the bounds
\begin{align*}
    d &\leq \frac{\log (C^n (n!)^2 \exp ( 127 n(n-1)/2 ) / 2 ) }{  \log ( (F_n \cdot 60.1^{n(n-1)/4} ) / ( 2^{(n+5)/2} \cdot (2 \zeta(n/2) )^{\delta_{2 \mid n}} ) ) } \\
    &= \frac{4 n \log(C) + 8 \log(n!) + 254n(n-1) - 4\log(2)}{
    \log ( ((2 \cdot 60.1^{n(n-1)/4 }  \prod_{i=1}^n \Gamma(i/2) )  ) / ( 2^{(n+5)/2} \cdot \pi^{n(n+1)/4} \cdot (2 \zeta(n/2) )^{\delta_{2 \mid n}} ) )}
\end{align*}
and 
\begin{align*}
    \Delta_K &\leq \Big( \frac{C^n (n!)^2 / 2}{ (F_n / ( 2^{(n+5)/2} \cdot (2 \zeta(n/2) )^{\delta_{2 \mid n}} ) )^d } \Big)^{4 / (n(n-1))} \\
    &= \Big( \frac{C^n (n!)^2 \pi^{dn(n+1)/4} 2^{d(n+5)/2} (2 \zeta(n/2))^{d\delta_{2|n}} }{2 \cdot (2 \prod_{i=1}^n \Gamma(i/2) )^d  }  \Big)^{4 / (n(n-1))},
\end{align*}
which proves the theorem. \qed

\subsection{Proof of Theorem~\ref{thm:finite2}}

In the case where $n = 2$, then Theorem~\ref{thm:eulerbounds} implies the bound
\begin{equation*}
    \massI{K}{2} \leq \cl_K(I_2) / 2 \leq \UL{K}{2}/2 \leq 2 (\max \{ \IUL{K}{1}, \IUL{K}{2} \})^2 \leq 2C^2.
\end{equation*}
Thus, as the class number $\cl_K(I_2)$ is bounded above by $4C^2$, then Theorem~\ref{thm:pfeuffer} by Pfeuffer proves the claimed finiteness result.

If $d \geq 3$, then we can obtain an effective bound on $\Delta_K$ from (\ref{eq:discbound2}).  This gives us
\[
    \pushQED{\qed}
    \Delta_K \leq \max \left( \exp \big( (12(d-1))^2 \big), \; \Big (\frac{ 2 C^2 \cdot (\pi \cdot 2^{5/2})^d \cdot d \cdot (2d)!   }{ 0.000181 \cdot ((d-1)/e)^{d-1} } \Big)^{\frac{12d}{5d-12}} \right).\qedhere
    \popQED
\]

\section{Bounds on $\MRU{K}{n+4} - \MRU{K}{n}$} \label{sec:shortgaps}

Here, we prove our theorem on bounds on the gaps $\MRU{K}{n+4} - \MRU{K}{n}$:

\subsection{Proof of Theorem~\ref{thm:shortgap}}
Let $K$ be a totally real field.  Using the bounds from (\ref{bounds}), we have
\begin{align*}
    \MRU{K}{n+4} &- \MRU{K}{n} \geq \sum_{k=1}^{n+4} \floor{ \frac{n+4}{k} } k \IUL{K}{k} - \sum_{k=1}^{n+3} \floor{ \frac{n+3}{k} } k \IUL{K}{k} \\
    & \geq \IUL{K}{1} + 2 \Big( \floor{ \frac{n+4}{2} } - \floor{ \frac{n+3}{2} } \Big) \IUL{K}{2} + 3 \Big( \floor{ \frac{n+4}{3} } - \floor{ \frac{n+3}{3} } \Big) \IUL{K}{3}
\end{align*}
If $n$ is even, then $\floor{(n+4)/2} > \floor{(n+3)/2}$, and thus we have the lower bound
\begin{align*}
    \MRU{K}{n+4} - \MRU{K}{n} &\geq \IUL{K}{1} + 2 \IUL{K}{2} \\
    &\geq \max( \IUL{K}{1}, \IUL{K}{2}) \\
    &\geq (\massI{K}{2}/2)^{1/2}.
\end{align*}
Thus, by the same bounds above, this proves have the theorem.

If furthermore $n \equiv 2$ mod 6, then $\floor{(n+4)/2} > \floor{(n+3)/2}$ and $\floor{(n+4)/3} > \floor{(n+3)/3}$, and so we have that
\begin{align*}
    \MRU{K}{n+4} - \MRU{K}{n} &\geq \IUL{K}{1} + 2 \IUL{K}{2} + 3 \IUL{K}{3} \\
    &\geq \max( \IUL{K}{1}, \IUL{K}{2}, \IUL{K}{3}) \\
    &\geq (\massI{K}{3}/18)^{1/3},
\end{align*}
which proves our claim.
\qed

\section{Cardinality and Nonuniqueness of criterion sets} \label{sec:criterion}

Recall that an $n$-universal criterion set is a set $\mathcal{S}$ of rank $n$ $\co$-lattices with the property that any lattice $L$ is $n$-universal if and only if $L$ represents all lattices in $\mathcal{S}$.
We first observe in Lemma~\ref{lem:criterion common} that lattices of the certain form must be contained in all $n$-universal criterion sets.
In Proposition~\ref{pro:criterion replacement}, we show that one may replace lattices in an $n$-universal criterion set by another lattice under certain conditions.
Using this result, we provide the proof of Theorems~\ref{thm:criterionset} and \ref{thm:criterion nonunique}.
Recall that for an $\co$-lattice $L$, we write $nL := L\mathbin{\perp}\dotsb\mathbin{\perp}L$ ($n$ times).

\begin{lemma} \label{lem:criterion common}
Let $\mathcal{S}_K(n)$ be an arbitrary $n$-universal criterion set.
\begin{enumerate}[\textup{(\alph{enumi})}]
 \item Let $J$ be a rank $n$ additively indecomposable $\co$-lattice with a squarefree volume $\mathfrak{v}J$. Then there exists an element in $\mathcal{S}_K(n)$ that is isometric to $J$.
 \item Let $\overline{\mathcal{I}}(n)=\{L_1, \dots, L_s\}$ be a set of complete representatives of indecomposable unimodular $\co$-lattices of rank at most $n$ modulo isometry. For each $i$ with $1\le i\le s$, there is $L \in \mathcal{S}_K(n)$ that represents $\floor{\frac{n}{\rank L_i}}L_i$.
 \item In \textup{(b)}, let $i$ and $j$ satisfy $1\le i < j\le s$. If $\widetilde{L_i} \in \mathcal{S}_K(n)$ represents $\floor{\frac{n}{\rank L_i}}L_i$ and $\widetilde{L_j} \in \mathcal{S}_K(n)$ represents $\floor{\frac{n}{\rank L_j}}L_j$, then $\widetilde{L_i} \ne \widetilde{L_j}$.
\end{enumerate}
\end{lemma}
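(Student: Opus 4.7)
My plan for Lemma~\ref{lem:criterion common} is to prove all three parts by the common mechanism of forming a suitable orthogonal sum built from members of $\mathcal{S}_K(n)$, invoking the criterion-set property to produce $n$-universality (or derive a contradiction with it), and then extracting the conclusion from additive indecomposability in (a), and from the uniqueness of the indecomposable decomposition (\cite[Theorem~105:1]{OMeara}) in (b) and (c).

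For (a), I would argue by contradiction. Assume no $S \in \mathcal{S}_K(n)$ is isometric to $J$ and consider
\[
    M := \bigperp_{S \in \mathcal{S}_K(n)} S.
\]
The lattice $M$ trivially represents every $S \in \mathcal{S}_K(n)$, hence is $n$-universal, hence represents $J$. An easy induction on the number of summands, applying the definition of additive indecomposability at each step to split $M$ into two pieces, yields $J \hookrightarrow S^*$ for some $S^* \in \mathcal{S}_K(n)$. Since $\rank J = \rank S^* = n$, the image $\sigma(J)$ is a full-rank sublattice of $S^*$, and the module-index formula gives $\mathfrak{v} J = [S^* : \sigma(J)]^2 \cdot \mathfrak{v} S^*$. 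Comparing prime valuations and using that $\mathfrak{v} J$ is squarefree integral then forces $[S^* : \sigma(J)] = \co$, so $S^* \cong J$, a contradiction.

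For (b), I would again argue by contradiction. Because $L_i$ is unimodular and indecomposable, every $\co$-lattice $L$ carries a well-defined multiplicity $c_i(L) := \#\{\alpha : L_\alpha \cong L_i\}$ in its unique indecomposable decomposition $L = \bigperp_\alpha L_\alpha$, and the splitting property of unimodular sublattices implies that $L$ represents $\lfloor n/\rank L_i\rfloor L_i$ if and only if $c_i(L) \ge \lfloor n/\rank L_i\rfloor$. Supposing no $L \in \mathcal{S}_K(n)$ represents $\lfloor n/\rank L_i\rfloor L_i$, write each $L \in \mathcal{S}_K(n)$ uniquely as $L = c_i(L) L_i \perp L^{(i)}$ with $c_i(L^{(i)}) = 0$, set $c_{\max} := \max_L c_i(L) < \lfloor n/\rank L_i\rfloor$, and form
\[
    M := c_{\max} L_i \perp \bigperp_{L \in \mathcal{S}_K(n)} L^{(i)}.
\]
Every $L \in \mathcal{S}_K(n)$ embeds into $M$ (since $c_i(L) L_i \hookrightarrow c_{\max} L_i$ and $L^{(i)}$ is a summand of $M$), so $M$ is $n$-universal. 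By the uniqueness of the indecomposable decomposition, however, $c_i(M) = c_{\max}$, so $M$ fails to represent $\lfloor n/\rank L_i\rfloor L_i$, contradicting the fact, used in the proof of Lemma~\ref{bounds}, that every $n$-universal lattice represents $\lfloor n/\rank L_i\rfloor L_i$.

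For (c), I would argue by a direct rank count: if $\widetilde L := \widetilde{L_i} = \widetilde{L_j} \in \mathcal{S}_K(n)$, then $\rank \widetilde L = n$, yet by uniqueness of the indecomposable decomposition together with $L_i \not\cong L_j$, the lattice $\widetilde L$ must orthogonally contain $\lfloor n/\rank L_i\rfloor$ copies of $L_i$ and $\lfloor n/\rank L_j\rfloor$ copies of $L_j$. A short case analysis splitting on whether each of $\rank L_i$ and $\rank L_j$ exceeds $n/2$ or not, and applying the elementary bound $\lfloor n/r \rfloor r \ge n - r + 1$ for $1 \le r \le n$, shows $\lfloor n/\rank L_i\rfloor \rank L_i + \lfloor n/\rank L_j\rfloor \rank L_j > n$ in every case, contradicting $\rank \widetilde L = n$. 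The main technical obstacle lies in the construction in (b): one has to collapse the $L_i$-multiplicity spread across different members of $\mathcal{S}_K(n)$ into the single block $c_{\max} L_i$ without losing any representations, and this hinges crucially on unimodular sublattices being orthogonal direct summands (\cite[Corollary~82:15a]{OMeara}), which makes $c_i$ additive under orthogonal sums. In (a) the delicate point is the final module-index argument that upgrades $J \hookrightarrow S^*$ to an isometry using the squarefreeness of $\mathfrak{v} J$.
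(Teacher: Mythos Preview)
Your proposal is correct and follows essentially the same approach as the paper's proof in all three parts: the orthogonal sum of criterion-set members in (a), the ``collapse the $L_i$-multiplicity'' construction in (b), and the rank count in (c). The only noteworthy difference is in (c): the paper bypasses your case analysis with the single observation that $\lfloor n/r\rfloor r > n/2$ for all $1 \le r \le n$ (indeed, either $r > n/2$ and $\lfloor n/r\rfloor = 1$, or $r \le n/2$ and $\lfloor n/r\rfloor r \ge n - r + 1 > n/2$), which immediately gives $\lfloor n/\rank L_i\rfloor \rank L_i + \lfloor n/\rank L_j\rfloor \rank L_j > n$.
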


\begin{proof}
(a) Write $\mathcal{S}_K(n) = \{ J_1, \dots, J_s \}$. Since $J_1 \mathbin{\perp} \dotsb \mathbin{\perp} J_s$ must be $n$-universal, it represents $J$. Since $J$ is additively indecomposable, there exists an index $i$ with $1\le i\le s$ such that there exists a representation $\sigma: J \to J_i$. Since $\sigma(J)$ has a squarefree volume, the inclusion $\sigma(J) \subseteq J_i$ cannot be proper. This proves (a).

(b) If an $\co$-lattice $L$ is indecomposable unimodular, then an $\co$-lattice $\Lambda$ represents $L$ if and only if the unique orthogonal decomposition of $\Lambda$ into indecomposables contains a component isometric to $L$. Hence, if $L$ is indecomposable unimodular, then $\Lambda$ represents $nL$ if and only if the indecomposable decomposition of $\Lambda$ contains $n$ copies of $L$. Fix an index $i$ with $1\le i\le s$. Suppose on the contrary that no $L \in \mathcal{S}_K(n)$ represents $\floor{\frac{n}{\rank L_i}}L_i$. Using indecomposable decompositions, we may write each $L \in \mathcal{S}_K(n)$ as $L = L' \mathbin{\perp} L''$ in a unique way, where $L' \cong t_L L_i$ for some nonnegative integer $t_L$ and $L_i \nrightarrow L''$. Note that our supposition implies $t_L < \floor{\frac{n}{\rank L_i}}$ for all $L \in \mathcal{S}_K(n)$. Define an $\co$-lattice
\[
 \Lambda = \max\{t_L \mid L\in\mathcal{S}_K(n)\}L_i \mathbin{\perp} \bigperp_{L \in \mathcal{S}_K(n)} L''\text.
\]
Clearly, $\Lambda$ represents all lattices in $\mathcal{S}_K(n)$, so it is $n$-universal. However, it does not represent $\floor{\frac{n}{\rank L_i}}L_i$, which is absurd. This proves (b).

(c) Suppose on the contrary that $L \in \mathcal{S}_K(n)$ represents both $\floor{\frac{n}{\rank L_i}}L_i$ and $\floor{\frac{n}{\rank L_j}}L_j$. Since $L_i$ and $L_j$ are indecomposable unimodular and distinct, $L$ represents $L' := \floor{\frac{n}{\rank L_i}}L_i \mathbin{\perp} \floor{\frac{n}{\rank L_j}}L_j$. Since $\rank L_i \le n$, we have $\floor{\frac{n}{\rank L_i}} \rank L_i > \frac{n}2$. Hence, $\rank L' > n$, which is absurd. This proves (c).
\end{proof}

\begin{prop} \label{pro:criterion replacement}
 Let $n$, $j$, $q$, $r$ be positive integers that satisfy $n = qj + r$ and $r \le j-4$.
 Suppose that there exists an indecomposable unimodular $\co$-lattice $J$ of rank $j$.
 Then, for any $n$-universal criterion set $\CS Kn$ and for any $\co$-lattice $R$ of rank $r$, the set
 \[
  (\CS Kn \setminus \{ L \in \CS Kn \mid L \text{ represents } qJ \}) \cup \{ qJ \mathbin{\perp} R \}
 \]
 is also an $n$-universal criterion set.
\end{prop}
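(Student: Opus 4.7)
The forward direction is immediate: if $M$ is $n$-universal then $M$ represents every rank-$n$ $\co$-lattice, hence in particular $qJ \mathbin{\perp} R$ (which has rank $qj+r=n$) and every element of $\CS{K}{n} \setminus \mathcal{T} \subseteq \CS{K}{n}$, where we set $\mathcal{T} := \{L \in \CS{K}{n} \mid L \text{ represents } qJ\}$.

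For the converse, suppose $M$ represents every element of $\mathcal{S}' := (\CS{K}{n} \setminus \mathcal{T}) \cup \{qJ \mathbin{\perp} R\}$; since $\CS{K}{n}$ is a criterion set, it suffices to verify that $M$ represents every $L \in \mathcal{T}$. Fix such an $L$. Because $L$ represents $qJ$ and $qJ$ is unimodular, the splitting property for unimodular sublattices \cite[Corollary~82:15a]{OMeara} combined with the uniqueness of the indecomposable decomposition \cite[Theorem~105:1]{OMeara} yields $L = qJ \mathbin{\perp} L''$, with $\rank L'' = r$ and $J$ not a component of $L''$. The same reasoning applied to the representation $qJ \mathbin{\perp} R \to M$ gives $M = qJ \mathbin{\perp} N$ with $R \to N$, and the task reduces to establishing a representation $L'' \to N$.

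The plan is to write $M = sJ \mathbin{\perp} M_0$ via its unique indecomposable decomposition, so that $s \geq q$ and $J$ does not occur as a component of $M_0$, whence $N \cong (s-q)J \mathbin{\perp} M_0$. The rank bound $r \leq j-4$ should then be exploited via Lemma~\ref{uni I n+3}: every rank-$r$ $\co$-lattice is represented by some unimodular lattice on $I_{r+3}\otimes K$, and the inequality $r+3 \leq j-1$ keeps the rank of this ``test lattice'' strictly below $\rank J$. Combined with the local description of representations by unimodular lattices in Lemma~\ref{min rank rep uni local} and Proposition~\ref{dyadic In+3}, this should permit fitting an arbitrary rank-$r$ lattice $L''$ into $N$, making use either of the extra $J$-copies available when $s > q$ or of the non-$J$ indecomposable structure of $M_0$ coming from the hypothesis that $M$ represents $\CS{K}{n} \setminus \mathcal{T}$.

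The main obstacle will be to upgrade the pointwise statement ``$N$ represents $R$'' to the uniform statement ``$N$ represents every rank-$r$ lattice'', which is what the full range of $L \in \mathcal{T}$ (varying over all $n$-universal criterion sets) effectively demands. The hypothesis that $M$ represents each $L' \in \CS{K}{n} \setminus \mathcal{T}$ is crucial here, since such $L'$ fail to represent $qJ$ and therefore carry strictly fewer than $q$ copies of $J$ in their indecomposable decompositions; the resulting non-$J$ content must then propagate into $M_0$ at each finite place. Carefully combining these local constraints with the ``room'' secured by $r \leq j-4$ will constitute the technical heart of the proof.
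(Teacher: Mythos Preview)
Your proposal correctly handles the forward direction and the initial reductions, but the last two paragraphs are an outline with an explicitly acknowledged gap rather than a proof, and the plan you sketch (propagating ``local constraints'' from Lemma~\ref{min rank rep uni local} and Proposition~\ref{dyadic In+3} into $M_0$) is not the right mechanism. The argument is entirely global and hinges on Lemma~\ref{lem:criterion common}, which you do not invoke.

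Here is what you are missing. For each indecomposable unimodular $L_i$ of rank $\le j-1$, Lemma~\ref{lem:criterion common}(b) guarantees an element $\widetilde{L_i}\in\CS{K}{n}$ that represents $\lfloor n/\rank L_i\rfloor L_i$. A rank count (essentially Lemma~\ref{lem:criterion common}(c), using $\lfloor n/\rank L_i\rfloor\rank L_i > n/2$ together with $qj = n-r > n/2$) shows $\widetilde{L_i}$ cannot also represent $qJ$, so $\widetilde{L_i}\in\mathcal{S}'$ and hence $M$ represents it. Therefore the indecomposable decomposition of $M$ contains at least $\lfloor n/\rank L_i\rfloor$ copies of each $L_i$ and at least $q$ copies of $J$, so $M$ is split by
\[
qJ \;\mathbin{\perp}\; \bigperp_{L_i\in\overline{\mathcal{I}}(j-1)} \Big\lfloor\frac{n}{\rank L_i}\Big\rfloor L_i.
\]
The second summand represents every unimodular lattice on $I_{j-1}\otimes K$ (any such lattice decomposes into the $L_i$ with multiplicities bounded by $\lfloor(j-1)/\rank L_i\rfloor\le\lfloor n/\rank L_i\rfloor$), so by Lemma~\ref{uni I n+3} it is $(j-4)$-universal; since $r\le j-4$, it represents your $L''$. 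This gives $qJ\mathbin{\perp}L''\to M$ directly, with no local analysis and no need to compare $s$ with $q$.
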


\begin{proof}
Denote $\mathcal{S}' := \CS Kn \setminus \{ L \in \CS Kn \mid L \text{ represents } qJ \}$ and let $\Lambda$ be a lattice that represents all lattices in $\mathcal{S}' \cup \{ qJ \mathbin{\perp} R \}$. It suffices to show that $\Lambda$ represents all lattices in $\CS Kn$. Since $\Lambda$ readily represents all lattices in $\mathcal{S}'$, it is enough to show that $\Lambda$ represents all lattices of the form $qJ \mathbin{\perp} R'$ for some rank $r$ lattice $R'$.

Let $R'$ be any $\co$-lattice of rank $r$. Let $\overline{\mathcal{I}}(j-1) = \{ L_1, \dots, L_s \}$ be a set of complete representatives modulo isometry of indecomposable unimodular $\co$-lattices of rank at most $(j-1)$. Clearly $\overline{\mathcal{I}}(j-1)$ does not contain a copy of $J$. Hence by Lemma~\ref{lem:criterion common}, $\Lambda$ must represent the lattice
\[
 qJ \mathbin{\perp} \floor{\frac{n}{\rank L_1}}L_1 \mathbin{\perp} \dotsb \mathbin{\perp} \floor{\frac{n}{\rank L_s}}L_s\text.
\]
Since $\floor{\frac{n}{\rank L_1}}L_1 \mathbin{\perp} \dotsb \mathbin{\perp} \floor{\frac{n}{\rank L_s}}L_s$ represents all lattices in $\gen(I_{j-1})$, it is $(j-4)$-universal, and therefore represents $R'$. Hence, $\Lambda$ represents $qJ \mathbin{\perp} R'$, as required.
\end{proof}

\subsection{Proof of Theorem~\ref{thm:criterionset}}
Let $n \geq 2$ and let $C > 0$ be a fixed constant such that $K$ admits an $n$-universal criterion set $\CS{K}{n}$ which satisfies $\card{ \CS{K}{n} } < C$.

By Lemma~\ref{lem:criterion common}, for each indecomposable unimodular $\co$-lattice $L$ of rank at most $n$, there corresponds a lattice in $\CS{K}{n}$ that represents $\floor{\frac{n}{\rank L}}L$, and they are all distinct. This therefore implies that 
$$\card{ \CS{K}{n} } \ge \IUL{K}{1} + \dotsb + \IUL{K}{n} \ge \max \{ \IUL{K}{1}, \dots, \IUL{K}{n} \}.$$

Thus, by using the lower bounds for $\max \{ \IUL{K}{1}, \dots, \IUL{K}{n} \}$ proven above in Proposition~\ref{prop:maxIULbounds}, this implies that $d$ and $\Delta_K$ satisfy the same bounds as given in (\ref{eq:maindbound}) and (\ref{eq:maindiscbound}) in the case where $n \geq 3$, and satisfies the same bound given in (\ref{eq:maindiscbound2}) in the case $n = 2$.
\qed

\subsection{Proof of Theorem~\ref{thm:criterion nonunique}}
We first prove that there exists a positive integer $N \ge 6$ such that $m\ge N$ implies the existence of an indecomposable unimodular $\co$-lattice whose rank $j$ satisfies the condition $m \le j \le 2m-6$.
Since indecomposable unimodular $\Z$-lattices exist at rank $8$, $12$, and any rank at least $14$ (e.g. see~\cite{OMeara1975}), we may take $N = 7$ when $K = \Q$. By \cite[Corollary~4.2]{Wang}, if a unimodular $\Z$-lattice $L$ is indecomposable, then so is $\co$-lattice $L \otimes \co$ for any real quadratic field $K$. Hence, when $d=2$, then we may still take $N = 7$. When $d \ge 3$, then Theorem~\ref{thm:wrightasymp} guarantees the existence of such an $N$.

Fix an integer $N$ that satisfies the condition in the previous paragraph and let $n \ge 2N-5$.
Then there is an indecomposable unimodular lattice $J$ whose rank $j$ satisfies $\ceil{\frac n2} + 2 \le j \le n-1$.
Let $\CS Kn$ be an $n$-universal criterion set of the smallest cardinality.
If we find infinitely many $n$-universal criterion sets that have the same cardinality as $\CS Kn$, then we are done.

Note that $1 \le n-j \le \floor{\frac n2} - 2 \le j-4$.
Hence, by Proposition~\ref{pro:criterion replacement}, there exists exactly one lattice $L \in \CS Kn$ such that $L \cong J \mathbin{\perp} R$ for a lattice $R$ of rank $n-j$.
Now, by the same proposition, any set of the form $(\CS Kn \setminus \{L\}) \cup \{J \mathbin{\perp} R'\}$ for a lattice $R'$ of rank $n-j$ is also an $n$-universal criterion set of the same cardinality.
This proves the theorem.
\qed

\section{Further problems}

Going beyond the theorems we've proven, there are still a myriad of open problems regarding the behaviour of $\MRU{K}{n}$, both for small and large $n$.  In the hopes of encouraging further research on this topic, we thus pose the following open problems:  

\begin{itemize}
    \item Can one prove Theorem~\ref{thm:mainasymp} for the remaining $\numquadexceptions$ exceptional real quadratic fields $K$?

    \item For any fixed totally real field $K$, can we derive an explicit asymptotic for $\MRU{K}{n}$ itself (and not just $\log \MRU{K}{n}$) as $n \to \infty$?

    \item Can we derive analogous density bounds (in a similar spirit to Theorems~\ref{thm:finite} and \ref{thm:finite2}) for $\MRU{K}{1}$?

    \item Can $\MRU{K}{n+1} - \MRU{K}{n}$ be arbitrarily large for any fixed $n \geq 1$?

    \item Furthermore, given any constant $C > 0$ and $n \geq 1$, are there always only finitely many totally real fields $K$ such that $\MRU{K}{n+1} - \MRU{K}{n} < C$?  If so, can all such fields be effectively determined?

    \item For any fixed totally real field $K$, can we obtain explicit criterion sets $\CS{K}{n}$ for any $\co$-lattice to be $n$-universal?

    \item For any fixed totally real field $K$, can we explicitly compute all $n$ for which there exists a \emph{unique} minimal $n$-universal criterion set $\CS{K}{n}$?
\end{itemize}

\bibliographystyle{alpha}  
\bibliography{bib}

\end{document}